\newcommand{\ds}{\displaystyle}
\newcommand{\nm}{\noalign{\smallskip}}
\numberwithin{equation}{section}
\numberwithin{figure}{section}
\numberwithin{table}{section}
\newtheorem{thm}{Theorem}
\newtheorem{lem}{Lemma}
\newtheorem{prop}{Proposition}
\begin{document}

\title{Modeling active electrolocation in weakly electric fish\thanks{\footnotesize This work
was supported  by ERC Advanced Grant Project MULTIMOD--267184.}}

\author{Habib Ammari\thanks{\footnotesize Department of Mathematics and Applications,
Ecole Normale Sup\'erieure, 45 Rue d'Ulm, 75005 Paris, France
(habib.ammari@ens.fr, boulier@dma.ens.fr).} \and Thomas
Boulier\footnotemark[2] \and Josselin Garnier\thanks{\footnotesize
Laboratoire de Probabilit\'es et Mod\`eles Al\'eatoires \&
Laboratoire Jacques-Louis Lions, Universit\'e Paris VII, 75205
Paris Cedex 13, France (garnier@math.jussieu.fr).}}\maketitle

\begin{abstract}
In this paper, we provide a mathematical model for the
electrolocation in weakly electric fishes. We first investigate
the forward complex conductivity problem and derive the
approximate boundary conditions on the skin of the fish. Then we
provide a dipole approximation for small targets away from the
fish. Based on this approximation, we obtain a non-iterative
location search algorithm using multi-frequency measurements. We
present numerical experiments to illustrate the performance and
the stability of the proposed multi-frequency location search
algorithm. Finally, in the case of disk- and ellipse-shaped
targets, we provide a method to reconstruct separately the
conductivity, the permittivity, and the size of the targets from
multi-frequency measurements.
\end{abstract}

\noindent {\footnotesize {\bf AMS subject classifications.} 35R30,
35J05, 31B10, 35C20, 78A30}

\noindent {\footnotesize {\bf Key words.} multi-frequency MUSIC
algorithm, weakly electric fish, location search algorithm,
approximate boundary conditions}

\section{Introduction}

In the turbid rivers of Africa and South America, some species of
fish generate an electric current which is not enough for defense
purpose. In 1958, Lissmann and Machin discover that this electric
current is in fact used for spatial visualization
\cite{lissmann1958mechanism}. Indeed an object in the vicinity of
the fish will be detected by measurement of the electric field's
distortion on the skin. Behavioral experiments have shown that the
weakly electric fish is able to extract useful information about
targets, such as the location \cite{von1993electric}, the shape
\cite{von2007distance}, and the electric parameters (capacitance
and conductivity) \cite{von1999active}.

Mathematically speaking, this is an inverse problem for the
electric field created by the fish. Indeed, given the current
distribution over the skin,  the problem is to recover the
conductivity distribution in the surrounding space. Due to the
ill-posedness of this type of problems, it is very difficult to
recover as much information as the fish is able to. Thus,
modelling this ``electric sense'' (called active electrolocation)
is likely to give us insights in this regard.

Electrolocation has been quantitatively investigated since
Lissmann and Machin, who tried an analytical approach. More
precisely, they computed the distortion created by a cylinder
placed in the electric field of a dipole
\cite{lissmann1958mechanism}, and noticed that it is equivalent to
the field created by a dipole located inside the cylinder. In
1983, Bacher remarked that this formula cannot explain the phase
difference observed when the electric permittivity of the target
does not equal the permittivity of the water \cite{bacher1983new}.
This phase shift seems to be an important input for the fish since
it is measured by receptors (called Rapid Timing units
\cite{moller1995}), and thus will be the central point in this
paper. Rasnow in 1996 gathered these two previous results by
considering a time-harmonic and spatially uniform background
electric field. In the presence of a sphere with center at $0$ and
radius $a$, conductivity $\sigma_{1}$, and permittivity
$\varepsilon_{1}$, the uniform background electric field ${E}_{0}$
with frequency $\omega$ is modified by adding the following field:
\begin{equation} \label{formemp}
{E}_{0}\cdot {x} = \left(\frac{a}{|{
x}|}\right)^{3}\frac{(\sigma_{1}+i\omega\varepsilon_{1})-(\sigma_{0}
+i\omega\varepsilon_{0})}{2(\sigma_{1}+i\omega\varepsilon_{1})+(\sigma_{0}+i\omega\varepsilon_{0})},
\end{equation}
 where the index $0$ refers to the ambient medium.

Numerical approaches have also been driven since the 70's: in
1975, Heiligenberg proposed a finite differences scheme to
calculate the field created by the fish
\cite{heiligenberg1975theoretical}. In 1980, Hoshimiya et al. use
finite elements to solve this problem
\cite{hoshimiya1980theapteronotus}. The geometry of the fish is
simplified by an ellipse and is divided into two areas: the thin
skin with low conductivity and the interior of the body. Their aim
is to optimize conductivity values to approximate as better as
possible the experimentally measured field. The result is that the
optimal conductivity is non-uniform, being higher in the tail
region. Improvements of these models since then can be found in
\cite{babineau2006modeling,maciver2001computational,migliaro2005theoretical,rasnow1989simulation}
and references therein. However, the most promising technique is
the use of the boundary element method performed by Assad in the
90's in his PhD thesis \cite{assad1997electric}. Indeed, the
important feature is the electric potential on the skin (because
it is the input for the fish), so a boundary element method (BEM)
approach allows us to concentrate the equations on it. Moreover,
the computation speed is enhanced because the number of nodes is
dramatically reduced. The equation considered is here $\Delta u=0$
on the exterior of the body with Robin boundary conditions on the
skin \cite{assad1990hypercube}:
\begin{equation}
u-\xi\frac{\partial u}{\partial\nu}=\psi,\label{eq:assad_BC}
\end{equation}
 where $\psi$ is the potential inside the body and $\xi=h(\sigma_{0}/\sigma_{s})$
($h$ being the skin thickness and $\sigma_{s}$ (resp. $\sigma_{0}$)
the skin (resp. water) conductivity) is the \emph{effective skin thickness}.

Let us mention that there are other kinds of simulations, based on
a more empirical approach, determining an equivalent electric
circuit \cite{budelli2000electric,caputi1998electric} or an
equivalent multipole \cite{chen2005modeling}.

The aim of this paper is to derive a rigorous model for the
electrolocation of an object around the fish. Two problems arise:
the direct problem, \emph{i.e.}, the equations involved and their
boundary conditions, and the reconstruction itself. For the direct
complex conductivity problem, we show using layer potential
techniques the validity of (\ref{eq:assad_BC}). We also generalize
formula (\ref{formemp}) to the case of a non-uniform background
electric field, taking into account the distortion induced by the
body of the fish, and with any shape of the target. For the
inverse problem, little is known in the complex conductivity case
\cite{beretta}. Here, we take advantage of the smallness of the
targets to use the framework of small volume asymptotic expansions
for target location and characterization
\cite{ammari2004reconstruction, ammari2007polarization}. However,
since the electric current is generated by only one emitter at the
tail of the fish (the electric organ) and measured by many
receptors on the skin, standard non-iterative algorithms such as
MUSIC (standing for MUltiple Signal Classification) cannot be
applied for location search. In standard MUSIC, the data (called
multistatic response matrix) form a matrix and its singular value
decomposition leads to an efficient imaging function by projecting
the Green function of the medium onto the significant image space
\cite{park,ail,ammarinumer, bruhl2003direct, chambers, cheney,
kirsch}. Here, roughly speaking, one has only a column of the
response matrix. However, using the fact that the electric current
produced by the electric organ is periodically time dependent with
a known fundamental frequency, we extend MUSIC approach to
multi-frequency measurements by constructing an efficient and
robust multi-frequency MUSIC imaging function. We perform
numerical simulations in order to validate both the direct model
and the multi-frequency MUSIC algorithm. We also illustrate the
robustness with respect to measurement noise and the sensitivity
with respect to the number of frequencies, the number of sensors,
and the distance to the target of the location search algorithm.
Finally, in the case of disk- and ellipse-shaped targets, we
provide a method to reconstruct separately the conductivity, the
permittivity, and the size of the targets from multi-frequency
measurements. We mention that this is possible only because of
multi-frequency measurements which yield polarization tensors with
complex conductivities. It is well-known that polarization tensors
for real conductivities cannot separate the size from material
properties of the target \cite{ammari2007polarization}. We also
mention that the use of different values for the frequencies is
more crucial for the material and size reconstruction procedure
than for the location step. In fact, in the presence of
measurement noise, location with $N$ realizations with one
frequency is comparable to the one with $N$ different frequency
values.

The paper is organized as follows. In section
\ref{sec:forward_problem}, the model is set up and the equations
governing the electric field are rigorously derived. Using layer
potential techniques, the boundary condition (\ref{eq:assad_BC})
is recovered. In section \ref{sec:detection_algo}, a small target
is located using multi-frequency measurements. For this purpose, a
dipolar approximation is derived before the analysis of the
response matrix. Finally, numerical simulations are performed in
section \ref{sec:numeric}; due to the presence of a hyper-singular
operator, a particular attention is paid to the numerical scheme.
Reconstructions of the electromagnetic parameters and the size of
disk- and ellipse-shaped targets are also provided.

\section{The forward problem}

\label{sec:forward_problem}

The aim of this section is to formulate the \emph{forward
problem}. After the setup of the problem in subsection
\ref{sub:setup}, the boundary conditions are announced in
subsection \ref{sub:BC-announce} before being derived in
subsection \ref{sub:BC-derivation}. Existence, uniqueness and a
useful representation lemma for this derivation are proved in
subsection \ref{sub:existence-uniqueness}.

\subsection{Non-dimensionalization and problem formulation}

\label{sub:setup}

In this subsection, we derive the equations governing the electric
field. A formal explanation of the electroquasistatic (or EQS) formulation
is given, and the setup of the problem is then non-dimensionalized.

\subsubsection*{Partial differential equations of the problem}

The electroquasistatic (or EQS) formulation is a low-frequency
limit for the Maxwell system in three dimensions. In the frequency
domain, this latter is given by
\begin{equation}
\left\{ \begin{alignedat}{1}\nabla\cdot\varepsilon {E} & =\rho,\\
\nabla\cdot {B} & =0,\\
\nabla\times {E} & =-i\omega {B},\\
\nabla\times\frac{ {B}}{\mu} & = {j}+i\omega\varepsilon {E},
\end{alignedat}
\right.\label{eq:maxwell}
\end{equation}
where $E$ is the electric field, $B$ is the magnetic induction
field, $\rho$ and $j$ are the free charges and currents, $\omega$
is the frequency, $\mu$ is the magnetic permeability, and
$\varepsilon$ is the electric permittivity. Moreover, in a medium
of conductivity $\sigma$ the Ohm's law connects the electric field
to the induced current density (${j_{i}}=\sigma {E}$) so the
current density can be decomposed as:
\[
{j}=\sigma {E}+ {j_{s}},
\]
 where ${j_{s}}$ is a source of current (in our model, it
comes from the electric organ). Then, taking the divergence of the
last line in (\ref{eq:maxwell}), we have:
\begin{equation}
\nabla\cdot(\sigma+i\varepsilon\omega) {E}=-\nabla\cdot
{j_{s}}.\label{eq:div-maxwell4}
\end{equation}
The EQS approximation consists in considering the electric field
as irrotational because the magnetic field variation is
negligible. A sufficient condition for that is given by
\cite{vanRienen2001}:
\begin{equation}
\frac{L_{\rm max}}{\lambda_{\rm min}}\ll1,\label{eq:eqs_condition}
\end{equation}
 where $L_{\rm max}$ is the maximal length of the problem and $\lambda_{\rm min}$
the minimal wavelength. Here, we can take $L_{\rm max}=1$m because
the range of electrolocation does not exceed two body lengths
\cite{moller1995}. In the water, the minimal wavelength is given
by
\[
\lambda_{\rm min}=\frac{1}{\omega_{\rm max}\sqrt{\mu\varepsilon}},
\]
 where $\mu\approx\mu_{0}$, $\varepsilon\approx80\varepsilon_{0}$
and $\omega_{\rm max}$ is the maximal frequency emitted by the
fish, which is of the order of $10$kHz. Thus, the fraction in
(\ref{eq:eqs_condition}) is of order $10^{-4}$, so the EQS
approximation is very well suited for our situation.

Going back to the equation of the electric field
(\ref{eq:div-maxwell4}), we can now use the fact that ${E}$ is
irrotational to state that it is derived from a potential scalar
field $u$. This finally leads us to the following equation:
\begin{equation}
\nabla\cdot(\sigma+i\varepsilon\omega)\nabla u=-\nabla\cdot
{j_{s}}.\label{eq:EQS-PDE}
\end{equation}

To conclude, taking into account the slow variation of the
electric field leads us to consider a complex conductivity instead
of a real valued one. However, for the rest of this section, the
imaginary part of this conductivity will be neglected; indeed
measurements on a \emph{Gnathonemus petersii} showed that the
permittivity of the skin, the body, and the water are very small
compared to their respective
conductivity~\cite{caputi1998electric,scheich1973coding}. Thus,
this EQS approximation will be used only in the presence of a
target: it will be detected by the phase shift induced by its
complex conductivity.

\subsubsection*{Non-dimensionalization}

We wish to perform an asymptotic analysis of the equations. The
first step consists in the identification of the different scales
of the model problem. The electric potential $u$, the variables
$x$ and $\omega$, and the parameters $\sigma$ and ${j_{s}}$ can be
written as follows:
\[
u=V_{0}u',\;
x=Lx',\;\omega=\omega_{0}\omega',\;\sigma=\sigma_{0}k,\;
{j_{s}}=\frac{I_{0}}{L^{2}} {j_{s}'},
\]
 where $V_{0}$ is the voltage produced by an \emph{electric organ
discharge} (EOD), $L$ is the length of the fish, $\omega_{0}$ is
the fundamental frequency of the EOD, $\sigma_{0}$ is the
conductivity of the surrounding water and $I_{0}$ is the current
intensity inside the electric organ. Moreover, anticipating the
next subsection, the conductivity of the body and the skin play an
important role in the shape of the electric field. Thus, in the
list of parameters we add the conductivity of the body
$\sigma_{b}$, the thickness of the skin $h$ and its surface
conductivity $\Sigma$. The orders of magnitude of these parameters
are found in Table~\ref{tab:Orders-of-magnitude}.

\begin{table}[!h]
\centering%
\begin{tabular}{|c|c|c|}
\hline
Quantity  & Order of magnitude  & Reference\tabularnewline
\hline
\hline
$V_{0}$  & $10$ mV  & \cite{assad1998electric,stoddard1999electric}\tabularnewline
\hline
$L$  & $10$ cm  & \cite{moller1995}\tabularnewline
\hline
$\omega_{0}$  & $1$ kHz  & \cite{moller1995}\tabularnewline
\hline
$\sigma_{0}$  & $100$ $\mu$S$\cdot$cm$^{-1}$  & \cite{maciver2001prey}\tabularnewline
\hline
$I_{0}$  & $1$ mA  & \cite{bell1976electric}\tabularnewline
\hline
$\sigma_{b}$  & $1$ S$\cdot$m$^{-1}$  & \cite{scheich1973coding}\tabularnewline
\hline
$\Sigma$  & $100$ $\mu$S$\cdot$cm$^{-2}$  & \cite{caputi1998electric}\tabularnewline
\hline
$h$  & $100$ $\mu$m  & \cite{zakon1986electroreceptive}\tabularnewline
\hline
\end{tabular}

\caption{\label{tab:Orders-of-magnitude}Orders of magnitude of the
physical quantities involved. These are only scales and not the
exact values measured in the cited references. Here $S$ is Siemens
($1S =1A/1V$).}
\end{table}

These $n=8$ quantities involve $r=4$ fundamental units of the SI
system, so according to the Buckingham-Pi theorem, we need $n-r=4$
nondimensional quantities. The first one can be found by rewriting
the equation (\ref{eq:EQS-PDE}) in terms of the nondimensional
quantities ($x^\prime, k, u', j_{s}^\prime$):
\begin{equation}
\nabla_{x'}\cdot
k\nabla_{x'}u'=-\frac{I_{0}}{\sigma_{0}V_{0}L}\nabla\cdot
{j_{s}'}.\label{eq:nondimensionalized-EQS}
\end{equation}

The multiplicative term in the right-hand side of the previous
equation is not important as the equation is linear. The three
other nondimensional quantities come from the parameters of the
skin and the body of the fish:
\[
k_{b}:=\frac{\sigma_{b}}{\sigma_{0}}\sim10^{2},\; k_{s}:=\frac{h\Sigma}{\sigma_{0}}\sim10^{-2},\;\delta:=\frac{h}{L}\sim10^{-3}.
\]
 In other words, in nondimensional units, $k_{b}$ (resp. $k_{s}$)
is the body (resp. skin) conductivity and $\delta$ is the skin thickness.

To conclude, omitting the prime symbol for the sake of clarity and
denoting by $k_b f$ the source term in equation
(\ref{eq:nondimensionalized-EQS}), the governing PDE is the
following
\begin{equation}
\nabla\cdot k\nabla u= k_b f,\label{eq:EQS-final}
\end{equation}
 where $k$ is piecewise constant, being equal to $1$ in the water,
$k_{b}$ inside the body of the fish and $k_{s}$ in the skin. These
domains are going to be made precise in the next subsection.

For the sake of simplicity, from now on, we only consider  the
model equations in two dimensions.

\subsection{Boundary conditions}

In this subsection, we derive the appropriate boundary conditions
associated with the presence of a very thin and very resistive
skin. Robin boundary conditions will be found after an asymptotic
analysis of the layer potentials involved.

The setup is as follows: the body occupies a fixed smooth open set
$\Omega_{b}$ and the skin with constant thickness is described as:
\[
\Omega_{s}:=\big\{ x +t\nu(x),\,
x\in\partial\Omega_{b},\,0<t<\delta\big\},
\]
 where $\nu$ is the outward normal unit vector. Let us also denote
by $\xi$ the effective thickness defined by Assad
\cite{assad1990hypercube}; in our variables it is given by
\[
\xi:=\frac{\delta}{k_{s}}.
\]
 The source of the electric field is a sum of Dirac functions:
\[
f= \sum_{j=1}^{m}\alpha_{j}\delta_{z_{j}},
\]
 where, for $1\leq j\leq m$, $z_{j}\in\Omega_{b}$ and $f$ satisfies
 the charge neutrality condition
\begin{equation} \label{neutre}
\ds \sum_{j=1}^m \alpha_{j}=0.
\end{equation}
Although condition (\ref{neutre}) is the physical condition in our
model, we will show how to modify the derivations and the results
of the paper in the general case. An illustration is given in
Figure~\ref{fig:Setup}.
\begin{figure}
\centering\includegraphics[width=10cm]{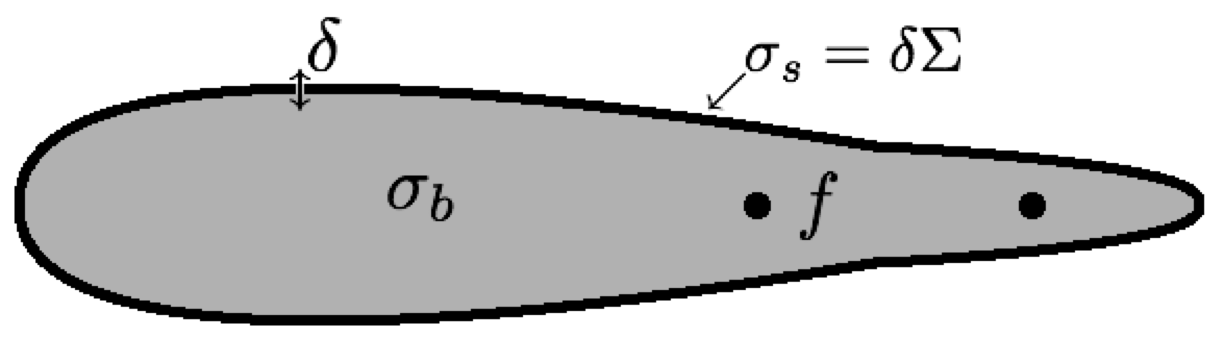} \caption{Setup of
the problem. The conductivities are non-dimensionalized so that
$\sigma_{0}=1$. The body $\Omega_{b}$ is represented in grey and
the skin $\Omega_{s}$ is represented by its bold boundary. The
sources $f$ are given by the two dots. \label{fig:Setup}}
\end{figure}

\label{sub:BC-announce}

Our main purpose here is to investigate the behavior of the
solution of (\ref{eq:EQS-final}) with
\begin{equation} \label{defk}
k(x)=\left\{\begin{array}{l} k_s \quad \mbox{if } x \in \Omega_s,\\
\nm k_b \quad \mbox{if } x \in \Omega_b,\\
\nm 1 \quad \mbox{otherwise},
\end{array}
\right.
\end{equation}
where $k_s \neq 1$ and $k_b \neq k_s$, in the following asymptotic
regime:
\[
k_s = \frac{\delta}{\xi}, \quad \xi \mbox{ is fixed}, \quad
\delta\rightarrow0, \mbox{ and }k_{b}\rightarrow\infty.
\]
 In order to make this dependence clear, let us denote such a solution by $u_{\delta,k_{b}}$. Adding a far field condition (essential for uniqueness, see
subsection \ref{sub:existence-uniqueness}), it is the solution of
\begin{equation}
\left\{ \begin{alignedat}{2}\nabla\cdot k\nabla u_{\delta,k_{b}} & = k_b f, & \,\, x\in\mathbb{R}^{2},\\
\left|u_{\delta,k_{b}} \right| & = {O}(\left|x\right|^{-1}), &
\,\,\left|x\right|\rightarrow\infty \text{ uniformly in }\hat{x},
\end{alignedat}
\right.\label{eq:governing_equation}
\end{equation}
 where $\hat{x}:=x/\left|x\right|$ and $k(x)$ is given by
 (\ref{defk}). Note that if assumption (\ref{neutre}) does not hold, then the far
field condition should be replaced with
\begin{equation} \label{eq:governing_equation_rad}
\left|u_{\delta,k_{b}} - (\sum_{j=1}^m \alpha_j)  \frac{(\lambda_b
+ 1/2) (\lambda_s + 1/2)}{2 \pi (\lambda_b -1/2) (\lambda_s -
1/2)} \log |x| \right|  = {O}(\left|x\right|^{-1}), \quad
\,\,\left|x\right|\rightarrow\infty \text{ uniformly in }\hat{x},
\end{equation}
where the parameters $\lambda_{s}$ and $\lambda_{b}$ are given by
\begin{equation} \label{deflambdas}
\lambda_{s}:=\frac{k_{s}+1}{2(k_{s}-1)}\mbox{ and
}\lambda_{b}:=\frac{k_{s}+k_{b}}{2(k_{s}-k_{b})}.
\end{equation}

The far field condition (\ref{eq:governing_equation_rad}) will be
explained later. We will compute the first-order asymptotic
$u_{0,\infty}$ and see that it is the solution of the following
system:
\begin{equation}
\left\{ \begin{alignedat}{2}\Delta u_{0,\infty} & ={f}, & \,\, x\in\Omega_{b},\\
\Delta u_{0,\infty} & =0, & \,\, x\in\mathbb{R}^{2}\setminus\overline{\Omega}_{b},\\
\left.u_{0,\infty}\right|_{+}-\left.u_{0,\infty}\right|_{-} & =\xi\left.\frac{\partial u_{0,\infty}}{\partial\nu}\right|_{+}, & \,\, x\in\partial\Omega_{b},\\
\left.\frac{\partial u_{0,\infty}}{\partial\nu}\right|_{-} & =0, & \,\, x\in \partial \Omega_{b},\\
\left|u_{0,\infty}\right| & = {O}(\left|x\right|^{-1}), &
\,\,\left|x\right|\rightarrow\infty,\text{ uniformly in }\hat{x}.
\end{alignedat}
\right.\label{eq:asymptotic_equation}
\end{equation}
Note that in the limiting model (\ref{eq:asymptotic_equation}),
the role of $f$ is to fix the potential $ u_{0,\infty} \big|_{-}$
on $\partial \Omega_{b}$. On the other hand, if assumption
(\ref{neutre}) does not hold, then the boundary condition on
$\frac{\partial u_{0,\infty}}{\partial\nu}\big|_{-}$ should be
replaced with
$$
\frac{\partial u_{0,\infty}}{\partial\nu}\big|_{-} =
\frac{1}{|\partial \Omega_b|} \sum_{j=1}^m \alpha_j.
$$

To be more precise, we will prove the following theorem:
\begin{thm} \label{thm:main-result}There exists a constant $C$
independent of $\delta$ and $k_{b}$ such that the following
inequality holds for $\delta$ and $1/k_{b}$ small enough:
\begin{equation}
\left\Vert u_{\delta,k_{b}}-u_{0,\infty}\right\Vert
_{L^{\infty}(\mathbb{R}^{2})}\leq
C\left(\delta+\frac{1}{k_{b}}\right),\label{eq:estimate-general}
\end{equation}
 where $u_{\delta,k_{b}}$ and $u_{0,\infty}$ are the solutions of (\ref{eq:governing_equation})
and (\ref{eq:asymptotic_equation}), respectively.
\end{thm}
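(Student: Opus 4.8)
The plan is to recast both (\ref{eq:governing_equation}) and (\ref{eq:asymptotic_equation}) as systems of boundary integral equations and then to compare the resulting layer densities. Write $G(x)=\frac{1}{2\pi}\log|x|$, set $F:=\sum_{j=1}^m\alpha_j G(\cdot-z_j)$ so that $\Delta F=f$, and denote by $\Gamma_\delta:=\{x+\delta\nu(x):x\in\partial\Omega_b\}$ the outer face of the skin. Using the representation lemma of subsection \ref{sub:existence-uniqueness}, I would look for $u_{\delta,k_b}=F+\mathcal S_{\partial\Omega_b}[\phi_b]+\mathcal S_{\Gamma_\delta}[\phi_\delta]$, where $\mathcal S_\Gamma$ is the single-layer potential on $\Gamma$. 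Such a $u$ is automatically continuous across both interfaces and, once the gauge $\int_{\partial\Omega_b}\phi_b+\int_{\Gamma_\delta}\phi_\delta=0$ is imposed, decays like $O(|x|^{-1})$. Enforcing continuity of the flux $k\partial_\nu u$ across $\Gamma_\delta$ and across $\partial\Omega_b$, and using the jump relations $\partial_\nu\mathcal S_\Gamma[\phi]\big|_\pm=(\mp\tfrac12 I+\mathcal K_\Gamma^*)[\phi]$, turns (\ref{eq:governing_equation}) into a $2\times2$ system for $(\phi_b,\phi_\delta)$ whose diagonal blocks are $-\lambda_b I+\mathcal K_{\partial\Omega_b}^*$ and $\lambda_s I+\mathcal K_{\Gamma_\delta}^*$ (with $\lambda_b,\lambda_s$ as in (\ref{deflambdas})) and whose off-diagonal blocks are the cross normal-derivative operators $\partial_\nu\mathcal S_{\Gamma_\delta}\big|_{\partial\Omega_b}$ and $\partial_\nu\mathcal S_{\partial\Omega_b}\big|_{\Gamma_\delta}$.

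The first computation is the scalar asymptotics of the contrasts. As $k_b\to\infty$ one has $\lambda_b=-\tfrac12+O(1/k_b)$, and since $k_s=\delta/\xi$ one has the crucial expansion $\lambda_s=-\tfrac12-\frac{\delta}{\xi}+O(\delta^2)$. The two leading terms $-\tfrac12$ are the degenerate, spectral-endpoint parts; the corrections $O(1/k_b)$ and $-\delta/\xi$ are precisely what will generate, respectively, the interior no-flux condition $\partial_\nu u_{0,\infty}\big|_-=0$ and the effective Robin jump $u_{0,\infty}\big|_+-u_{0,\infty}\big|_-=\xi\,\partial_\nu u_{0,\infty}\big|_+$ of (\ref{eq:asymptotic_equation}). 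In parallel I would expand the operators as the skin collapses: parametrizing $\Gamma_\delta$ over $\partial\Omega_b$ by $x\mapsto x+\delta\nu(x)$ and Taylor-expanding the (weakly singular, since $\partial\Omega_b$ is smooth) kernels gives $\mathcal K_{\Gamma_\delta}^*=\mathcal K_{\partial\Omega_b}^*+O(\delta)$ and $\partial_\nu\mathcal S_{\Gamma_\delta}\big|_{\partial\Omega_b}=\pm\tfrac12 I+\mathcal K_{\partial\Omega_b}^*+O(\delta)$ (and symmetrically), all in operator norm on $L^2(\partial\Omega_b)$. Thus in the limit both interfaces merge onto $\partial\Omega_b$ and the $2\times2$ system collapses to a single integral equation there.

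I would then identify this limiting equation with the integral formulation of (\ref{eq:asymptotic_equation}). The $k_b\to\infty$ block forces $\partial_\nu u\big|_-=0$ on $\partial\Omega_b$, so the interior part reduces to the Neumann problem $\Delta u=f$ in $\Omega_b$, $\partial_\nu u\big|_-=0$, which is solvable exactly because $\int_{\partial\Omega_b}\partial_\nu F=\int_{\Omega_b}f=\sum_{j=1}^m\alpha_j=0$ by the neutrality condition (\ref{neutre}); the $\delta\to0$ block, carrying the $-\delta/\xi$ correction, yields the exterior Robin problem equivalent to Assad's condition (\ref{eq:assad_BC}). The delicate structural point is that the diagonal operators $\pm\tfrac12 I+\mathcal K_{\partial\Omega_b}^*$ are not invertible on all of $L^2(\partial\Omega_b)$, since $-\tfrac12$ sits at the endpoint of the Neumann--Poincar\'e spectrum with kernel and cokernel spanned by the constants and the equilibrium density. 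One must therefore work on the mean-zero subspace, which is exactly where (\ref{neutre}) places the data, verify that the limiting operator is boundedly invertible there, and then deduce that the full $\delta,1/k_b$-dependent operator is uniformly invertible for $\delta$ and $1/k_b$ small; here I would invoke the compactness of $\mathcal K^*$ on the smooth curve and the standard symmetrization that confines its spectrum to $(-\tfrac12,\tfrac12]$.

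With uniform invertibility secured, subtracting the two integral systems and inserting the $O(\delta+1/k_b)$ bounds on the coefficient perturbation, the operator perturbation and the right-hand side gives control of the density differences in $L^2$, of order $\delta+1/k_b$. Since single-layer potentials of $L^2$ densities on a smooth curve are bounded and decay at infinity, mapping the densities back --- and separately bounding the solution in the thin collar between $\partial\Omega_b$ and $\Gamma_\delta$, whose width is $O(\delta)$ --- upgrades the estimate to the $L^\infty(\mathbb{R}^2)$ bound (\ref{eq:estimate-general}). I expect the main obstacle to be the two coupled singular limits: the interfaces collapse onto one another while both contrasts run to the non-invertible endpoint $-\tfrac12$. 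Concretely, the cross operator $\partial_\nu\mathcal S_{\Gamma_\delta}\big|_{\partial\Omega_b}$ has to be expanded uniformly up to a diagonal only $O(\delta)$ away, where the logarithmic kernel is nearly singular, and the competition between the vanishing skin width $\delta$ and the vanishing skin conductivity $k_s=\delta/\xi$ must be shown to balance exactly, reproducing the finite effective thickness $\xi$ with no residual divergence, so that every error term is genuinely $O(\delta+1/k_b)$ with a constant $C$ depending only on $\Omega_b$, $\xi$ and the sources.
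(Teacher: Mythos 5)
Your setup coincides with the paper's (the same two--single-layer representation and the $2\times2$ system (\ref{eq:definition_phis_phib}), the same contrast expansions $\lambda_s=-\tfrac12-\delta/\xi+O(\delta^2)$, $\lambda_b=-\tfrac12+O(1/k_b)$), but the core of your argument --- ``the $2\times2$ system collapses to a single second-kind integral equation on $\partial\Omega_b$, the limiting operator is boundedly invertible on the mean-zero subspace, hence the full operator is uniformly invertible and the density differences are $O(\delta+1/k_b)$'' --- has a genuine gap, and it is fatal as stated. The limit solution $u_{0,\infty}$ has a \emph{nonzero jump} $u|_+-u|_-=\xi\,\partial_\nu u|_+$ across $\Gamma_b$, and a sum of two single layers on merging curves with uniformly bounded densities produces a continuous potential in the limit; so the densities cannot stay bounded. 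Indeed the paper's proof shows $\varphi_s,\varphi_b$ blow up like $1/\delta$ with leading parts cancelling pairwise, $\varphi_{s}^{(-1)}+\varphi_{b}^{(-1)}=0$ (equation (\ref{eq:phis+phib_ordre-1})), so that the pair of collapsing single layers converges to a \emph{double layer}: this is why the limiting representation in Lemma~\ref{lem:decomposition_lemma_asymptotic} is $H-\tfrac1\xi\mathcal{S}_b\varphi+\mathcal{D}_b\varphi$ and the effective integral equation (\ref{eq:decomposition_formula_asymptotic2}) contains the \emph{hypersingular} operator $\partial\mathcal{D}_b/\partial\nu$. That term is generated by the $O(\delta)$ operator corrections $\mathcal{K}_b^{(1)},\mathcal{R}_b,\mathcal{L}_b$ of Proposition~\ref{pro:DL_operators} (which contain $\partial\mathcal{D}_b/\partial\nu$ and $d^2\mathcal{S}_b/dt^2$) acting on the $O(1/\delta)$ densities, together with the $-\delta/\xi$ part of $\lambda_s$ acting on them; the ``exact balance'' you flag at the end is real, but it manifests precisely through this $1/\delta$ scaling, which means the inverse of the $\delta$-dependent system is \emph{not} uniformly bounded and your regular-perturbation scheme (limiting inverse $+$ small perturbation) cannot close. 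The paper handles this with a two-scale ansatz $\varphi=\delta^{-1}\varphi^{(-1)}+\varphi^{(0)}+\delta\varphi^{(1)}+\cdots$ and term-by-term matching, and then separately verifies that the limits $\delta\to0$ and $k_b\to\infty$ commute.

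There is also a concrete technical error supporting the structural one: the claim that $\mathcal{K}^*_{\Gamma_\delta}=\mathcal{K}^*_{\partial\Omega_b}+O(\delta)$ and $\partial_\nu\mathcal{S}_{\Gamma_\delta}\big|_{\partial\Omega_b}=\pm\tfrac12 I+\mathcal{K}^*_{\partial\Omega_b}+O(\delta)$ \emph{in operator norm on} $L^2(\partial\Omega_b)$ is false. On concentric circles these operators are Fourier multipliers, and the symbol difference behaves like $\tfrac12\bigl(1-(1+\delta)^{-|n|}\bigr)$, which is $O(1)$ uniformly in the mode $n$ (take $n\sim1/\delta$); the expansions hold only on smoother spaces such as $\mathcal{C}^{1,\eta}$, with first-order coefficient operators that are unbounded on $L^2$ --- which is exactly why the Neumann-series uniform-invertibility step fails and why the paper works in H\"older spaces with the explicit first-order corrections. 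Finally, even granting density estimates, your last step (mapping $L^2$ density bounds through the layer potentials plus a collar estimate) differs from the paper's cleaner route: the paper first proves the estimates on the two layers $\Gamma_b,\Gamma_s$ and then propagates them to all of $\mathbb{R}^2$ by the maximum principle, using the far-field conditions to handle the unbounded exterior on $B(0,R_0)^c$. To repair your proposal you would need to abandon uniform invertibility, insert the singular $\delta^{-1}$ ansatz, and derive the limiting hypersingular equation (\ref{eq:definition_phis_-1}); at that point you have essentially reconstructed the paper's proof.
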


In a first part, we will analyze
equation~(\ref{eq:governing_equation}) and show that there exists
a unique solution that can be represented as the sum of a harmonic
function and two single-layer potentials. In a second part, we
will perform asymptotic analysis of these layer potentials in
order to show that the limiting function is solution
of~(\ref{eq:asymptotic_equation}). This latter part is to adapt
the work done by Zribi in his thesis~\cite{zribilayer} and by
Zribi and Khelifi in \cite{khelifizribi}.

\subsubsection{Existence, uniqueness, and representation of the electric potential}

\label{sub:existence-uniqueness}

In this part, we will first prove  the uniqueness of the solutions
of~(\ref{eq:governing_equation}) and then we will derive a
representation formula, which will give us the existence of the
solution. For the moment, $\delta$ and $k_{b}$ are fixed, but we
suppose that:
\begin{equation}
k_{s} < 1 < k_{b}.\label{eq:ordering_conductivities}
\end{equation}

\subsubsection*{Uniqueness}

The uniqueness comes from the second line
of~(\ref{eq:governing_equation}) \cite{ammari2007polarization}.
Indeed, let $v=u_1-u_2$, where $u_1$ and $u_2$ are two solutions
of (\ref{eq:governing_equation}) and let us show that $v=0$.
From~(\ref{eq:ordering_conductivities}) we get, for $R$
sufficiently large (so that the ball with center $0$ and radius
$R$ encompasses $\Omega_s$):
\begin{align*}
\int_{\left|x\right|<R}\left|\nabla v\right|^{2}
\leq\frac{1}{k_{s}} \int_{\left|x\right|<R} k(x) \left|\nabla
v\right|^{2} =
 \frac{1}{k_{s}}\int_{\left|x\right|=R}v\frac{\partial v}{\partial\nu}=
 -\frac{1}{k_{s}}\int_{\left|x\right|>R}\left|\nabla
v\right|^{2}\leq0.
\end{align*}
Here we have used the fact that  $\nabla v \in
L^2(\mathbb{R}^{2}\setminus \overline{\Omega}_s)$, which holds as
a consequence of the far field condition.
 A unique continuation argument shows that
$\left|\nabla v\right|^{2}=0$ in $\mathbb{R}^{2}$ and thus $v$ is
constant. Then, using the fact that $v\rightarrow0$ as
$|x|\rightarrow \infty$, we have $v=0$.

\subsubsection*{Existence and representation}

The existence is given by a representation formula decomposing the
solution into a source part and a refraction part. This refraction
part implies layer potentials on the boundaries of the body and
the skin. Let us define them explicitly and give some well-known
results. First, let us define the following boundaries:
\[
\Gamma_{b}:=\partial\Omega_{b}\mbox{ and }\Gamma_{s}:=\partial\Omega_{s}\setminus\Gamma_{b}.
\]
 In the following, the index $\beta$ stands for the subscript
$b$ or $s$. The single- and double-layer potentials on
$\Gamma_{\beta}$ are operators that map any $\varphi\in
L^{2}(\Gamma_{\beta})$ to $\mathcal{S}_{\beta} \varphi$ and
$\mathcal{D}_{\beta} \varphi$, respectively, where
\[
\begin{aligned}\begin{aligned}\mathcal{S}_{\beta} := \mathcal{S}_{\Gamma_\beta} \mbox{ with } &  \mathcal{S}_\Gamma \varphi:=\int_{\Gamma}G(\cdot-s)\varphi(s)ds,\\
\mathcal{D}_{\beta} := \mathcal{D}_{\Gamma_\beta} \mbox{ with }  &
\mathcal{D}_{\Gamma}\varphi
 :=\int_{\Gamma}\frac{\partial
G}{\partial\nu_{s}}(\cdot-s)\varphi(s)ds,
\end{aligned}
\end{aligned}
\]
 where $G$ is the Green function for the Laplacian in
 $\mathbb{R}^{2}$:
 \begin{equation} \label{defG}
 G(x) := \frac{1}{2\pi} \log |x|, \quad x \neq 0.
 \end{equation}
For $\varphi\in L^{2}(\Gamma_{\beta})$, the functions
$\mathcal{S}_{\beta}\varphi$ and $\mathcal{D}_{\beta}\varphi$ are
harmonic functions in $\mathbb{R}^{2}\setminus\Gamma_{\beta}$;
their singularities hold on $\Gamma_{\beta}$. To describe these
singularities, we define, for a function $w$ defined in
$\mathbb{R}^{2}\setminus\Gamma_{\beta}$ and $x\in\Gamma_{\beta}$:
\[
\begin{aligned}\left.w(x)\right|_{\pm} & :=\lim_{t\rightarrow0}w(x\pm t\nu(x)),\\
\left.\frac{\partial w}{\partial\nu}(x)\right|_{\pm} & :=\lim_{t\rightarrow0}
 \nabla w(x\pm t\nu(x)) \cdot \nu(x) .
\end{aligned}
\]
 Across the boundary~$\Gamma_{\beta}$, the following  trace relations
hold~\cite{ammari2007polarization}:
\begin{equation}
\begin{aligned}\left.\mathcal{S}_{\beta}\varphi\right|_{+} & =\left.\mathcal{S}_{\beta}\varphi\right|_{-},\\
\left.\frac{\partial\mathcal{S}_{\beta}\varphi}{\partial\nu}\right|_{\pm} & =\left(\pm\frac{1}{2}I+\mathcal{K}_{\beta}^{*}\right)\varphi,\\
\left.\mathcal{D}_{\beta}\varphi\right|_{\pm} & =\left(\mp\frac{1}{2}I+\mathcal{K}_{\beta}\right)\varphi,\\
\left.\frac{\partial\mathcal{D}_{\beta}\varphi}{\partial\nu}\right|_{+} & =\left.\frac{\partial\mathcal{D}_{\beta}\varphi}{\partial\nu}\right|_{-}.
\end{aligned}
\label{eq:jump_formulas}
\end{equation}
 Here, the operator $\mathcal{K}_{\beta}$ and its $L^{2}$-adjoint
$\mathcal{K}_{\beta}^{*}$ are given by
\[
\begin{aligned}(\mathcal{K}_{\beta}\varphi)(x) & :=\frac{1}{2\pi}{\rm p.v.} \int_{\Gamma_{\beta}}\frac{
(s-x)\cdot \nu(s)}{\left|x-s\right|^{2}}\varphi(s)ds & ,\,\, x\in\Gamma_{\beta},\\
(\mathcal{K}_{\beta}^{*}\varphi)(x) & :=\frac{1}{2\pi }{\rm p.v.}
\int_{\Gamma_{\beta}}\frac{ (x-s)\cdot \nu(x)
}{\left|x-s\right|^{2}}\varphi(s)ds & ,\,\, x\in\Gamma_{\beta},
\end{aligned}
\]
 where ${\rm p.v.}$ stands for
the Cauchy principal value. From (\ref{eq:jump_formulas}) it
follows that the following jump formulas hold:
$$
\left.\frac{\partial\mathcal{S}_{\beta}\varphi}{\partial\nu}\right|_{+}
-
\left.\frac{\partial\mathcal{S}_{\beta}\varphi}{\partial\nu}\right|_{-}
= \varphi \quad \mbox{and} \quad
\left.\mathcal{D}_{\beta}\varphi\right|_{+} -
\left.\mathcal{D}_{\beta}\varphi\right|_{-} = - \varphi.
$$
The following invertibility result is useful
\cite{escauriaza1992,verchota1984}.
\begin{thm} \label{thm:invertibility_lambda_K}Suppose that
$\Gamma_{\beta}$ has Lipschitz regularity. Then the operator
$\lambda I-\mathcal{K}_{\beta}^{*}$ is invertible on
$L_{0}^{2}(\Gamma_{\beta}):=\{ \varphi \in L^2(\Gamma_{\beta}) :
\int_{\Gamma_\beta} \varphi =0\}$ if
$\left|\lambda\right|\geq1/2$, and for $\lambda\in ( -\infty,
1/2]\cup (1/2,+\infty)$, $\lambda I-\mathcal{K}_{\beta}^{*}$ is
invertible on $L^{2}(\Gamma_{\beta})$.
\end{thm}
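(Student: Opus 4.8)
The plan is to treat Theorem~\ref{thm:invertibility_lambda_K} as a standard result in the $L^2$ theory of layer potentials on Lipschitz curves, following \cite{verchota1984,escauriaza1992}. The essential feature---and the source of all the difficulty---is that $\Gamma_\beta$ is only Lipschitz, so $\mathcal{K}_\beta^*$ is \emph{not} compact and the classical Riesz--Fredholm argument (compactness of $\mathcal{K}_\beta^*$ plus the Fredholm alternative) that settles the $C^{1,\alpha}$ case is unavailable. Two harder inputs replace it. First I would record that $\mathcal{K}_\beta^*$, whose kernel is the principal-value double-layer kernel $\frac{(x-s)\cdot\nu(x)}{2\pi|x-s|^2}$, is a Calder\'on--Zygmund operator tied to the Cauchy integral on the Lipschitz graph $\Gamma_\beta$; its $L^2(\Gamma_\beta)$-boundedness is the Coifman--McIntosh--Meyer theorem and is taken as known.

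The second, and genuinely hard, input is a Rellich (Rellich--Ne\v{c}as--Payne--Weinberger) energy identity, which I expect to be the main obstacle since it is where the Lipschitz geometry is absorbed and the coercivity constant is produced without any compactness. Applying it to $u=\mathcal{S}_\beta\varphi$ with a smooth vector field $\theta$ satisfying $\theta\cdot\nu\geq c>0$ a.e.\ on $\Gamma_\beta$ (such a field exists exactly because $\Omega_\beta$ is Lipschitz), and integrating $\mathrm{div}\big(\theta|\nabla u|^2-2(\theta\cdot\nabla u)\nabla u\big)$ over the interior and exterior of $\Omega_\beta$ separately, one splits $\nabla u$ into normal and tangential parts on $\Gamma_\beta$ and arrives at the comparability
\[
\big\|\nabla_T\mathcal{S}_\beta\varphi\big\|_{L^2(\Gamma_\beta)}\approx\Big\|\tfrac{\partial\mathcal{S}_\beta\varphi}{\partial\nu}\Big|_{+}\Big\|_{L^2(\Gamma_\beta)}\approx\Big\|\tfrac{\partial\mathcal{S}_\beta\varphi}{\partial\nu}\Big|_{-}\Big\|_{L^2(\Gamma_\beta)},
\]
the solid terms created by $\mathrm{div}\,\theta$ being absorbed by interior gradient estimates and $\nabla_T u$ being continuous across $\Gamma_\beta$ because $\mathcal{S}_\beta\varphi$ is. Feeding in the jump relations $\frac{\partial\mathcal{S}_\beta\varphi}{\partial\nu}\big|_{\pm}=(\pm\frac12 I+\mathcal{K}_\beta^*)\varphi$ and writing $\varphi=(\frac12 I+\mathcal{K}_\beta^*)\varphi-(-\frac12 I+\mathcal{K}_\beta^*)\varphi$, these equivalences promote to the bound-below estimates
\[
\big\|(\tfrac12 I+\mathcal{K}_\beta^*)\varphi\big\|_{L^2}\geq c\|\varphi\|_{L^2},\qquad \big\|(-\tfrac12 I+\mathcal{K}_\beta^*)\varphi\big\|_{L^2}\geq c\|\varphi\|_{L^2}\ \text{ on } L^2_0(\Gamma_\beta),
\]
so that $\frac12 I+\mathcal{K}_\beta^*$ and $-\frac12 I+\mathcal{K}_\beta^*$ are injective with closed range on $L^2(\Gamma_\beta)$ and $L^2_0(\Gamma_\beta)$ respectively.

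Finally I would convert the coercivity into invertibility and separate the two function spaces. Running the same estimate for the $L^2$-adjoints $\pm\frac12 I+\mathcal{K}_\beta$ supplies dense range, and an operator bounded below with dense range is invertible; thus $\frac12 I+\mathcal{K}_\beta^*$ is invertible on all of $L^2(\Gamma_\beta)$, i.e.\ $\lambda I-\mathcal{K}_\beta^*$ is invertible on $L^2$ at $\lambda=-\frac12$, while $-\frac12 I+\mathcal{K}_\beta^*$ is invertible on $L^2_0(\Gamma_\beta)$, i.e.\ $\lambda I-\mathcal{K}_\beta^*$ is invertible on $L^2_0$ at $\lambda=\frac12$. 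The failure on the full space at $\lambda=\frac12$ is explicit: from Green's formula $\mathcal{K}_\beta[1]=\frac12$, so $1$ spans $\ker(\frac12 I-\mathcal{K}_\beta)$ and, dually, the equilibrium density $\phi_0$ spans $\ker(\frac12 I-\mathcal{K}_\beta^*)$; since $\int_{\Gamma_\beta}\phi_0\neq0$ this kernel disappears on $L^2_0$, and the identity $\int_{\Gamma_\beta}\mathcal{K}_\beta^*\varphi=\frac12\int_{\Gamma_\beta}\varphi$ shows $\frac12 I-\mathcal{K}_\beta^*$ already maps $L^2$ into $L^2_0$ and restricts to an isomorphism there. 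For the open range $|\lambda|>\frac12$ I would invoke Plemelj's symmetrization: the identity $\mathcal{S}_\beta\mathcal{K}_\beta^*=\mathcal{K}_\beta\mathcal{S}_\beta$ together with the definiteness of the energy form $\varphi\mapsto-\langle\mathcal{S}_\beta\varphi,\varphi\rangle$ on $L^2_0(\Gamma_\beta)$ renders $\mathcal{K}_\beta^*$ self-adjoint for the induced energy inner product, so its spectrum is real and, by the bounds above, confined to $[-\tfrac12,\tfrac12]$ with the endpoint behavior just analyzed. Collecting the cases yields invertibility of $\lambda I-\mathcal{K}_\beta^*$ on $L^2_0(\Gamma_\beta)$ for all $|\lambda|\geq\frac12$ and on $L^2(\Gamma_\beta)$ for $\lambda\in(-\infty,-\frac12]\cup(\frac12,+\infty)$---so that on the full space $\lambda=\frac12$ must be excluded while $\lambda=-\frac12$ is retained.
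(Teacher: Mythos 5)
Your proposal is correct and follows essentially the route the paper itself relies on: the paper states this theorem without proof, citing \cite{escauriaza1992,verchota1984}, and your sketch --- Coifman--McIntosh--Meyer $L^2$-boundedness, Rellich identities producing the lower bounds for $\pm\tfrac{1}{2}I+\mathcal{K}_{\beta}^{*}$, the dual estimate for dense range, the mapping of $\tfrac{1}{2}I-\mathcal{K}_{\beta}^{*}$ into $L^{2}_{0}(\Gamma_{\beta})$ with kernel spanned by the equilibrium density, and Plemelj symmetrization via the negativity of $\mathcal{S}_{\beta}$ on $L^{2}_{0}(\Gamma_{\beta})$ for $|\lambda|>\tfrac{1}{2}$ --- is precisely the argument of those references. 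You also correctly repair the paper's evident typo: the interval for invertibility on all of $L^{2}(\Gamma_{\beta})$ should read $(-\infty,-1/2]\cup(1/2,+\infty)$, in agreement with your conclusion that $\lambda=-\tfrac{1}{2}$ is retained and $\lambda=\tfrac{1}{2}$ excluded on the full space.
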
 With these essentials tools, we can now prove the following
decomposition formula in the same spirit as in \cite{kang1996}:
\begin{lem} \label{lemlatter} The solution of
problem~(\ref{eq:governing_equation}) can be written as
\begin{equation}
u(x)=H(x)+(\mathcal{S}_{s}\tilde{\varphi_{s}})(x)+(\mathcal{S}_{b}\varphi_{b})(x),\label{eq:decomposition_formula}
\end{equation}
 where
\begin{equation}
H(x)=\sum_{j=1}^{m}\alpha_{j}G(x-z_{j}),\label{eq:definition_H}
\end{equation}
 and the pair $(\tilde{\varphi_{s}},\varphi_{b})\in L^{2}(\Gamma_{s})\times L^{2}(\Gamma_{b})$
is uniquely determined by
\begin{equation}
\left\{ \begin{alignedat}{2}(\lambda_{s}I-\mathcal{K}_{s}^{*})\tilde{\varphi_{s}}-\frac{\partial\mathcal{S}_{b}\varphi_{b}}{\partial\nu} & =\frac{\partial H}{\partial\nu}, & \,\, x\in\Gamma_{s},\\
(\lambda_{b}I - \mathcal{K}_{b}^{*})\varphi_{b} -
\frac{\partial\mathcal{S}_{s}\tilde{\varphi_{s}}}{\partial\nu} & =
\frac{\partial H}{\partial\nu}, & \,\, x\in\Gamma_{b}.
\end{alignedat}
\right.\label{eq:definition_phis_phib}
\end{equation}
Here, $\lambda_b$ and $\lambda_s$ are given by (\ref{deflambdas}).
Moreover, the decomposition (\ref{eq:decomposition_formula}) of
$u$ into a source part $H$ and a refraction part
$\mathcal{S}_{s}\tilde{\varphi_{s}}+\mathcal{S}_{b}\varphi_{b}$ is
unique.\end{lem}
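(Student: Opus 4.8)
The plan is to treat (\ref{eq:decomposition_formula}) as an \emph{ansatz} and to reduce the boundary value problem (\ref{eq:governing_equation}) to the integral system (\ref{eq:definition_phis_phib}). First I would observe that, for \emph{every} pair $(\tilde{\varphi}_s,\varphi_b)\in L^2(\Gamma_s)\times L^2(\Gamma_b)$, the candidate $u=H+\mathcal S_s\tilde{\varphi}_s+\mathcal S_b\varphi_b$ already solves the bulk equation in each subdomain: the single-layer potentials are harmonic off their own boundaries, and since every source $z_j$ lies in $\Omega_b$ one has $\Delta H=f$ there and $\Delta H=0$ in $\Omega_s$ and in the water, so that dividing by the locally constant value of $k$ shows that $\nabla\cdot k\nabla u=k_b f$ holds region by region. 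Because single-layer potentials are continuous and $H$ is smooth near $\Gamma_b\cup\Gamma_s$, the trace of $u$ is automatically continuous across both interfaces. Hence the only requirements left are the two flux transmission conditions, $k_s\,\partial_\nu u|_+=k_b\,\partial_\nu u|_-$ on $\Gamma_b$ and $\partial_\nu u|_+=k_s\,\partial_\nu u|_-$ on $\Gamma_s$.

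Next I would turn these two conditions into (\ref{eq:definition_phis_phib}). Applying the jump relations (\ref{eq:jump_formulas}) to $\partial_\nu\mathcal S_b\varphi_b$ and $\partial_\nu\mathcal S_s\tilde{\varphi}_s$, and using that $\mathcal S_s\tilde{\varphi}_s$ has a continuous normal derivative across $\Gamma_b$ while $\mathcal S_b\varphi_b$ has one across $\Gamma_s$ (the two curves being disjoint), each flux condition collapses to a single integral equation on the relevant curve. Collecting the terms and dividing by the conductivity contrast produces exactly the diagonal operators $\lambda_s I-\mathcal K_s^*$ and $\lambda_b I-\mathcal K_b^*$ with $\lambda_s,\lambda_b$ as in (\ref{deflambdas}), the inter-boundary coupling being carried by $\partial_\nu\mathcal S_b|_{\Gamma_s}$ and $\partial_\nu\mathcal S_s|_{\Gamma_b}$. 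Solving (\ref{eq:governing_equation}) is therefore \emph{equivalent} to solving (\ref{eq:definition_phis_phib}).

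To solve the system I would write it as $\mathcal A_0-\mathcal C$ on $L^2(\Gamma_s)\times L^2(\Gamma_b)$, where $\mathcal A_0=\mathrm{diag}(\lambda_s I-\mathcal K_s^*,\,\lambda_b I-\mathcal K_b^*)$ and $\mathcal C$ is the off-diagonal coupling. The ordering (\ref{eq:ordering_conductivities}) gives $|\lambda_s|>1/2$ and $|\lambda_b|>1/2$, so Theorem \ref{thm:invertibility_lambda_K} makes $\mathcal A_0$ invertible on the whole space; and since $\Gamma_s\cap\Gamma_b=\emptyset$ the operator $\mathcal C$ has smooth kernels, hence is compact. Thus $\mathcal A_0-\mathcal C=\mathcal A_0(I-\mathcal A_0^{-1}\mathcal C)$ is Fredholm of index zero, and by the Fredholm alternative it is invertible as soon as it is injective. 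I would get injectivity from the uniqueness already established: a kernel element yields, through (\ref{eq:decomposition_formula}) with $H=0$, a solution of the source-free problem that (after the decay check below) must vanish, whereupon the jump identity $\partial_\nu\mathcal S_\beta\varphi|_+-\partial_\nu\mathcal S_\beta\varphi|_-=\varphi$ forces $\tilde{\varphi}_s=\varphi_b=0$. The \emph{same} jump identity yields uniqueness of the decomposition (\ref{eq:decomposition_formula}): if $\mathcal S_s\varphi_s+\mathcal S_b\varphi_b\equiv0$, taking normal-derivative jumps across $\Gamma_b$ and $\Gamma_s$ separately returns $\varphi_b=0$ and $\varphi_s=0$.

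The step I expect to be the main obstacle is the decay bookkeeping, because in two dimensions a single-layer potential behaves like $\frac{1}{2\pi}\big(\int_{\Gamma_s}\tilde{\varphi}_s+\int_{\Gamma_b}\varphi_b\big)\log|x|$ at infinity, whereas both the uniqueness argument and the far-field condition in (\ref{eq:governing_equation}) require $u=O(|x|^{-1})$. I would control it by integrating the two equations of (\ref{eq:definition_phis_phib}) over $\Gamma_s$ and $\Gamma_b$: with $\int_{\Gamma_\beta}\mathcal K_\beta^*\varphi=\tfrac12\int_{\Gamma_\beta}\varphi$, the divergence theorem (which gives $\int_{\Gamma_b}\partial_\nu\mathcal S_s\tilde{\varphi}_s=0$ and $\int_{\Gamma_s}\partial_\nu\mathcal S_b\varphi_b=\int_{\Gamma_b}\varphi_b$), and $\int_{\Gamma_b}\partial_\nu H=\int_{\Gamma_s}\partial_\nu H=\sum_j\alpha_j$, one obtains $\int_{\Gamma_b}\varphi_b$ and $\int_{\Gamma_s}\tilde{\varphi}_s$ as explicit multiples of $\sum_j\alpha_j$. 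Under the neutrality hypothesis (\ref{neutre}) these total charges vanish, $H$ itself decays, and $u=O(|x|^{-1})$, so the far-field condition holds and the decay check used above is justified; when (\ref{neutre}) fails, adding the contribution of $H$ to that of the two layers reproduces precisely the logarithmic coefficient recorded in (\ref{eq:governing_equation_rad}).
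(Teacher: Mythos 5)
Your proposal is correct, and its overall scaffolding is the same as the paper's: the ansatz (\ref{eq:decomposition_formula}), reduction of the transmission conditions to the integral system (\ref{eq:definition_phis_phib}) via the jump relations (\ref{eq:jump_formulas}), and the normal-derivative-jump argument for uniqueness of the decomposition, which matches the paper's verbatim. Where you genuinely diverge --- to your advantage --- is on solvability of the system. The paper dispatches existence in one line (``it comes from the fact that $|\lambda_s|, |\lambda_b| \in (1/2,+\infty)$ and Theorem \ref{thm:invertibility_lambda_K}''), but that theorem only inverts the diagonal operators $\lambda_\beta I - \mathcal{K}_\beta^*$; the off-diagonal coupling through $\partial_\nu \mathcal{S}_b|_{\Gamma_s}$ and $\partial_\nu \mathcal{S}_s|_{\Gamma_b}$ is never addressed. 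Your argument --- the coupling operators have smooth kernels because $\Gamma_s \cap \Gamma_b = \emptyset$, hence are compact, so $\mathcal{A}_0 - \mathcal{C}$ is Fredholm of index zero, and injectivity follows from the already-proved PDE uniqueness once the homogeneous system is shown to produce zero total charges --- supplies exactly the step the paper leaves implicit, and is the standard way to complete this Kang--Seo-type decomposition. Likewise, your decay bookkeeping (integrating the two equations, using $\int_{\Gamma_\beta} \mathcal{K}_\beta^* \varphi = \frac12 \int_{\Gamma_\beta} \varphi$ and the divergence theorem) is precisely the computation the paper performs immediately \emph{after} the lemma to justify (\ref{eq:governing_equation_rad}); you correctly recognize that in two dimensions it belongs inside the proof, since without the vanishing of $\int_{\Gamma_s}\tilde{\varphi}_s$ and $\int_{\Gamma_b}\varphi_b$ under (\ref{neutre}) the single-layer ansatz would violate the stated far-field condition.

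One caveat on a claim you state without displaying the computation: carrying out the reduction on $\Gamma_b$ literally yields $\bigl(\tfrac{k_s+k_b}{2(k_b-k_s)}I - \mathcal{K}_b^*\bigr)\varphi_b - \partial_\nu \mathcal{S}_s \tilde{\varphi}_s = \partial_\nu H$, i.e., the system (\ref{eq:definition_phis_phib}) holds with $\lambda_b$ equal to the \emph{negative} of the value in (\ref{deflambdas}). This is consistent with the inclusion-in-background pattern $\lambda = (k_{\mathrm{in}}+k_{\mathrm{out}})/(2(k_{\mathrm{in}}-k_{\mathrm{out}}))$ used later for the target, and with the paper's own (\ref{eq:modified_system}), whose $\varphi_b$-coefficient is $\tfrac{k_s}{k_s-k_b} - \tfrac12 + \mathcal{K}_b^*$; the discrepancy is a sign typo in (\ref{deflambdas}) rather than an error of yours, and it is harmless for your analysis since $|\lambda_b| > 1/2$ either way, so Theorem \ref{thm:invertibility_lambda_K} and the Fredholm argument go through unchanged. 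Just avoid asserting that dividing by the contrast lands ``exactly'' on (\ref{deflambdas}) without checking the sign.
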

\begin{proof} The
system~(\ref{eq:governing_equation}) is equivalent to the
following transmission problem~\cite{allaire2007numerical}:
\[
\left\{ \begin{alignedat}{2}\Delta u & ={f}, & \,\, x\in\mathbb{R}^{2}\setminus(\Gamma_{b}\cup\Gamma_{s}),\\
\left.u\right|_{+}-\left.u\right|_{-} & =0, & \,\, x\in\Gamma_{b}\cup\Gamma_{s},\\
k_{s}\left.\frac{\partial u}{\partial\nu}\right|_{+}-k_{b}\left.\frac{\partial u}{\partial\nu}\right|_{-} & =0,
& \,\, x\in\Gamma_{b},\\
\left.\frac{\partial u}{\partial\nu}\right|_{+}-k_{s}\left.\frac{\partial u}{\partial\nu}\right|_{-} & =0, &
\,\, x\in\Gamma_{s},\\
\left|u\right| & = {O}(\left|x\right|^{-1}), &
\,\,\left|x\right|\rightarrow\infty,\text{ uniformly in }\hat{x}.
\end{alignedat}
\right.
\]
The existence of a solution $(\tilde{\varphi}_s, \varphi_b)$ to
(\ref{eq:definition_phis_phib}) comes from the fact that
$|\lambda_s|, |\lambda_b|  \in (1/2, + \infty)$  and Theorem
\ref{thm:invertibility_lambda_K}. On the other hand, the functions
$\mathcal{S}_{s}\tilde{\varphi_{s}}$ and
$\mathcal{S}_{b}\varphi_{b}$ are harmonic in $\Omega_{b}$, and
according to the definition of $H$, we have $\Delta u= {f}$ in
$\Omega_{b}$. In $\Omega_{s}$ and
$\mathbb{R}^{2}\setminus\overline{\Omega}_{s}\cup
\overline{\Omega}_{b}$, all these functions are harmonic so we
have $\Delta u=0$. The trace relations on $\Gamma_b$ and
$\Gamma_s$ are then given by the singularities
(\ref{eq:jump_formulas}) of $\mathcal{S}_{s}$ and
$\mathcal{S}_{b}$ (see~\cite{ammari2007polarization}) since $H$ is
smooth away from the points~$z_{j}$. Finally, all these functions
are controlled by~$\left|x\right|^{-1}$
when~$\left|x\right|\rightarrow\infty$. In this way, the existence
of a solution to~(\ref{eq:governing_equation}) is proved.

To prove the uniqueness of the decomposition, let us take
$\tilde{\varphi_{s}}'$ and $\varphi_{b}'$ such that
\[
H+\mathcal{S}_{s}\tilde{\varphi_{s}}+\mathcal{S}_{b}\varphi_{b}=H+\mathcal{S}_{s}\tilde{\varphi_{s}}'+\mathcal{S}_{b}\varphi_{b}'.
\]
 Then, $\mathcal{S}_{s}(\tilde{\varphi_{s}}-\tilde{\varphi_{s}}')=\mathcal{S}_{b}(\varphi_{b}'-\varphi_{b})$
is harmonic in $\Omega_{s}\cup\overline{\Omega}_{b}$, which gives
by the jump formula $\varphi_{b}=\varphi_{b}'$. Finally, applying
once more the jump formula, we have
$\tilde{\varphi_{s}}=\tilde{\varphi_{s}}'$.
\end{proof}

We now check the far field condition stated in
(\ref{eq:governing_equation_rad}). Recall that $\mathcal{K}_b(1) =
\mathcal{K}_s(1)=1/2$. From
$$
\int_{\Gamma_b}
\frac{\partial\mathcal{S}_{b}\varphi_{b}}{\partial\nu} =
\int_{\Gamma_b} \varphi_b, \quad \int_{\Gamma_s}
\frac{\partial\mathcal{S}_{s}\tilde{\varphi_{s}}}{\partial\nu} =
0, \quad \int_{\Gamma_s} \frac{\partial H}{\partial \nu} =
\int_{\Gamma_b} \frac{\partial H}{\partial \nu} = \sum_j \alpha_j,
$$
by taking the average of the two equations in
(\ref{eq:definition_phis_phib}) on $\Gamma_s$ and $\Gamma_b$,
respectively, we find that
$$
\int_{\Gamma_s} \tilde{\varphi_{s}} =  (\sum_j \alpha_j)
\frac{(\lambda_b + 1/2)}{(\lambda_s -1/2) (\lambda_b -1/2)} \quad
\mbox{ and } \quad \int_{\Gamma_b} {\varphi_{b}} = \frac{\sum_j
\alpha_j}{\lambda_b -1/2},
$$
and therefore, from the representation formula
(\ref{eq:decomposition_formula}) it follows that
$$\left|u_{\delta,k_{b}} - (\sum_j \alpha_j) \frac{(\lambda_b + 1/2) (\lambda_s +
1/2)}{2 \pi (\lambda_b -1/2) (\lambda_s - 1/2)} \log |x| \right| =
{O}(\left|x\right|^{-1}), \quad
\,\,\left|x\right|\rightarrow\infty \text{ uniformly in
}\hat{x}.$$ Note that in the limit $\delta\rightarrow 0$ and $k_b
\rightarrow \infty$, the far field condition above and
(\ref{deflambdas}) yield $\lambda_b \rightarrow -1/2, \lambda_s
\rightarrow -1/2$, and therefore,
\begin{equation} \label{radiationoinft}
\left|u_{0,\infty} \right| = {O}(\left|x\right|^{-1}), \quad
\,\,\left|x\right|\rightarrow\infty \text{ uniformly in }\hat{x}.
\end{equation}
For the system~(\ref{eq:asymptotic_equation}), Lemma
\ref{lemlatter} yields the following result.
\begin{lem} \label{lem:decomposition_lemma_asymptotic} Assume that (\ref{neutre}) holds. The solution
of problem~(\ref{eq:asymptotic_equation}) can be written as
\begin{equation}
u(x)=H(x)-\frac{1}{\xi}(\mathcal{S}_{b}\varphi)(x)+(\mathcal{D}_{b}\varphi)(x),
\label{eq:decomposition_formula_asymptotic}
\end{equation}
 where $H$ is given by (\ref{eq:definition_H}) and $\varphi\in L_0^{2}(\Gamma_{b}):= \{\phi \in L^2(\Gamma_b): \int_{\Gamma_b}
 \phi =0\}$
is given by the following integral equation:
\begin{equation} \label{eq:decomposition_formula_asymptotic2}
\frac{1}{\xi}\left(\frac{1}{2}I-\mathcal{K}_{b}^{*}\right)\varphi
+\frac{\partial\mathcal{D}_{b}\varphi}{\partial\nu}=-\frac{\partial
H}{\partial\nu}, \quad x \in \Gamma_b.
\end{equation}
 The decomposition (\ref{eq:decomposition_formula_asymptotic}) of $u$ into a source part and a refraction part is unique.
\end{lem}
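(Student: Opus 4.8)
The plan is to proceed exactly as in Lemma \ref{lemlatter}: show that the ansatz (\ref{eq:decomposition_formula_asymptotic}) satisfies every line of (\ref{eq:asymptotic_equation}) if and only if the density $\varphi$ solves (\ref{eq:decomposition_formula_asymptotic2}), then prove that (\ref{eq:decomposition_formula_asymptotic2}) is uniquely solvable in $L^2_0(\Gamma_b)$, and finally deduce the uniqueness of the decomposition from the jump relations. First I would record the easy facts: since each $z_j\in\Omega_b$, the function $H$ of (\ref{eq:definition_H}) is smooth near $\Gamma_b$ and satisfies $\Delta H=f$ in $\Omega_b$ and $\Delta H=0$ in $\mathbb{R}^2\setminus\overline{\Omega}_b$, while $\mathcal{S}_b\varphi$ and $\mathcal{D}_b\varphi$ are harmonic off $\Gamma_b$; hence $u$ given by (\ref{eq:decomposition_formula_asymptotic}) satisfies the first two lines of (\ref{eq:asymptotic_equation}) for any $\varphi$.

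Next I would take the traces on $\Gamma_b$ using (\ref{eq:jump_formulas}). Since $\frac{\partial H}{\partial\nu}$ and $\frac{\partial\mathcal{D}_b\varphi}{\partial\nu}$ are continuous across $\Gamma_b$ and $\frac{\partial\mathcal{S}_b\varphi}{\partial\nu}\big|_{\pm}=(\pm\frac12 I+\mathcal{K}_b^*)\varphi$, one obtains
$$
\left.\frac{\partial u}{\partial\nu}\right|_- = \frac{\partial H}{\partial\nu} - \frac{1}{\xi}\Big(-\tfrac12 I+\mathcal{K}_b^*\Big)\varphi + \frac{\partial\mathcal{D}_b\varphi}{\partial\nu},
$$
so the interior condition $\frac{\partial u}{\partial\nu}\big|_-=0$ is precisely (\ref{eq:decomposition_formula_asymptotic2}). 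The same jump formulas give $u|_+-u|_-=-\varphi$ and $\frac{\partial u}{\partial\nu}\big|_+-\frac{\partial u}{\partial\nu}\big|_-=-\frac{1}{\xi}\varphi$; combined with $\frac{\partial u}{\partial\nu}\big|_-=0$ this yields $\xi\,\frac{\partial u}{\partial\nu}\big|_+=-\varphi=u|_+-u|_-$, so the Robin condition holds automatically once (\ref{eq:decomposition_formula_asymptotic2}) is satisfied. The decay (\ref{radiationoinft}) then follows because $\int_{\Gamma_b}\varphi=0$ suppresses the logarithmic growth of $\mathcal{S}_b\varphi$, the double-layer kernel is $O(|x|^{-1})$, and (\ref{neutre}) kills the logarithmic part of $H$.

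The heart of the proof is the unique solvability of (\ref{eq:decomposition_formula_asymptotic2}) in $L^2_0(\Gamma_b)$. That $L^2_0$ is the right space is consistent: integrating the left-hand side over $\Gamma_b$ gives zero (use $\mathcal{K}_b 1=\tfrac12$ for the second-kind part and the divergence theorem for the hypersingular part), and the right-hand side integrates to $-\sum_j\alpha_j=0$ by (\ref{neutre}). For uniqueness I would run an energy argument on the homogeneous problem ($f=0$): the condition $\frac{\partial u}{\partial\nu}\big|_-=0$ makes $u$ constant in $\Omega_b$, and integrating over the exterior while using the Robin relation and $\int_{\Gamma_b}\frac{\partial u}{\partial\nu}\big|_+=0$ gives
$$
\int_{\mathbb{R}^2\setminus\overline{\Omega}_b}|\nabla u|^2 = -\xi\int_{\Gamma_b}\left(\left.\frac{\partial u}{\partial\nu}\right|_+\right)^2\le 0 ,
$$
so $u\equiv 0$ and hence $\varphi=u|_--u|_+=0$. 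Existence is the delicate point and the main obstacle: the operator $\frac1\xi(\tfrac12 I-\mathcal{K}_b^*)$ is invertible on $L^2_0(\Gamma_b)$ by Theorem \ref{thm:invertibility_lambda_K}, but the hypersingular operator $\frac{\partial\mathcal{D}_b}{\partial\nu}$ is not a compact perturbation, so the Fredholm alternative cannot be applied directly. I expect to resolve this by symmetrizing with $\mathcal{S}_b$ and invoking the Calder\'on identity, which converts $\mathcal{S}_b\frac{\partial\mathcal{D}_b}{\partial\nu}$ into a bounded operator expressible through $\mathcal{K}_b$; the composed equation is then of the second kind, and Fredholm index zero together with the injectivity above yields invertibility, after dealing with the usual two-dimensional subtlety of $\mathcal{S}_b$ on closed curves. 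This is precisely the layer-potential machinery of Zribi and Khelifi--Zribi cited in the text.

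Finally, the uniqueness of the decomposition is immediate: if $\varphi,\varphi'\in L^2_0(\Gamma_b)$ give the same refraction part, then $v:=-\frac1\xi\mathcal{S}_b(\varphi-\varphi')+\mathcal{D}_b(\varphi-\varphi')$ vanishes identically in $\mathbb{R}^2\setminus\Gamma_b$, and taking its jump across $\Gamma_b$ gives $0=v|_+-v|_-=-(\varphi-\varphi')$, whence $\varphi=\varphi'$.
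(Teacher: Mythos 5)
Your proof is correct and follows essentially the same route as the paper, whose entire proof of this lemma is the single remark that it uses ``exactly the same arguments as in the previous one: jump formulas applied to the operators'' — precisely your verification via the trace relations (\ref{eq:jump_formulas}) that the interior Neumann condition is equivalent to (\ref{eq:decomposition_formula_asymptotic2}), that the Robin condition then holds automatically, and that the zero-mean conditions kill the logarithmic terms at infinity. You are in fact more careful than the paper on the one genuinely delicate point, the solvability of (\ref{eq:decomposition_formula_asymptotic2}) in $L_0^2(\Gamma_b)$, where Theorem \ref{thm:invertibility_lambda_K} no longer applies because of the hypersingular term $\partial\mathcal{D}_b/\partial\nu$; your energy argument for injectivity and the Calder\'on-identity symmetrization for existence are the standard way to close this gap, which the paper leaves implicit by deferring to the layer-potential machinery of \cite{zribilayer,khelifizribi}.
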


In the general case, (\ref{eq:decomposition_formula_asymptotic2})
should be replaced with
$$
\frac{1}{\xi}\left(\frac{1}{2}I-\mathcal{K}_{b}^{*}\right)\varphi
+\frac{\partial\mathcal{D}_{b}\varphi}{\partial\nu}=-\frac{\partial
H}{\partial\nu} + \frac{1}{|\Gamma_b|} \sum_j \alpha_j, \quad x
\in \Gamma_b.
$$
Note that since $\int_{\Gamma_b} \frac{\partial H}{\partial \nu} =
\sum_j \alpha_j$, the far field condition (\ref{radiationoinft})
is satisfied in the general case.

The proof of this lemma involves exactly the same arguments as in
the previous one: jump formulas applied to the operators.

The decomposition formulas~(\ref{eq:decomposition_formula}) and
(\ref{eq:decomposition_formula_asymptotic}) will be essential in
the next part to show that, at the first-order, $u_{\delta,k_{b}}$
converges to $u_{0,\infty}$.

\subsubsection{Asymptotic expansion of the electric potential for highly resistive
skin and highly conductive body}

\label{sub:BC-derivation}

In this part, we will use the decomposition formula for
$u_{\delta,k_{b}}$ and compute asymptotic expansions of the
refraction part. The limiting solution will then be
$u_{0,\infty}$. This latter is well defined if the limits
$\delta\rightarrow0$ and $k_{b}\rightarrow\infty$ are independent,
so we must seek the two following limits:
\[
\lim_{k_{b}\rightarrow\infty}\lim_{\delta\rightarrow0}u_{\delta,k_{b}}\mbox{ and }\lim_{\delta\rightarrow0}\lim_{k_{b}\rightarrow\infty}u_{\delta,k_{b}},
\]
 and show that they are the same. Zribi \cite[chapter 3]{zribilayer}
studied the case when $k_{b}$ remains fixed, with non-uniform
thickness of the skin $\Omega_{s}$; the limit $u_{0,1}$ is the
solution of the system:
\begin{equation}
\left\{ \begin{alignedat}{2}\Delta u_{0,1} & ={f}, & \,\, x\in\Omega_{b},\\
\Delta u_{0,1} & =0, & \,\, x\in\mathbb{R}^{2}\setminus\overline{\Omega}_{b},\\
\left.u_{0,1}\right|_{+}-\left.u_{0,1}\right|_{-} & =-\xi\left.\frac{\partial u_{0,1}}{\partial\nu}\right|_{+}, & \,\, x\in\partial\Omega_{b},\\
\left.\frac{\partial u_{0,1}}{\partial\nu}\right|_{+}-k_{b}\left.\frac{\partial u_{0,1}}{\partial\nu}\right|_{-} & =0, & \,\, x\in\Omega_{b},\\
\left|u_{0,1}\right| & = {O}(\left|x\right|^{-1}), &
\,\,\left|x\right|\rightarrow\infty,\text{ uniformly in }\hat{x}.
\end{alignedat}
\right.\label{eq:asymptotic_equation_zribi}
\end{equation}
 Here, we will follow the same outline for the proof: first we will
remind the asymptotic expansions of the operators involved in
(\ref{eq:definition_phis_phib}), and then we will match the
asymptotic expansions for $\tilde{\varphi}_{s}$ and $\varphi_{b}$.

\subsubsection*{Asymptotic expansions of the operators}

In the decomposition formula (\ref{eq:decomposition_formula}), $H$
is independent of $\delta$ and $k_{b}$; we just have to analyze
the dependence of $\tilde{\varphi}_{s}$ and $\varphi_{b}$. Remark
that from (\ref{eq:definition_phis_phib})
\begin{itemize}
\item the dependence on $k_{b}$ is carried only by $\lambda_{b}$
since $\mathcal{S}_{b}$ and $\mathcal{K}_{b}^{*}$ depend only on
the shape of $\Omega_{b}$; \item the dependence on $\delta$ is
carried by $\lambda_{s}$, $\mathcal{S}_{s}$, $\mathcal{K}_{s}^{*}$
and $\partial/\partial\nu(x)$ for $x\in\Gamma_{s}$.
\end{itemize}
In this subsection, we will focus on the asymptotic expansions of
the operators (the limits of $\lambda_{s}$ and $\lambda_{b}$ are
obvious). They have been performed in \cite{ammari2010conductivity,zribilayer};
in order to apply this proof, we first need some assumptions.

Suppose $\Gamma_{b}$ is defined in the following way:
\[
\Gamma_{b}:=g\left(\partial B\right),
\]
 where $g$ is a $\mathcal{C}^{3,\eta}$ diffeomorphism of the unit sphere $\partial B:=\partial B(0,1)$
for some $\eta>0$. Moreover, we suppose that the function
$X_g:[0,2\pi]\rightarrow\mathbb{R}^{2}$ defined by
\[
X_g =g\left(\left(\begin{array}{c}
\cos t\\
\sin t
\end{array}\right)\right),
\]
 is such that $\left|X_g'(t)\right|=1$ for all $t\in[0,2\pi]$. Thus,
$X_g$ is a $\mathcal{C}^{2,\eta}$ arclength counterclockwise
parametrization of $\Gamma_{b}$. Then the outward unit normal to
$\Omega_{b}$, $\nu(x)$ at $x=X_g(t)$, is given by
\[
\nu(x)=R_{-\frac{\pi}{2}}X_g'(t),
\]
 where $R_{-\frac{\pi}{2}}$ is the rotation by $-\pi/2$. The tangential
vector $T(x)$ at $x =X_g(t)$ is defined by
\[
T(x)=X_g'(t),
\]
 and $X_g'(t)\bot X_g''(t)$. The curvature $\tau(x)$ at $x=X_g(t)$ is defined
by
\[
X_g''(t)=\tau(x)\nu(x).
\]
 Let $\Psi_{\delta}$ be the diffeomorphism from $\Gamma_{b}$ onto
$\Gamma_{s}$ given by
\begin{equation} \label{defpsidelta}
\Psi_{\delta}(x)=x+\delta\nu(x).\end{equation}

With these assumptions, the following regularity result holds \cite{crisoforis2004}:
\begin{thm} \label{thm:analycity}Let $\eta>0$. Let, for
a Lipschitz function $g \in \mathcal{C}^{0,1}\left(\partial
B,\mathbb{R}^{2}\right)$,
\[
l_{\partial B}[g]:=\inf_{x\neq y\in\partial
B}\left|\frac{g(x)-g(y)}{x-y}\right|.
\]
Introduce the set $\mathcal{A}_{\partial B}$ of admissible
diffeomorphisms of the unit sphere:
\[
\mathcal{A}_{\partial B}:=\left\{ g\in \mathcal{C}^{1}(\partial
B,\mathbb{R}^{2}),l_{\text{\ensuremath{\partial}B}}[g]>0\right\} .
\]
 Then, for any integer $m>0$, the operators $S$ and $D$ defined on
  $\left(\mathcal{C}^{m,\eta}(\partial B,\mathbb{R}^{2})\cap\mathcal{A}_{\partial B}\right)
 \times \mathcal{C}^{m-1,\eta}(\partial B)$
($\left(\mathcal{C}^{m,\eta}(\partial
B,\mathbb{R}^{2})\cap\mathcal{A}_{\partial B}\right)\times
\mathcal{C}^{m,\eta}(\partial B)$, respectively) to
$\mathcal{C}^{m,\eta}(\partial B)$ by
\[
\begin{alignedat}{2}S[g,\varphi](x) & := & \mathcal{S}_{g(\partial B)}(\varphi \circ g^{-1})\circ g(x), & \, x\in\partial B,\\
D[g,\varphi](x) & := & \mathcal{D}_{g(\partial B)}(\varphi \circ
g^{-1})\circ g(x), & \, x\in\partial B,
\end{alignedat}
\]
 are jointly analytic with respect to their variables $g$ and $\varphi$.
\end{thm}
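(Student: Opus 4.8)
The plan is to reduce the claimed joint analyticity to the analyticity of the \emph{kernels} of the pulled-back operators, after isolating their singular parts. Pulling the layer potentials on $\Gamma=g(\partial B)$ back to the reference sphere introduces a Jacobian factor $\omega_g$ (the modulus of the tangential derivative of $g$, i.e.\ the stretching of the arclength element), so that
\[
S[g,\varphi](x)=\int_{\partial B}G\bigl(g(x)-g(y)\bigr)\,\varphi(y)\,\omega_g(y)\,d\sigma(y),
\]
and similarly for $D$. The factor $\omega_g$ depends analytically on the first derivatives of $g$, and products of analytic maps together with superposition with fixed bounded (multi)linear maps preserve analyticity; hence $\varphi$ may be absorbed into the jointly analytic factor $(g,\varphi)\mapsto\varphi\,\omega_g$, and the task reduces to analyticity of the $g$-dependent kernel.

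First I would split the logarithmic kernel of $S$ as
\[
\log|g(x)-g(y)|=\log|x-y|+\tfrac12\log R[g](x,y),\qquad R[g](x,y):=\frac{|g(x)-g(y)|^2}{|x-y|^2}.
\]
The first summand produces the \emph{fixed} single-layer operator on $\partial B$, which is independent of $g$ and smoothing of order one, hence maps $\mathcal{C}^{m-1,\eta}(\partial B)$ boundedly into $\mathcal{C}^{m,\eta}(\partial B)$; composed with $(g,\varphi)\mapsto\varphi\,\omega_g$ it contributes an analytic term. The second summand is \emph{non-singular}: by the admissibility condition $l_{\partial B}[g]>0$ the quotient $R[g]$ is bounded below by $l_{\partial B}[g]^2>0$ and extends smoothly across the diagonal (where it equals the squared tangential derivative of $g$), so $\tfrac12\log R[g]$ is a genuine Hölder function on $\partial B\times\partial B$ and its integral operator is smoothing into $\mathcal{C}^{m,\eta}$.

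The crux -- and the step I expect to be the main obstacle -- is to show that the map $g\mapsto R[g]$ (equivalently, the difference quotient $(x,y)\mapsto\bigl(g(x)-g(y)\bigr)/|x-y|$ along the curve, extended to the diagonal) is \emph{real-analytic} from $\mathcal{C}^{m,\eta}(\partial B,\mathbb{R}^2)\cap\mathcal{A}_{\partial B}$ into a suitable Hölder space over $\partial B\times\partial B$. Granting this, analyticity of $\tfrac12\log R[g]$ follows because $t\mapsto\log t$ is analytic on $(0,\infty)$ while $R[g]$ stays in a compact subinterval bounded away from $0$; analyticity of $\omega_g$ and of the product and integration maps then propagates to $S[g,\varphi]$.

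For the double layer the same scheme applies. Its pulled-back kernel
\[
\frac{1}{2\pi}\,\frac{\bigl(g(y)-g(x)\bigr)\cdot\nu_{g(y)}}{|g(x)-g(y)|^2}
\]
is, in two dimensions and for $\mathcal{C}^2$ curves, continuous up to the diagonal after the same difference-quotient regularization (its diagonal value being a multiple of the curvature), so $D[g,\cdot]$ is again an operator with a non-singular, analytically $g$-dependent kernel; this also explains the loss of one derivative in its domain relative to $S$. Establishing the difference-quotient analyticity rigorously in the Hölder scale is precisely the content of \cite{crisoforis2004}, whose results I would invoke, and if necessary adapt to the present two-dimensional setting, to complete the argument.
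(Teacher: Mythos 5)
The paper does not actually prove Theorem~\ref{thm:analycity}: it is quoted as a known regularity result, with the proof (and the explicit formulas for the derivatives in $g$) delegated entirely to \cite{crisoforis2004}. Since your argument also terminates in an appeal to \cite{crisoforis2004} for what you correctly identify as the crux --- real-analytic dependence of the difference quotient of $g$ in the H\"older scale --- your proposal is, at bottom, the same treatment as the paper's, supplemented by a sketch of how the cited proof is organized: pull-back with the Jacobian $\omega_g$ (analytic in $g$ because $l_{\partial B}[g]>0$ keeps $|g'|$ away from zero), splitting off the fixed logarithmic kernel on $\partial B$, and composing the analytic function $\log$ with $R[g]$, which stays bounded below by $l_{\partial B}[g]^{2}$. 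That outline is faithful to the strategy of the reference, and your remarks on the double layer (boundedness of the pulled-back kernel up to the diagonal for $\mathcal{C}^{1,\eta}$ curves, with curvature as diagonal value, accounting for the absence of a derivative gain) are correct.

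One caveat on the single step you present as routine: the claim that $\tfrac12\log R[g]$, being H\"older on $\partial B\times\partial B$, generates an integral operator smoothing into $\mathcal{C}^{m,\eta}$ is too coarse and would fail as stated. A merely H\"older-continuous kernel maps integrable densities into $\mathcal{C}^{0,\eta}$-type spaces only; to land in $\mathcal{C}^{m,\eta}$ one needs $m$ derivatives of the kernel in $x$ with uniform H\"older bounds, whereas for $g\in\mathcal{C}^{m,\eta}$ the difference-quotient structure makes $\log R[g]$ only of class $\mathcal{C}^{m-1,\eta}$ jointly near the diagonal. Differentiating $m$ times in $x$ reintroduces Cauchy-type singularities from each of the two logarithms separately, and the one-derivative gain for this term comes from the cancellation of their diagonal singularities --- precisely the kernel-class bookkeeping carried out in \cite{crisoforis2004}, not from soft smoothing of a non-singular kernel. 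Since you invoke (and offer to adapt) that reference anyway, your argument closes exactly as the paper's does; but a self-contained version would have to replace the ``H\"older kernel, hence smoothing'' step by that finer analysis.
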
 Moreover, we have explicit formulas for the derivatives
with respect to the variable $g$ \cite{crisoforis2004}.

Then, we have the following asymptotic expansions \cite{ammari2010conductivity,zribilayer}:
\begin{prop} \label{pro:DL_operators}Let $\varphi\in\mathcal{C}^{1,\eta}(\Gamma_{b})$
and $\tilde{\psi}\in\mathcal{C}^{1,\eta}(\Gamma_{s})$ for some $\eta>0$.
Then, we have the following asymptotic expansions for $x\in\Gamma_{b}$:

\begin{equation}
\begin{aligned}\left(\mathcal{K}_{s}^{*}\tilde{\psi}\right)\circ\Psi_{\delta}(x) & =\mathcal{K}_{b}^{*}\psi(x)+\delta\mathcal{K}_{b}^{(1)}\psi(x)
+O(\delta^{2}),\\
\frac{\partial\mathcal{S}_{b}\varphi}{\partial\nu}\circ\Psi_{\delta}(x) & =\left(\frac{1}{2}I+\mathcal{K}_{b}^{*}\right)\varphi(x)+\delta\mathcal{R}_{b}\varphi(x)+O(\delta^{1+\eta}),\\
\frac{\partial\mathcal{S}_{s}\tilde{\psi}}{\partial\nu}(x) & =\left(-\frac{1}{2}I+\mathcal{K}_{b}^{*}\right)\psi(x)+\delta\mathcal{L}_{b}\psi(x)+O(\delta^{1+\eta}),
\end{aligned}
\label{eq:DL_operators}
\end{equation}
 where $\psi:=\tilde{\psi}\circ\Psi_{\delta}$, $\Psi_\delta$ being defined by (\ref{defpsidelta}), and
\begin{equation}
\begin{aligned}\mathcal{K}_{b}^{(1)}\psi(x) & =\tau(x)\mathcal{K}_{b}^{*}\psi(x)-\mathcal{K}_{b}^{*}(\tau\psi)(x)-\frac{d^{2}\mathcal{S}_{b}\psi}{dt^{2}}(x)+\frac{\partial\mathcal{D}_{b}\psi}{\partial\nu}(x),\\
\mathcal{R}_{b}\varphi(x) & =\tau(x)\left(\frac{1}{2}I+\mathcal{K}_{b}^{*}\right)\varphi(x)
-\frac{d^{2}\mathcal{S}_{b}\varphi}{dt^{2}}(x),\\
\mathcal{L}_{b}\psi(x) & =\left(\frac{1}{2}I-\mathcal{K}_{b}^{*}\right)(\tau\psi)(x)+\frac{\partial\mathcal{D}_{b}\psi}{\partial\nu}(x),
\end{aligned}
\label{eq:definition_K1_R_L}
\end{equation}
 where $d/dt$ is the tangential derivative in the direction of $T(x)=X_g' \circ X_g^{-1}(x)$.
\end{prop}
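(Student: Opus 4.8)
The plan is to treat all three expansions as Taylor expansions in $\delta$ of operators living on the perturbed curve $\Gamma_s$, pulled back to the fixed reference curve $\Gamma_b$. The organizing observation is that $\Gamma_s = \Psi_\delta(\Gamma_b)$ with $\Psi_\delta = \mathrm{Id} + \delta\nu$ affine in $\delta$, so by Theorem \ref{thm:analycity} the pulled-back single- and double-layer operators depend analytically on $\delta$; the zeroth-order terms reproduce the operators on $\Gamma_b$, and the first-order coefficients are exactly their shape derivatives, for which explicit formulas are available. First I would fix the arclength parametrization $x = X_g(t)$ of $\Gamma_b$ and parametrize $\Gamma_s$ by $X_{g_\delta}(t) := X_g(t) + \delta\nu(X_g(t))$. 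Differentiating and using $\nu = R_{-\pi/2}X_g'$ together with the curvature relation $X_g'' = \tau\nu$ gives $X_{g_\delta}'(t) = (1-\delta\tau(X_g(t)))\,X_g'(t)$, so $|X_{g_\delta}'| = 1 - \delta\tau + O(\delta^2)$ and the arclength element on $\Gamma_s$ pulls back to $(1-\delta\tau)\,dt$. This single identity is the source of every curvature factor $\tau$ appearing in $\mathcal{K}_b^{(1)}$, $\mathcal{R}_b$, and $\mathcal{L}_b$.

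For the first expansion I would substitute $X = \Psi_\delta(x)$ and $Y = \Psi_\delta(y)$ into the principal-value kernel $\frac{(X-Y)\cdot\nu(X)}{2\pi|X-Y|^2}$ of $\mathcal{K}_s^*$, multiply by the measure factor $(1-\delta\tau(y))\,dy$, and expand in $\delta$. The leading term is $\mathcal{K}_b^*\psi$; the first-order integrand must then be reorganized, by integration by parts along $\Gamma_b$, into the combination in (\ref{eq:definition_K1_R_L}) defining $\mathcal{K}_b^{(1)}$, the derivatives falling on the Green function producing the double-layer normal derivative $\frac{\partial\mathcal{D}_b\psi}{\partial\nu}$ and the second tangential derivative $\frac{d^2\mathcal{S}_b\psi}{dt^2}$, while the measure factor produces the curvature commutator $\tau\mathcal{K}_b^*\psi - \mathcal{K}_b^*(\tau\psi)$. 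For the second and third expansions I would instead exploit the trace relations (\ref{eq:jump_formulas}). Since $\Psi_\delta(x)$ sits at distance $\delta$ on the exterior ($+$) side of the fixed curve $\Gamma_b$, the leading value of $\frac{\partial\mathcal{S}_b\varphi}{\partial\nu}\circ\Psi_\delta$ is the exterior trace $(\tfrac12 I + \mathcal{K}_b^*)\varphi$; the $O(\delta)$ correction $\mathcal{R}_b$ comes from Taylor-expanding the normal derivative off the boundary and using harmonicity of $\mathcal{S}_b\varphi$ to convert the second normal derivative into the curvature term and the tangential $\frac{d^2\mathcal{S}_b\varphi}{dt^2}$. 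Symmetrically, evaluating $\frac{\partial\mathcal{S}_s\tilde\psi}{\partial\nu}$ at $x\in\Gamma_b$, which lies just inside $\Gamma_s$ (the interior $-$ side), gives the leading interior trace $(-\tfrac12 I + \mathcal{K}_b^*)\psi$ after identifying $\mathcal{K}_s^*$ with $\mathcal{K}_b^*$ via the first expansion, with the combined effect of the shape change and the finite normal offset assembling into $\mathcal{L}_b$.

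For the remainders I would invoke Theorem \ref{thm:analycity} directly: as $g_\delta = \Psi_\delta\circ g$ is affine in $\delta$ and $g \mapsto S[g,\cdot], D[g,\cdot]$ are jointly analytic, the pulled-back operators are genuinely analytic near $\delta = 0$, and the Taylor remainders are controlled by the first omitted term. The order $O(\delta^2)$ for the $\mathcal{K}^*$ expansion and $O(\delta^{1+\eta})$ for the two single-layer normal-derivative expansions then follow from the assumed $\mathcal{C}^{3,\eta}$ regularity of $g$ and $\mathcal{C}^{1,\eta}$ regularity of the densities, the H\"older exponent $\eta$ being exactly what limits the smoothness of the second-order coefficient in the latter two cases.

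I expect the main obstacle to be the first expansion rather than the abstract analyticity: faithfully tracking the principal-value singular kernel of $\mathcal{K}_s^*$ as both its argument and its integration variable are displaced by $\delta\nu$, and then recognizing the first-order integrand as precisely $-\frac{d^2\mathcal{S}_b}{dt^2} + \frac{\partial\mathcal{D}_b}{\partial\nu}$ plus the curvature commutator. This identification is not a routine Taylor step: it requires integrating by parts along $\Gamma_b$ to move derivatives off the Green function and onto the density, while carefully preserving the uniformity of the error estimate near the diagonal singularity, where naive termwise expansion of the kernel fails.
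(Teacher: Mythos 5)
Your proposal is correct and takes essentially the same route as the paper's source: the paper does not prove Proposition~\ref{pro:DL_operators} itself but defers to \cite{ammari2010conductivity,zribilayer}, where the expansions are obtained exactly as you describe --- parametrizing $\Gamma_s$ by $X_g+\delta\nu$ so that the arclength element pulls back to $(1-\delta\tau)\,dt$, expanding the (principal-value) kernels, and using the trace relations together with harmonicity of the single-layer potentials to convert second normal derivatives into the curvature and tangential-derivative terms. Your bookkeeping is also consistent with the stated formulas (e.g., your scheme for the third expansion reproduces $\mathcal{L}_b=(\tfrac12 I-\mathcal{K}_b^{*})(\tau\,\cdot)+\partial\mathcal{D}_b/\partial\nu$ from the $\mathcal{K}_s^{*}$ expansion plus the normal-offset Taylor step) and with the paper's remark that the $O(\delta^{1+\eta})$ constants depend on $\|g\|_{\mathcal{C}^{3,\eta}}$ through Theorem~\ref{thm:analycity}.
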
 Note that, according to Theorem~\ref{thm:analycity}, the constants in the $O(\delta^{1+\eta})$ terms depend on
$\left\Vert g\right\Vert _{\mathcal{C}^{3,\eta}}$.

Moreover, since the thickness of $\Omega_{s}$ is uniform, we have
$\nu\circ\Psi_{\delta}(x)=\nu(x)$, and a Taylor expansion of $H$
gives, for $x\in\Gamma_{b}$:
\begin{equation}
\frac{\partial H}{\partial\nu}\circ\Psi_{\delta}(x)=\frac{\partial
H}{\partial\nu}(x) +\delta \nu(x) \cdot \big[ D^{2}H(x)\nu(x)
\big] +O(\delta^{2}),\label{eq:DL_H}
\end{equation}
 where $D^{2}H$ denotes the Hessian of $H$.

\subsubsection*{Asymptotic expansions on the layers}

In order to prove Theorem~\ref{thm:main-result}, we will first
show the convergence on the layers (see next lemma). Then, in the
next subsection, we will extend the domain of validity by
application of the maximum principle.

The following lemma holds. \begin{lem} There exist constants $C$
and $C'$ independent of $\delta$ and $k_{b}$ such that the
following inequalities hold for $\delta$ and $1/k_{b}$ small
enough:
\begin{equation}
\begin{alignedat}{2}\left\Vert u_{\delta,k_{b}}-u_{0,\infty}\right\Vert _{L^{\infty}(\Gamma_{b})} & \leq & C\left(\delta+\frac{1}{k_{b}}\right),\\
\left\Vert u_{\delta,k_{b}}-u_{0,\infty}\right\Vert
_{L^{\infty}(\Gamma_{s})} & \leq & C^\prime
\left(\delta+\frac{1}{k_{b}}\right),
\end{alignedat}
\label{eq:estimates-layer}
\end{equation}
 where $u_{\delta,k_{b}}$ and $u_{0,\infty}$ are solutions of (\ref{eq:governing_equation})
and (\ref{eq:asymptotic_equation}), respectively.\end{lem}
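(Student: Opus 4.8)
The plan is to argue entirely with the two integral representations already in hand. For $u_{\delta,k_{b}}$ I would use the single-layer decomposition of Lemma \ref{lemlatter}, and for $u_{0,\infty}$ the single-plus-double-layer decomposition of Lemma \ref{lem:decomposition_lemma_asymptotic}. Both share the same source part $H$, so the two estimates in (\ref{eq:estimates-layer}) reduce to comparing the refraction parts $\mathcal{S}_{s}\tilde{\varphi}_{s}+\mathcal{S}_{b}\varphi_{b}$ and $-\frac{1}{\xi}\mathcal{S}_{b}\varphi+\mathcal{D}_{b}\varphi$ on $\Gamma_{b}$ and on $\Gamma_{s}$, trace by trace (continuous value on $\Gamma_{b}$, exterior value on $\Gamma_{s}$). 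Since these refraction parts are harmonic across the skin and $\Gamma_{s}=\Psi_{\delta}(\Gamma_{b})$ sits at distance $\delta$ from $\Gamma_{b}$, I would convert density estimates into the desired $L^{\infty}$ bounds using the mapping properties of $\mathcal{S}_{\beta},\mathcal{D}_{\beta}$ (from $C^{0,\eta}$ densities to functions that are one-sided $C^{1,\eta}$ up to the boundary) together with a Taylor expansion through the $O(\delta)$ gap; the constants $C,C'$ are then governed by $\|g\|_{\mathcal{C}^{3,\eta}}$, as in Proposition \ref{pro:DL_operators}.

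The core step is to transport the exact system (\ref{eq:definition_phis_phib}) onto the fixed curve $\Gamma_{b}$ and match it, to first order, against the limiting equation (\ref{eq:decomposition_formula_asymptotic2}). Concretely, I would compose the $\Gamma_{s}$-equation with $\Psi_{\delta}$, set $\psi_{s}:=\tilde{\varphi}_{s}\circ\Psi_{\delta}$, and insert the operator expansions of Proposition \ref{pro:DL_operators} together with the elementary expansions $\lambda_{s}=-\frac{1}{2}-\frac{\delta}{\xi}+O(\delta^{2})$ and $\lambda_{b}=-\frac{1}{2}+O(\tfrac{1}{k_{b}})$ (the latter bounded using $k_{s}<1<k_{b}$, so that $|\lambda_{b}+\frac{1}{2}|=\tfrac{k_{s}}{k_{b}-k_{s}}\le\tfrac{1}{k_{b}-1}$). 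This yields a $2\times2$ system on $\Gamma_{b}$ for $(\psi_{s},\varphi_{b})$ whose principal coefficients are $\frac{1}{2}I\pm\mathcal{K}_{b}^{*}$ and whose remaining coefficients are operators carrying a factor $\delta$ or $\lambda_{\bullet}+\frac{1}{2}$. I would then recombine the two equations into a part that survives in the potential — governed by an operator invertible uniformly in $(\delta,k_{b})$ by Theorem \ref{thm:invertibility_lambda_K} and a Neumann-series perturbation once $\delta$ and $1/k_{b}$ are small — and a complementary part that, in the limit, reproduces precisely the double-layer term and the $\frac{1}{\xi}$ coefficient of (\ref{eq:decomposition_formula_asymptotic2}). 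Matching the two systems identifies the limiting density $\varphi$ and shows the density discrepancy is $O(\delta+\frac{1}{k_{b}})$ in $C^{1,\eta}$.

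The hardest part is the uniform control of this matching across the thin skin, a genuinely singular-perturbation issue: the skin supports the $O(1)$ effective-thickness potential drop $\xi\,\partial_{\nu}u|_{+}$ over a layer of width $\delta$, so the pair of single layers on the two nearby curves $\Gamma_{s},\Gamma_{b}$ must, in the limit, reorganize into the dipole layer $\mathcal{D}_{b}\varphi$ on $\Gamma_{b}$. Making this rigorous with an explicit $O(\delta+\frac{1}{k_{b}})$ remainder requires two things that I would treat with care: first, the $\delta$-expansions of $\mathcal{K}_{s}^{*},\mathcal{S}_{s}$ and of $\partial/\partial\nu$ in Proposition \ref{pro:DL_operators} must be applied with remainders uniform in the $\delta$-dependent densities, which is legitimate via the analyticity Theorem \ref{thm:analycity} once an a priori $C^{1,\eta}$-bound on $(\psi_{s},\varphi_{b})$ is secured; and second, the limiting operator $\frac{1}{\xi}(\frac{1}{2}I-\mathcal{K}_{b}^{*})+\partial_{\nu}\mathcal{D}_{b}$ governing (\ref{eq:decomposition_formula_asymptotic2}) must be shown invertible on $L_{0}^{2}(\Gamma_{b})$ with an inverse stable under the $O(\delta+\frac{1}{k_{b}})$ perturbation induced by the finite parameters. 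Granting these, I would finish by writing $u_{\delta,k_{b}}-u_{0,\infty}=(\mathcal{S}_{s}\tilde{\varphi}_{s}+\mathcal{S}_{b}\varphi_{b})-(-\frac{1}{\xi}\mathcal{S}_{b}\varphi+\mathcal{D}_{b}\varphi)$, inserting the density estimates, bounding the layer operators on $\Gamma_{b}$ and $\Gamma_{s}$, and absorbing the $O(\delta)$ produced by evaluating the $\delta$-independent limiting potentials on $\Gamma_{s}$ instead of $\Gamma_{b}$ through their one-sided $C^{1}$ regularity. This gives (\ref{eq:estimates-layer}) with constants independent of $\delta$ and $k_{b}$; the passage from the layers to all of $\mathbb{R}^{2}$ in Theorem \ref{thm:main-result} is then the maximum-principle argument deferred to the next subsection.
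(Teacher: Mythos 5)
Your plan is correct and rests on the same machinery as the paper's proof --- the decompositions of Lemmas \ref{lemlatter} and \ref{lem:decomposition_lemma_asymptotic}, transport of the $\Gamma_{s}$-equation to $\Gamma_{b}$ via $\Psi_{\delta}$, the operator expansions of Proposition \ref{pro:DL_operators}, and matching against (\ref{eq:decomposition_formula_asymptotic2}) --- but you organize the two small parameters differently. The paper handles $\delta$ and $1/k_{b}$ by \emph{iterated} limits: it posits formal series for the densities with an explicit leading $\delta^{-1}$ term, identifies orders to obtain (\ref{eq:phis+phib_ordre-1}), (\ref{eq:phis0+phib0}), (\ref{eq:definition_phis_-1}), passes to the intermediate limit $u_{0,k_{b}}$ solving Zribi's system (\ref{eq:asymptotic_equation_zribi}), then expands in $1/k_{b}$, and finally redoes the computation in the reverse order to check the limits commute. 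You instead propose a single joint perturbation argument (uniform invertibility plus a Neumann series in $(\delta,1/k_{b})$), which delivers the uniform $O(\delta+1/k_{b})$ remainder in one pass and makes explicit two points the paper leaves implicit: an a priori H\"{o}lder bound on the $\delta$-dependent densities so that the remainders in Proposition \ref{pro:DL_operators} are uniform, and the invertibility, stable under perturbation, of $\frac{1}{\xi}\left(\frac{1}{2}I-\mathcal{K}_{b}^{*}\right)+\partial_{\nu}\mathcal{D}_{b}$ on $L_{0}^{2}(\Gamma_{b})$ --- you are right that Theorem \ref{thm:invertibility_lambda_K} alone does not cover this hypersingular combination. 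Two refinements you should state explicitly to close your version: (i) the densities blow up like $\delta^{-1}$ (this is exactly how two single layers at distance $\delta$ produce the $O(1)$ dipole drop), so your ``density discrepancy $O(\delta+1/k_{b})$'' only makes sense for rescaled and recombined variables, e.g.\ comparing $\delta\psi_{s}$ with $\varphi$ and $\psi_{s}+\varphi_{b}$ with $\left(\tau-\frac{1}{\xi}\right)\delta\psi_{s}$ as in (\ref{eq:phis0+phib0}); the raw pair $(\psi_{s},\varphi_{b})$ does not converge; (ii) the principal operator $\frac{1}{2}I+\mathcal{K}_{b}^{*}$ that governs the sum of the densities is invertible only on mean-zero functions (Theorem \ref{thm:invertibility_lambda_K} with $\lambda=1/2$), so your recombination must track the mean-zero structure, which the neutrality condition (\ref{neutre}) guarantees.
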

\begin{proof} Only the first limit will be shown, the second one
being very similar. For this purpose, we must show that the limits
$\delta\rightarrow0$ and $k_{b}\rightarrow\infty$ are independent,
\emph{i.e.}, they commute. First, let us compute the limit of
$u_{\delta,k_{b}}$ when $\delta\rightarrow0$, and then the limit
$k_{b}\rightarrow\infty$ (which will be much easier). Then, we
will invert this process.

This first limit is the main problem in \cite[chapter
3]{zribilayer}, except that, in that study, $k_{b}=1$ and the
thickness of $\Omega_{s}$ is non-uniform. According to
theorem~\ref{thm:analycity}, the formulas in
\cite{crisoforis2004}, and by composition with the regular
diffeomorphism $\Psi_{\delta}$ from $\Gamma_{s}$ to $\Gamma_{b}$,
we have
\[
\left\Vert
\mathcal{S}_{s}\tilde{\varphi}_{s}-\mathcal{S}_{b}\varphi_{s}-\delta\left[\left(-\frac{1}{2}I+\mathcal{K}_{b}\right)\varphi_{s}-\mathcal{S}_{b}(\tau\varphi_{s})\right]
\right\Vert _{\mathcal{C}^{2,\eta}(\Gamma_{b})}\leq C\delta^{2},
\]
 where $\varphi_{s}:=\tilde{\varphi_{s}}\circ\Psi_{\delta}$. Hence,
with the help of the decomposition formula~(\ref{eq:decomposition_formula}),
we have the following asymptotic expansion uniformly on $\Gamma_{b}$:
\begin{equation}
u_{\delta,k_{b}}(x)=H(x)+\mathcal{S}_{b}(\varphi_{b}+\varphi_{s})(x)+\delta\left[\left(-\frac{1}{2}I+\mathcal{K}_{b}\right)\varphi_{s}(x)-\mathcal{S}_{b}(\tau\varphi_{s})\right](x)+O(\delta^{2}),\label{eq:DL_u}
\end{equation}
 We now look for expansions of the functions $\varphi_{s}$
and $\varphi_{b}$ when $\delta\rightarrow0$ that will be re-injected
in this equation. Using Proposition~\ref{pro:DL_operators} and (\ref{eq:definition_phis_phib}),
these functions are solutions of the following system:
\begin{multline}
\left\{ \begin{alignedat}{1}\frac{k_{s}}{k_{s}-1}\varphi_{s}-\left(\frac{1}{2}I+\mathcal{K}_{b}^{*}\right)(\varphi_{s}+\varphi_{b})+\delta\left[-\mathcal{K}_{b}^{(1)}\varphi_{s}-\mathcal{R}_{b}\varphi_{b}\right]+O(\delta^{1+\eta}) & =\\
\frac{\partial H}{\partial\nu}+\delta \big[\nu \cdot D^{2}H\nu \big]  +O(\delta^{2}),\\
\frac{k_{s}}{k_{s}-k_{b}}\varphi_{b}+\left(-\frac{1}{2}I+\mathcal{K}_{b}^{*}\right)(\varphi_{s}+\varphi_{b})
+\delta\mathcal{L}_{b}\varphi_{s}+O(\delta^{1+\eta}) &
=-\frac{\partial H}{\partial\nu}.
\end{alignedat}
\right.\label{eq:modified_system}
\end{multline}
 Let us define the formal asymptotic expansions:
\[
\left\{ \begin{alignedat}{1}\varphi_{s} & =\frac{1}{\delta}\varphi_{s}^{(-1)}+\varphi_{s}^{(0)}+\delta\varphi_{s}^{(1)}+\ldots,\\
\varphi_{b} & =\frac{1}{\delta}\varphi_{b}^{(-1)}+\varphi_{b}^{(0)}+\delta\varphi_{b}^{(1)}+\ldots.
\end{alignedat}
\right.
\]
 Aiming to have the $0$-order term in the expansion (\ref{eq:DL_u}),
here we seek for the terms of order $-1$ and $0$. By substitution
into (\ref{eq:modified_system}) and identification of the leading-order
terms in the first line, we get:
\[
\left(\frac{1}{2}I+\mathcal{K}_{b}^{*}\right)\left(\varphi_{s}^{(-1)}+\varphi_{b}^{(-1)}\right)=0,
\]
 so that, by Theorem~\ref{thm:invertibility_lambda_K}, we have:
\begin{equation}
\varphi_{s}^{(-1)}+\varphi_{b}^{(-1)}=0.\label{eq:phis+phib_ordre-1}
\end{equation}
 Let us now look at the $0$-order terms; summing the two lines, we
get:
\[
\frac{1}{\xi}\left(\varphi_{s}^{(-1)}+\frac{1}{k_{b}}\varphi_{b}^{(-1)}\right)+(\varphi_{s}^{(0)}+\varphi_{b}^{(0)})
+
\left[\mathcal{K}_{b}^{(1)}\varphi_{s}^{(-1)}+\mathcal{R}_{b}\varphi_{b}^{(-1)}-\mathcal{L}_{b}\varphi_{s}^{(-1)}\right]=0,
\]
 which gives, with the help of (\ref{eq:definition_K1_R_L}) and (\ref{eq:phis+phib_ordre-1}),
\begin{equation}
\varphi_{s}^{(0)}+\varphi_{b}^{(0)}=\left[\left(\frac{1}{k_{b}}-1\right)
\frac{1}{\xi}
+\tau\right]\varphi_{s}^{(-1)}.\label{eq:phis0+phib0}
\end{equation}
 This quantity is what we need in (\ref{eq:DL_u}); thus, only $\varphi_{s}^{(-1)}$
remains to be found. This can be done by identification of the
$0$-order terms in the first line of (\ref{eq:modified_system})
and using the definitions of $\mathcal{K}_{b}^{(1)}$ and
$\mathcal{R}_{b}$ given by (\ref{eq:definition_K1_R_L}):
\begin{equation}
\frac{1}{\xi}\varphi_{s}^{(-1)}+\frac{1}{\xi}\left(\frac{1}{k_{b}}-1\right)\left(\frac{1}{2}I+\mathcal{K}_{b}^{*}\right)\varphi_{s}^{(-1)}+\frac{\partial\mathcal{D}_{b}\varphi_{s}^{(-1)}}{\partial\nu}=-\frac{\partial
H}{\partial\nu}.\label{eq:definition_phis_-1}
\end{equation}
 Finally, the expansion (\ref{eq:DL_u}) yields:
\begin{equation}
u_{\delta,k_{b}}(x)=H(x)+\left[\frac{1}{\xi}\left(\frac{1}{k_{b}}-1\right)\mathcal{S}_{s}+\left(-\frac{1}{2}I+\mathcal{K}_{b}\right)\right]\varphi_{s}^{(-1)}(x)+O(\delta).\label{eq:u_delta_sigma_ordre1}
\end{equation}
 This leading-order term (denoted $u_{0,k_{b}}$) verifies (\ref{eq:asymptotic_equation_zribi})
according to (\ref{eq:definition_phis_-1}) and jump formulas~(\ref{eq:jump_formulas}).

The asymptotic $k_{b}\rightarrow\infty$ does not add further
difficulty. Indeed, let us define the following asymptotic:
\[
\varphi_{s}^{(-1)}=\varphi_{s}^{(-1,0)}+\frac{1}{k_{b}}\varphi_{s}^{(-1,1)}+\ldots.
\]
 By substitution into equation (\ref{eq:definition_phis_-1}) and
identification of the leading-order terms, we get:
\[
\frac{1}{\xi}\left(\frac{1}{2}I-\mathcal{K}_{b}^{*}\right)\varphi_{s}^{(-1,0)}+\frac{\partial\mathcal{D}_{b}\varphi_{s}^{(-1,0)}}{\partial\nu}=-\frac{\partial H}{\partial\nu},
\]
 and then the expansion~(\ref{eq:u_delta_sigma_ordre1}) becomes:
\begin{equation}
u_{\delta,k_{b}}(x)=H(x)-\frac{1}{\xi}\mathcal{S}_{s}\varphi_{s}^{(-1,0)}(x)+\left(-\frac{1}{2}I+\mathcal{K}_{b}\right)\varphi_{s}^{(-1,0)}(x)+O\left(\delta\right),\label{eq:DL_u_final}
\end{equation}
 which is (\ref{eq:decomposition_formula_asymptotic}) applied on
$\Gamma_{b}$ according to the jump formula of
$\mathcal{D}_{b}$~(\ref{eq:jump_formulas}). Hence, according to
lemma~\ref{lem:decomposition_lemma_asymptotic}, the first-order
asymptotic of $u_{\delta,k_{b}}$ is $u_{0,\infty}$.

Let us now show that the limits $\delta\rightarrow0$ and
$k_{b}\rightarrow\infty$ commute: unlike in the previous
subsection, we will first perform the limit
$k_{b}\rightarrow\infty$ and then the limit $\delta\rightarrow0$.
Given the fact that
\[
\lambda_{b}=-\frac{1}{2}+O\left(\frac{1}{k_{b}}\right),
\]
 the definition of $\tilde{\varphi_{s}}$ and $\varphi_{b}$ in (\ref{eq:definition_phis_phib})
will be affected only in the second line. Indeed, with the following
expansions:
\[
\left\{ \begin{alignedat}{1}\tilde{\varphi_{s}} & =\tilde{\varphi}_{s}^{(0)}+\frac{1}{k_{b}}\tilde{\varphi}_{s}^{(1)}+\ldots,\\
\varphi_{b} & =\varphi_{b}^{(0)}+\frac{1}{k_{b}}\varphi_{b}^{(1)}+\ldots,
\end{alignedat}
\right.
\]
 this second line becomes, at the leading order:
\[
\left(-\frac{1}{2}I+\mathcal{K}_{b}^{*}\right)(\varphi_{s}^{(0)}+\varphi_{b}^{(0)})+\delta\mathcal{L}_{b}\varphi_{s}^{(0)}+O(\delta^{1+\eta})=-\frac{\partial H}{\partial\nu},
\]
 where $\varphi_{s}^{(0)}:=\tilde{\varphi_{s}}^{(0)}\circ\Psi_{\delta}$.
With the expansion:
\[
\left\{ \begin{alignedat}{1}\varphi_{s}^{(0)} & =\frac{1}{\delta}\varphi_{s}^{(0,-1)}+\varphi_{s}^{(0,0)}+\delta\varphi_{s}^{(0,1)}+\ldots,\\
\varphi_{b}^{(0)} & =\frac{1}{\delta}\varphi_{b}^{(0,-1)}+\varphi_{b}^{(0,0)}+\delta\varphi_{b}^{(0,1)}+\ldots,
\end{alignedat}
\right.
\]
 the identifications (\ref{eq:phis+phib_ordre-1}), (\ref{eq:phis0+phib0}),
and (\ref{eq:definition_phis_-1}) respectively become:
\begin{equation}
\varphi_{s}^{(0,-1)}+\varphi_{b}^{(0,-1)}=0,\label{eq:phis+phib_ordre-1(bis)}
\end{equation}

\begin{equation}
\varphi_{s}^{(0,0)}+\varphi_{b}^{(0,0)}=\left[\tau-\frac{1}{\xi}\right]\varphi_{s}^{(0,-1)},\label{eq:phis0+phib0(bis)}
\end{equation}

\begin{equation}
\frac{1}{\xi}\left(\frac{1}{2}I-\mathcal{K}_{b}^{*}\right)\varphi_{s}^{(0,-1)}+\frac{\partial\mathcal{D}_{b}\varphi_{s}^{(0,-1)}}{\partial\nu}=-\frac{\partial H}{\partial\nu}.\label{eq:definition_phis_-1(bis)}
\end{equation}
 Finally, recalling that the expansion of $u_{\delta,k_{b}}$ in (\ref{eq:DL_u})
is conductivity-independent, we obtain the same expansion (\ref{eq:DL_u_final}).
\end{proof}

\subsubsection*{Proof of Theorem~\ref{thm:main-result}}

With the estimates (\ref{eq:estimates-layer}) on the layers $\Gamma_{b}$
and $\Gamma_{s}$, we are now ready to prove the estimate (\ref{eq:estimate-general})
on the whole space applying the maximum principle.

For the sets $\Omega_{b}$ and $\Omega_{s}$, it is straightforward:
the function $u_{\delta,k_{b}}-u_{0,\infty}$ is harmonic in these
bounded domains, so the maximum is reached on the boundaries~\cite{taylor1}.
And, since this maximum is dominated by $\delta$ and $1/k_{b}$,
we have:
\[
\left\Vert u_{\delta,k_{b}}-u_{0,\infty}\right\Vert _{L^{\infty}(\bar{\Omega_{b}}
\cup\Omega_{s})}\leq C\left(\delta+\frac{1}{k_{b}}\right).
\]
For the exterior domain, we cannot apply directly the maximum
principle since this domain is unbounded. However, the conditions
at infinity in the systems (\ref{eq:governing_equation}) and
(\ref{eq:asymptotic_equation}) allow us to have a similar control.
Indeed, this condition tells us that
\begin{equation}
\left\Vert u_{\delta,k_{b}}-u_{0,\infty}\right\Vert
_{L^{\infty}(B(0,R))}=O(R^{-1}).\label{eq:infinite_condition_error}
\end{equation}
 We take:
\[
\varepsilon:=\frac{1}{2}\left\Vert u_{\delta,k_{b}}-u_{0,\infty}\right\Vert _{L^{\infty}(\bar{\Omega_{b}}\cup\Omega_{s})},
\]
 and choose $R_{0}$ such that, for $R\geq R_{0}$, the right-hand side of
(\ref{eq:infinite_condition_error}) is bounded by $\varepsilon$.
Then, we have:
\[
\left\Vert u_{\delta,k_{b}}-u_{0,\infty}\right\Vert
_{L^{\infty}(\mathbb{R}^{2}\setminus B(0,R_{0}))}\leq\varepsilon.
\]
 Now, only the bounded domain $B(0,R_{0})\setminus\left(\overline{\Omega}_{b}\cup\Omega_{s}\right)$
remains, where we can apply the maximum principle. Thus, Theorem~\ref{thm:main-result}
is proved.

\subsection{Final formulation and notation}

In the previous subsections, we have performed a multi-scale
analysis of the problem to identify the effective equations with
boundary conditions. In order to make things clear, let us
summarize the results and simplify the notation.

The electric potential emitted by the fish is the solution of the
complex-conductivity equation (\ref{eq:EQS-final}) with boundary
conditions given by the system (\ref{eq:asymptotic_equation}). It
is easy to see that in the case of an inhomogeneity outside the
body, these boundary conditions will not be changed because the
asymptotics are done with the layer potentials of the domains
defining the fish.

Hence, we conclude this section by summing up the results:
omitting all the subscripts, the electric potential $u$ is the
solution of the system
\begin{equation}
\left\{ \begin{alignedat}{2}\Delta u & = {f}, & \,
\, x\in \Omega,\\
\nabla\cdot (1+ (k -1 +i\varepsilon\omega) \chi_D) \nabla u & = 0,
& \,
\, x\in\mathbb{R}^{2}\setminus\overline{\Omega},\\
{} u \big|_+ - u \big|_- -\xi\left.\frac{\partial u}{\partial\nu}\right|_{+} & =0, & \,\, x\in\Gamma,\\
\left.\frac{\partial u}{\partial\nu}\right|_{-} & =0, & \,\, x\in\Gamma,\\
\left|u\right| & = {O}(\left|x\right|^{-1}), &
\,\,\left|x\right|\rightarrow\infty,\text{ uniformly in }\hat{x},
\end{alignedat}
\right.\label{eq:direct_problem-final}
\end{equation}
where $\chi_D$ is the characteristic function of the target $D$,
$k+ i \varepsilon \omega$ is the conductivity inside $D$, $\omega$
is the frequency, and $k$ and $\varepsilon$ are positive
constants. Here, we have assumed that $\sum_j \alpha_j=0$. In the
case where it is not, we should replace the boundary condition
$\frac{\partial u}{\partial\nu}\big|_{-} =0$ on $\Gamma$ with
$\frac{\partial u}{\partial\nu}\big|_{-} = ({1}/{|\Gamma|}) \sum_j
\alpha_j$. From now on, we restrict ourselves to the case $\sum_j
\alpha_j =0$. Note that taking two points $z_1$ and $z_2 \in
\Omega$ close enough and $\alpha_1 = - \alpha_2 \neq 0$ yields an
approximation of a dipole at $(z_1+z_2)/2$ of moment $|\alpha_1|$
and direction orthogonal to $(z_1-z_2)$.

\section{Detection algorithm for multi-frequency measurements}

\label{sec:detection_algo}

In this section, we develop an algorithm to recover (from a single
measurement) the location of a small object located far away from
the fish. This algorithm is based on multi-frequency measurements,
as it is explained in subsection~\ref{sub:multifreq}. In
subsection~\ref{sub:dipolar-expansion}, asymptotic expansions will
be carried out for the electric field in the presence of a small
and distant target. Finally, the algorithm will be explained in
detail in subsection~\ref{sub:algorithm}.

\subsection{Multi-frequency measurements}

\label{sub:multifreq}

Let us suppose that the electric current produced by the electric
organ, (\emph{i.e.}, the source term $f$ in equation
(\ref{eq:direct_problem-final})) is periodically time-dependent
with separation of variables, that is
\[
f(x,t)={f}(x)h(t),
\]
 where ${f}$ is a sum of Dirac functions and $h(t)$
is periodic with fundamental frequency $\omega_{0}$. Hence, we set
\begin{equation}
h(t)=\sum_{n=1}^N h_{n}e^{in\omega_{0}t},\label{eq:formule-h}
\end{equation}
 where $N$ is the upper bound that ensures the low-frequency regime.
According to the previous section, the electric potential $u$ is
then given by
\begin{equation} \label{sumu}
u(x,t)=\sum_{n=1}^N h_{n}u_{n}e^{in\omega_{0}t},
\end{equation}
 where $u_{n}$, for $n=1,\ldots, N$,  is solution of the following system
\begin{equation}
\left\{ \begin{alignedat}{2} \Delta u_n & = {f}, & \,
\, x\in \Omega,\\
\nabla\cdot (1+ (k -1 +i\varepsilon n \omega_0) \chi_D) \nabla u_n
& = 0, & \,
\, x\in\mathbb{R}^{2}\setminus\overline{\Omega},\\
{} u_{n} \big|_+ - u_{n}  \big|_- - \xi\left.\frac{\partial u_{n}}{\partial\nu}\right|_{+} & =0, & \,\, x\in\Gamma,\\
\left.\frac{\partial u_{n}}{\partial\nu}\right|_{-} & =0, & \,\, x\in\Gamma,\\
\left|u_{n}\right| & = {O}(\left|x\right|^{-1}), &
\,\,\left|x\right|\rightarrow\infty,\text{ uniformly in }\hat{x},
\end{alignedat}
\right.\label{eq:complex_neumann_problem}
\end{equation}

\subsection{A dipolar expansion in the presence of a target}

\label{sub:dipolar-expansion}

In this subsection, we derive useful formulas in order to simplify
the data. For the sake of simplicity, and for numerical reasons
that will be given in section \ref{sec:numeric}, only one target
$D$ will be considered.

The electroreceptors of the fish measure the electric current at
the surface of the skin \cite{moller1995}. Hence, from a single
measurement, we can construct the Space-Frequency Response (SFR)
matrix $A$, whose terms are given by
\[
A_{ln}=\left(\left.\frac{\partial
u_{n}}{\partial\nu}\right|_{+}-\left.\frac{\partial
U}{\partial\nu}\right|_{+}\right)(x_{l}),\,\,\textrm{ for } 1\leq
n\leq N\textrm{ and }1\leq l\leq L,
\]
 where $\left(x_{l}\right)_{1\leq l\leq L}$ are points on the boundary
$\Gamma$ and $U$ is the static background solution, \emph{i.e.},
the electric potential without any target which does not depend on
$n$. It is the solution of (\ref{eq:complex_neumann_problem}) with
a constant conductivity equal to $1$ outside the body $\Omega$.

The first formula, given in Proposition \ref{propos2}, is often
called a \emph{dipolar expansion}; indeed, in the presence of a
small inhomogeneity, the perturbation of the electric potential
looks like the electric potential of a dipole
\cite{ammari2004reconstruction, cedio1998identification}. More
precisely, using exactly the same arguments as in \cite[Chapter
4]{ammari2004reconstruction} and in \cite{ammarisima02} we have
the following result.

\begin{prop} \label{propos2}

If $D:=z+\alpha B$ with ${\rm dist}(z,\Gamma)\gg1$, $\alpha\ll1$
and $B$ is an open set, then we have
\begin{equation}
A_{ln}\simeq-\alpha^{2}\nabla U(z)^T M(k_{n},B)
\nabla_{z}\left(\left.\frac{\partial
G_{R}}{\partial\nu_{x}}\right|_{+}\right)(x_{l},z),\label{eq:SFR-first-approx}
\end{equation}
 where $T$ denotes the transpose, $k_{n}= k + i \varepsilon \omega_0 n$ is the (complex) conductivity of the
 target at the
frequency $n\omega_{0}$, $M(k_n,B) = (M_{\alpha
\beta}(k_n,B))_{\alpha, \beta=1,2}$ is the first-order
polarization tensor associated to $B$ with conductivity $k_n$
\cite{ammari2007polarization}:
$$
M_{\alpha \beta}(k_n,B):= \int_{\partial B} (\lambda_n I -
\mathcal{K}_{B}^{*})^{-1}(\nu_\alpha) y_\beta \, ds(y), \quad
\lambda_n:= \frac{k_n +1}{2(k_n -1)}, \quad \alpha, \beta=1,2,
$$
 and $G_{R}$ is the Green function
associated to Robin boundary conditions, which is defined for
$z\in \mathbb{R}^{2}\setminus\overline{\Omega}$ by
\begin{equation}
\left\{ \begin{alignedat}{2}-\Delta_{x}G_{R}(x,z) & =\delta_{z}(x), & \,\, x\in\mathbb{R}^{2}\setminus\overline{\Omega},\\
{}\left. G_{R} \right|_{+} -  \xi\left.\frac{\partial G_{R}}{\partial\nu_{x}}\right|_{+} & =0, & \,\, x\in\Gamma,\\
\left|G_{R} + \frac{1}{2\pi} \log |x| \right| & =
{O}(\left|x\right|^{-1}), &
\,\,\left|x\right|\rightarrow\infty,\text{ uniformly in }\hat{x}.
\end{alignedat}
\right.\label{eq:green-fonction-robin}
\end{equation}
\end{prop}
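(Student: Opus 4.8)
The plan is to follow the small-volume asymptotic framework of \cite[Chapter 4]{ammari2004reconstruction} and \cite{ammarisima02}, adapted to the exterior geometry whose skin boundary conditions on $\Gamma$ are built into $G_R$. First I would isolate the perturbation $w_n:=u_n-U$. Inside $\Omega$ both $u_n$ and $U$ solve $\Delta\,\cdot=f$ with vanishing interior normal derivative on $\Gamma$, so $w_n$ is constant in $\Omega$; normalizing this constant to zero gives $w_n|_-=0$ on $\Gamma$, and the skin condition in \eqref{eq:complex_neumann_problem} then reduces to the homogeneous Robin condition $w_n|_+-\xi\,\partial_\nu w_n|_+=0$ on $\Gamma$. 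In the water, $w_n$ is harmonic away from $D$, carries across $\partial D$ exactly the transmission jump produced by the contrast $k_n-1$, and decays at infinity. These are precisely the conditions for which $G_R$ in \eqref{eq:green-fonction-robin} is the natural kernel, so I would represent $w_n(x)=\int_{\partial D}G_R(x,y)\phi_n(y)\,ds(y)$ for a density $\phi_n\in L^2(\partial D)$. Writing $G_R(x,y)=-G(x-y)+R(x,y)$ with $R$ harmonic in $x$ through a neighborhood of $D$ and smooth in $y$ away from $\Gamma$, the jump relations \eqref{eq:jump_formulas} across $\partial D$ are those of the free-space kernel (up to the smooth, lower-order contribution of $R$), and imposing the flux continuity $\partial_\nu(U+w_n)|_+=k_n\,\partial_\nu(U+w_n)|_-$ on $\partial D$ yields the integral equation $(\lambda_n I-\mathcal{K}_D^*)\phi_n=-\,\partial_\nu U|_{\partial D}$ up to lower-order smooth terms from $R$, with $\lambda_n=(k_n+1)/(2(k_n-1))$.

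Next I would exploit $D=z+\alpha B$ with ${\rm dist}(z,\Gamma)\gg1$: near $z$ both $U$ and $R(x,\cdot)$ are smooth, so Taylor expanding $U(y)=U(z)+\nabla U(z)\cdot(y-z)+O(\alpha^2)$ the right-hand side of the integral equation becomes $-\nabla U(z)\cdot\nu+O(\alpha)$. Rescaling $y=z+\alpha\tilde y$ with $\tilde y\in\partial B$ and using the scale invariance of $\mathcal{K}^*$, the leading density solves $(\lambda_n I-\mathcal{K}_B^*)\tilde\phi=-\nabla U(z)\cdot\nu$ on $\partial B$. The invertibility required here is the complex-conductivity counterpart of Theorem \ref{thm:invertibility_lambda_K}: since ${\rm Im}\,k_n=\varepsilon\omega_0 n>0$, the number $\lambda_n$ lies strictly off the real axis, hence off the (real) spectrum of $\mathcal{K}_B^*$, so $\lambda_n I-\mathcal{K}_B^*$ is invertible. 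Inserting $\phi_n$ back and Taylor expanding $G_R(x,y)$ in its second variable about $z$, the monopole term drops out because $\int_{\partial B}(\lambda_n I-\mathcal{K}_B^*)^{-1}(\nu_\alpha)=0$ (apply $\int_{\partial B}$ and use $\int_{\partial B}\nu_\alpha=0$, $\int_{\partial B}\mathcal{K}_B^*\psi=\frac12\int_{\partial B}\psi$, and $\lambda_n\neq\frac12$); this also restores the $O(|x|^{-1})$ decay. The surviving dipole term has moment $\int_{\partial D}(y-z)\phi_n\,ds=-\alpha^2 M(k_n,B)\nabla U(z)$, where the two powers of $\alpha$ come from the surface measure and from the first moment $(y-z)$, and $M(k_n,B)$ is exactly the polarization tensor of Proposition \ref{propos2}. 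Hence, for $x\in\Gamma$, $w_n(x)=-\alpha^2\,\nabla U(z)^T M(k_n,B)\nabla_z G_R(x,z)+o(\alpha^2)$, using the symmetry of $M$.

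Finally I would take the outward normal derivative at $x=x_l$ and commute it with $\nabla_z$, which produces \eqref{eq:SFR-first-approx}, namely $A_{ln}=\partial_{\nu_x}w_n|_+(x_l)\simeq-\alpha^2\,\nabla U(z)^T M(k_n,B)\nabla_z(\partial_{\nu_x}G_R|_+)(x_l,z)$. The main obstacle is not the algebra but the uniform control of the remainder: one must show that the errors from truncating the Taylor expansions of $U$ and $G_R(x,\cdot)$ and from the smooth part $R$ of the kernel are genuinely $o(\alpha^2)$, uniformly in $n=1,\dots,N$ and in $x_l\in\Gamma$. This rests on the uniform boundedness of $(\lambda_n I-\mathcal{K}_B^*)^{-1}$ on $L^2(\partial B)$, which holds because ${\rm dist}(\lambda_n,{\rm spec}\,\mathcal{K}_B^*)\geq|{\rm Im}\,\lambda_n|$ is bounded below for the finitely many indices $1\leq n\leq N$, and on the smoothness and boundedness of $G_R(x,z)$ and its $z$-derivatives for $x\in\Gamma$ and $z$ in a compact set away from $\Gamma$, both of which follow from \eqref{eq:green-fonction-robin}. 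With these estimates in hand the argument is identical in spirit to \cite[Chapter 4]{ammari2004reconstruction} and \cite{ammarisima02}.
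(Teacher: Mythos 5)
Your argument is correct in substance and runs on the same small-volume machinery as the paper's proof: both reduce the perturbation to a density on $\partial D$ of the form $(\lambda_n I-\mathcal{K}_{\partial D}^{*})^{-1}$ applied to the normal derivative of a field agreeing with $U$ near $z$ to higher order, both kill the monopole via $\int_{\partial B}(\lambda_n I-\mathcal{K}_{B}^{*})^{-1}(\nu_\alpha)\,ds=0$, and both extract the dipole moment $-\alpha^{2}M(k_n,B)\nabla U(z)$ by rescaling and Taylor-expanding $G_R$. Where you differ is in how the integral identity is set up. The paper introduces the auxiliary function $H_n=-\mathcal{S}_\Gamma(\partial u_n/\partial\nu|_+)+\mathcal{D}_\Gamma(u_n|_+)$ and combines the two representations $u_n-U=-(k_n-1)\int_D\nabla u_n\cdot\nabla G_R$ and $u_n-H_n=-(k_n-1)\int_D\nabla u_n\cdot\nabla G$ with the flux condition on $\partial D$ to obtain the \emph{exact} formula (\ref{scaling}); all of the approximation is then concentrated in the single estimate $\|\nabla H_n-\nabla U\|_{L^\infty(D)}\le C\alpha^2$. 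You instead posit a single-layer ansatz with kernel $G_R$ and treat the smooth reflected part $R=G_R+G$ perturbatively; this works, your bookkeeping of the resulting $O(\alpha)$ perturbation of the integral equation is right, and your invertibility discussion can even be simplified: since
\begin{equation*}
|\lambda_n|^{2}-\tfrac14=\mathrm{Re}\,\frac{1}{k_n-1}+\Bigl|\frac{1}{k_n-1}\Bigr|^{2}=\frac{k}{|k_n-1|^{2}}>0,
\end{equation*}
one has $|\lambda_n|>\tfrac12$ for every $n$, so the complex-$\lambda$ version of Theorem \ref{thm:invertibility_lambda_K} applies directly and uniformly for $1\le n\le N$, with no spectral-distance argument (which is in any case delicate for the non-normal operator $\mathcal{K}_B^{*}$) needed.

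One step should be repaired: you cannot ``normalize'' the interior constant $c_n:=w_n|_{-}$ to zero, because $u_n$ and $U$ are each already pinned down by their systems, so $c_n$ is determined, and your single-layer representation (which automatically satisfies the \emph{homogeneous} Robin condition on $\Gamma$) is only legitimate once $c_n=0$ is proved. Fortunately the decay condition forces it: the difference $d$ between $w_n$ and your ansatz satisfies $\nabla\cdot(1+(k_n-1)\chi_D)\nabla d=0$ outside $\Omega$, the Robin condition $d|_{+}-\xi\,\partial_\nu d|_{+}=c_n$ on $\Gamma$, and $d=O(|x|^{-1})$, hence has zero net flux through $\Gamma$; the energy identity then yields
\begin{equation*}
\int_{\mathbb{R}^{2}\setminus\overline{\Omega}}\bigl(1+(k_n-1)\chi_D\bigr)|\nabla d|^{2}
+\xi\int_{\Gamma}\bigl|\partial_\nu d|_{+}\bigr|^{2}
=-\,\overline{c_n}\int_{\Gamma}\partial_\nu d|_{+}=0,
\end{equation*}
and taking the real part (using $\mathrm{Re}\,k_n=k>0$) gives $d\equiv0$, whence the Robin condition forces $c_n=0$. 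It is worth observing that the paper's route through $H_n$ sidesteps this issue entirely: in the Green identity behind $u_n-U=-(k_n-1)\int_D\nabla u_n\cdot\nabla G_R$, the boundary term generated by the constant is $c_n\int_\Gamma\partial_\nu G_R|_{+}$, which vanishes because the pole of $G_R$ lies outside $\Omega$ and $G_R\sim-\frac{1}{2\pi}\log|x|$ at infinity, so the total flux of $G_R$ through $\Gamma$ is zero. With that point patched, your proof is a sound variant of the paper's.
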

\begin{proof} Let
$$
H_n = - \mathcal{S}_\Gamma(\frac{\partial u_n}{\partial
\nu}\big|_+) + \mathcal{D}_\Gamma (u_n \big|_+).
$$
We have
$$
u_n - U = - (k_n -1) \int_D \nabla u_n \cdot \nabla G_R,
$$
and on the other hand,
$$
u_n - H_n = - (k_n -1) \int_D \nabla u_n \cdot \nabla G.
$$
From the transmission condition
$$
\frac{\partial u_n}{\partial \nu}\big|_+ -  k_n \frac{\partial
u_n}{\partial \nu}\big|_- = 0 \quad \mbox{on } \partial D,
$$
it follows that
\begin{equation} \label{scaling}
u_n - U = \int_{\partial D} (\lambda_n I - \mathcal{K}_{\partial
D}^{*})^{-1}(\frac{\partial H_n}{\partial \nu}) G_R.
\end{equation}
Since
$$
\| \nabla H_n - \nabla U\|_{L^\infty(D)} \leq C \alpha^2,
$$
for some constant $C$, provided that ${\rm dist}(D, \partial
\Omega)\gg \alpha$, a scaling of the integral in (\ref{scaling})
together with a Taylor expansion of $G_R$ gives the desired
asymptotic expansion. Note that the approximation in
(\ref{eq:SFR-first-approx}) is uniform in $l$ and $k_n$
\cite{ammari2007polarization}.
\end{proof}

Now, we will carry on a second formula in order to simplify this
equation. Indeed, the Green function associated to Robin boundary
conditions is tedious to compute. Instead, we will post-process
the data thanks to the following lemma which generalizes Lemma
2.15 in \cite{ammari2004reconstruction}.

\begin{lem} \label{lemgreen}
Let  $G$ denote the Green function in the free space defined by
(\ref{defG}). For $z\in\mathbb{R}^{2}\setminus \overline{\Omega}$
and $x\in\Gamma$, let $
$$G_{z}(x)=G(x-z)$ and $G_{R,z}(x)=G_{R}(x-z)$. Then
\[
\left(\frac{1}{2}I-\mathcal{K}_{\Gamma}^{*}
-\xi\frac{\partial\mathcal{D}_{\Gamma}}{\partial\nu}\right)\left(\frac{\partial
G_{R,z}}{\partial\nu_{x}}\right)(x)= -\frac{\partial G_{z}
}{\partial\nu_{x}}(x).
\]
\end{lem}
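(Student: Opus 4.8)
The plan is to represent the Robin Green function $G_{R,z}$ in the unbounded domain $\mathbb{R}^{2}\setminus\overline{\Omega}$ through its own Cauchy data on $\Gamma$ and then use the boundary condition to reduce everything to the single unknown $g_z := \partial G_{R,z}/\partial\nu_x\big|_+$. Concretely, I would apply Green's representation formula to $v:=G_{R,z}$ on $(\mathbb{R}^{2}\setminus\overline{\Omega})\cap B(0,R)$, using that $-\Delta_x G_{R,z}=\delta_z$ and that $-G$ is the fundamental solution of $-\Delta$ by (\ref{defG}). Since the outward normal to the exterior domain on $\Gamma$ is $-\nu$, the boundary contribution on $\Gamma$ produces precisely a single-layer potential of $g_z$ and a double-layer potential of the Dirichlet trace $G_{R,z}\big|_+$, so that
\[
G_{R,z}(x) = -G(x-z) - \mathcal{D}_\Gamma\big(G_{R,z}\big|_+\big)(x) + \mathcal{S}_\Gamma(g_z)(x), \qquad x\in\mathbb{R}^{2}\setminus\overline{\Omega}.
\]

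The next step is to invoke the Robin condition in (\ref{eq:green-fonction-robin}), namely $G_{R,z}\big|_+ = \xi\,g_z$ on $\Gamma$, which eliminates the Dirichlet trace and leaves a representation of $G_{R,z}$ in terms of $g_z$ alone. I would then take the exterior normal-derivative trace $\partial/\partial\nu_x\big|_+$ of this representation along $\Gamma$ and substitute the jump relations (\ref{eq:jump_formulas}): the single-layer term contributes $(\tfrac12 I+\mathcal{K}_\Gamma^{*})g_z$, the double-layer term contributes the (two-sided continuous) hypersingular operator $\partial\mathcal{D}_\Gamma/\partial\nu$ applied to $g_z$, the source term contributes $-\partial G_z/\partial\nu_x$, and the left-hand side is $g_z$ itself. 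Collecting the $g_z$-terms and rearranging then yields the claimed identity; the $\xi\to0$ specialization reduces it to the Neumann-function relation of Lemma 2.15 in \cite{ammari2004reconstruction}, which is a useful consistency check.

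I expect the main obstacle to be the justification of the representation formula in two dimensions, where $G$ grows like $\log|x|$: one must show that the circular boundary term $\int_{\partial B(0,R)}\big(v\,\partial_{\hat{x}}w - w\,\partial_{\hat{x}}v\big)$ has a limit as $R\to\infty$ and does not spoil the identity. Here the two logarithmically divergent pieces cancel at leading order, and the far-field normalization $|G_R+\tfrac{1}{2\pi}\log|x||=O(|x|^{-1})$ in (\ref{eq:green-fonction-robin}) controls the remainder, provided the total-flux bookkeeping (the unit charge at $z$) is tracked so that no spurious constant survives. A secondary but genuine source of care is the sign bookkeeping coming from the orientation of $\nu$ on the exterior side and from the convention for $\partial\mathcal{D}_\Gamma/\partial\nu$, which is exactly what fixes the signs of the $\mathcal{K}_\Gamma^{*}$ and $\xi$ terms in the final operator.
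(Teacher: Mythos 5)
Your overall route --- represent $G_{R,z}$ in the exterior domain by Green's formula, eliminate the Dirichlet trace through the Robin condition $G_{R,z}\big|_+=\xi\,\partial G_{R,z}/\partial\nu\big|_+$, and take the exterior normal-derivative trace using (\ref{eq:jump_formulas}) --- is in substance the same as the paper's proof. The paper arrives at an equivalent single-plus-double-layer representation of $G_{R,z}$ not via Green's identity but by ``employing the same argument as in Lemma~\ref{lem:decomposition_lemma_asymptotic}'', i.e.\ a layer-potential ansatz with density $\varphi=\xi\,\partial_\nu G_{R,z}|_+$, and then likewise concludes by taking the normal derivative. Your treatment of the $R\to\infty$ boundary term in two dimensions and the flux bookkeeping (which yields $\int_\Gamma \partial_\nu G_{R,z}|_+=0$ and makes the additive constant in the representation harmless, since it is annihilated by $\partial_\nu$) is more careful than anything the paper writes down.

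There is, however, one step where your ``collecting the $g_z$-terms and rearranging yields the claimed identity'' is not innocuous: the sign of the hypersingular term. From your representation $G_{R,z}=-G_z-\xi\mathcal{D}_\Gamma g_z+\mathcal{S}_\Gamma g_z$ (up to a constant), the trace relations (\ref{eq:jump_formulas}) give
\begin{equation*}
g_z=-\frac{\partial G_z}{\partial\nu}-\xi\frac{\partial\mathcal{D}_\Gamma}{\partial\nu}g_z+\Bigl(\frac{1}{2}I+\mathcal{K}_\Gamma^{*}\Bigr)g_z,
\qquad\text{i.e.}\qquad
\Bigl(\frac{1}{2}I-\mathcal{K}_\Gamma^{*}+\xi\frac{\partial\mathcal{D}_\Gamma}{\partial\nu}\Bigr)g_z=-\frac{\partial G_z}{\partial\nu},
\end{equation*}
with a \emph{plus} sign in front of $\xi$, whereas the statement carries a minus sign. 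With the paper's literal definitions ($G=\frac{1}{2\pi}\log|\cdot|$ in (\ref{defG}) and the jump relations as stated), the plus-sign version is the correct one: on the unit circle, for a mode $\cos n\theta$ with $n\geq 1$, $\mathcal{K}_\Gamma^{*}$ annihilates it while $\partial\mathcal{D}_\Gamma/\partial\nu$ acts as multiplication by $n/2$, and solving (\ref{eq:green-fonction-robin}) in Fourier series for $z=(d,0)$, $d>1$, gives the mode amplitudes $\partial_\nu G_{R,z}|_+\sim d^{-n}/\bigl(\pi(1+\xi n)\bigr)$ and $-\partial_\nu G_z\sim d^{-n}/(2\pi)$, which match only the plus-sign identity. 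So either you silently flipped a sign when rearranging, or you must state explicitly that $\partial\mathcal{D}_\Gamma/\partial\nu$ is understood with the opposite sign convention --- which is apparently what the paper does implicitly, since its integration-by-parts formula (\ref{eq:hypersingular-integration-by-parts}) also holds under the literal definitions only after replacing $\partial\mathcal{D}_\Gamma/\partial\nu$ by its negative. As written, your computation would not reproduce the printed statement; once the convention is fixed consistently (in either direction), your argument is complete and correct, and it is essentially the paper's own.
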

\begin{proof} Employing the same argument as in Lemma \ref {lem:decomposition_lemma_asymptotic} yields
$$
G_{R,z}= - G_z + \frac{1}{\xi}(\mathcal{S}_{\Gamma}\varphi) +
\mathcal{D}_{\Gamma}\varphi  - \mathcal{S}_{\Gamma}
(\frac{\partial G_{z} }{\partial\nu}) - \xi \mathcal{D}_\Gamma
(\frac{\partial G_{z} }{\partial\nu}) ,$$ where $\varphi= \xi
\frac{\partial G_{R,z}}{\partial \nu} \big|_+$. Therefore, taking
the normal derivative of the above identity and using the trace
relations (\ref{eq:jump_formulas}) give the result.
\end{proof}

Hence, after a calculation of $\left.\frac{\partial
u_{n}}{\partial\nu}\right|_{+}-\left.\frac{\partial
U}{\partial\nu}\right|_{+}$ on $\Gamma$, we will apply the
post-processing operator given in Lemma \ref{lemgreen}. The
modified matrix will still be denoted $A$.

To conclude, the location of the target $D$ is going to be
recovered from the knowledge of the following data
\begin{equation} \label{defdata}
\begin{alignedat}{1}A_{ln} & =\left(\frac{1}{2}I-\mathcal{K}_{\Gamma}^{*}-\xi\frac{
\partial\mathcal{D}_{\Gamma}}{\partial\nu}\right)\left(\left.\frac{\partial u_{n}}{\partial\nu}\right|_{+}-\left.
\frac{\partial U}{\partial\nu}\right|_{+}\right)(x_{l}), \quad 1\leq l\leq L, 1 \leq n\leq N, \\
\end{alignedat}
\end{equation}
which is approximately equal to
\begin{equation}
\begin{alignedat}{1}A_{ln}
 & \simeq \alpha^{2}\nabla U(z)^T M(k_{n},B) \nabla_{z}\left(\left.\frac{\partial G}{\partial\nu_{x}}\right|_{+}
 \right)(x_{l},z),
\end{alignedat}
\label{eq:SFR-final}
\end{equation}
when the characteristic size of the target $\alpha$  is small. It
is worth mentioning that the  polarization tensor $M(k_{n},B)$ is
symmetric (but not Hermitian) \cite{ammari2007polarization}.

\subsection{A location search algorithm}

\label{sub:algorithm}

Scholz described in \cite{scholz2002towards} a way to recover the
location of a target from multi-frequency measurements. The paper
focuses on an application in electrical impedance tomography (EIT)
for breast cancer detection; the algorithm was called
``Space-Frequency MUSIC''. Indeed, it is based on the so-called
MUSIC algorithm, which is a standard tool in signal theory for the
identification of several signals with an additive noise
\cite{schmidt1986multiple, devaney2004super}. It has then been
applied to identify small conductivity inhomogeneities in
\cite{ammarihanke, ammarinumer, bruhl2003direct}. In this section,
we apply a similar approach for our model.

As we can see in formula (\ref{eq:SFR-final}), the rows of the SFR
matrix are - to leading-order - linear combinations of the
derivatives of $\partial G/\partial\nu_{x}$. Moreover, one has to
distinguish whether the target is a disk or not. Indeed, in
dimension $2$ and in the case of an ellipse whose semi-axes are on
the $x_{i}$-axis and of length $a$ and $b$, the polarization
tensor $M(k,B)$, for $k\in \mathbb{C}$,  takes the form
\cite{milton_book}
\[
M(k,B)=(k-1)\vert B\vert\left(\begin{array}{cc}
\frac{a+b}{a+kb} & 0\\
0 & \frac{a+b}{b+ka}
\end{array}\right).
\]
Hence, the polarization tensor is proportional to the identity
matrix if and only if $a=b$, \emph{i.e.}, $B$ is a disk; this
result remains true in dimension $3$
\cite{ammari2007polarization}. This changes dramatically the range
of $A$: if $B$ is a disk, the response matrix
has rank $1$ and if it is an ellipse, it has rank $2$.\\
For the sake of simplicity, let us suppose that $B$ is the unit disk.
The identification process will be based on the following fact

\begin{lem}\label{lem:one-to-one}The following map
\[\begin{array}{ll}
\Lambda: &\mathbb{R}^{2}\setminus\overline{\Omega} \rightarrow  L^{2}(\Gamma)\\
\nm &z \mapsto  \nabla U(z)^T \nabla_{z}\frac{\partial
G}{\partial\nu_{x}}(\cdot,z),
\end{array}
\]
is one-to-one.\end{lem}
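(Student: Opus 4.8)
The plan is to realize $\Lambda(z)$ as the boundary trace of the normal derivative of an explicit harmonic function that carries a dipole singularity at $z$, and then to recover $z$ from the location of that singularity. First I would rewrite the kernel. Since $G(x-z)$ depends only on $x-z$, one has $\nabla_z G(x-z)=-\nabla_x G(x-z)$, and a direct computation gives, for $x\in\Gamma$,
\begin{equation}
\Lambda(z)(x)=\frac{\partial V_z}{\partial\nu}(x), \qquad V_z(x):=-\nabla U(z)\cdot\nabla_x G(x-z).
\end{equation}
For fixed $z\in\mathbb{R}^{2}\setminus\overline{\Omega}$, the function $V_z$ is harmonic in $x$ on $\mathbb{R}^{2}\setminus\{z\}$ (being a constant-coefficient first-order derivative of the fundamental solution), it decays at infinity, and near $z$ it behaves like the dipole field $-\tfrac{1}{2\pi}\nabla U(z)\cdot (x-z)/|x-z|^{2}$. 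In particular $V_z$ is genuinely singular at $z$ precisely when $\nabla U(z)\neq 0$. Note also that, since $z\notin\overline{\Omega}$, the function $V_z$ is harmonic across $\Gamma$, so the normal derivative above is unambiguous.

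Next I would suppose $\Lambda(z_1)=\Lambda(z_2)$ and set $W:=V_{z_1}-V_{z_2}$. Then $\partial_\nu W=0$ on $\Gamma$, and $W$ is harmonic in the bounded domain $\Omega$ (neither singularity lies in $\overline{\Omega}$). Green's identity gives $\int_\Omega|\nabla W|^{2}=\int_{\partial\Omega}W\,\partial_\nu W=0$, so $W$ is constant on $\Omega$. Because $W$ is harmonic, hence real-analytic, on the connected open set $\mathbb{R}^{2}\setminus\{z_1,z_2\}$ and is constant on the open subset $\Omega$, the identity theorem forces $W$ to equal that same constant throughout $\mathbb{R}^{2}\setminus\{z_1,z_2\}$.

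Finally I would read off $z_1=z_2$ from the singularities. A constant function is smooth, whereas, if $z_1\neq z_2$, the function $W$ inherits the dipole singularity of $V_{z_1}$ at $z_1$ (the term $-V_{z_2}$ being smooth there). Provided $\nabla U(z_1)\neq 0$, this singularity is nonremovable, contradicting the constancy of $W$; hence $z_1=z_2$. This last point is where the nonvanishing of the background field is essential, and it is the step that requires the most care: the map is one-to-one on the region where $\nabla U\neq 0$ (the relevant search domain, since far from the body the field behaves like a nonvanishing dipole field), while at a zero of $\nabla U$ one trivially has $\Lambda(z)\equiv 0$. Thus the two genuinely delicate ingredients are the unique-continuation passage from $\Omega$ to the exterior, which I expect to handle cleanly via real-analyticity of harmonic functions, and the verification that $\nabla U$ does not vanish on the search region, which is what guarantees that the dipole singularity actually pins down $z$; I regard the latter as the main obstacle.
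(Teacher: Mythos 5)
Your proof is correct and ends at the same place as the paper's --- both arguments realize $\Lambda(z)$ as the Neumann trace on $\Gamma$ of the dipole field $V_z(x)=-\nabla U(z)\cdot\nabla_x G(x-z)$ and then let the location of the singularity pin down $z$ --- but your middle step is genuinely different, and arguably cleaner. The paper introduces $v_z$ and $v_{z'}$, observes that they solve the same exterior Neumann problem on $\mathbb{R}^{2}\setminus\left(\overline{\Omega}\cup\{z\}\cup\{z'\}\right)$ with data $\varphi$ on $\Gamma$ and decay at infinity, and concludes $v_z=v_{z'}$ ``by the uniqueness of the solution for this problem''; taken literally this is delicate, because on the punctured domain the difference $v_z-v_{z'}$ may carry dipole singularities at $z$ and $z'$, which are not in $H^1_{\rm loc}$, so the standard energy-based uniqueness argument does not apply as stated. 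Your route sidesteps the punctures entirely: since both singular points lie outside $\overline{\Omega}$, the difference $W=V_{z_1}-V_{z_2}$ is harmonic in a neighborhood of $\overline{\Omega}$ with $\partial_\nu W=0$ on $\partial\Omega$, the interior Green identity forces $W$ to be constant on $\Omega$, and real-analyticity of harmonic functions on the connected open set $\mathbb{R}^{2}\setminus\{z_1,z_2\}$ propagates that constant up to the singular points, where the nonremovable dipole singularity at $z_1$ gives the contradiction unless $z_1=z_2$. What your version buys is rigor at exactly the point the paper glosses (the unique-continuation passage replaces an appeal to uniqueness on a punctured domain); what the paper's buys is brevity, reusing an exterior-uniqueness statement already present in the text. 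Your explicit caveat is also well taken and applies equally to the paper's proof: if $\nabla U(z)=0$ then $\Lambda(z)\equiv 0$, so injectivity as stated silently requires $\nabla U$ to be nonvanishing on the exterior search region (or to vanish at no more than one point there); the paper's final step ``relying on the singularity of $G(\cdot,z)$'' needs $\nabla U(z)\neq 0$ for that singularity to be present at all, and neither the lemma's statement nor its proof in the paper records this hypothesis.
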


\begin{proof}

Suppose that $z$ and $z'$ are points on
$\mathbb{R}^{2}\setminus\overline{\Omega}$ such that
$\Lambda(z)=\Lambda(z'):=\varphi$. Let us define the two following
functions
\[
\begin{array}{ll}v_{z}: &\mathbb{R}^{2}\setminus\overline{\Omega}\cup\{z\} \rightarrow \mathbb{R}\\
& x \mapsto  \nabla U(z)^T \nabla_{z}G(x,z),
\\
\nm \nm \nm
v_{z'}: & \mathbb{R}^{2}\setminus\overline{\Omega}\cup\{z'\} \rightarrow  \mathbb{R}\\
& x \mapsto  \nabla U(z')^T \nabla_{z'}G(x,z').
\end{array}
\]
Thus, these two functions both solve the following boundary value
problem
\[
\left\{ \begin{alignedat}{2}\Delta v & =0, & \,\, x\in\mathbb{R}^{2}\setminus\overline{\Omega}\cup\{z\}\cup\{z'\},\\
\frac{\partial v}{\partial\nu} & =\varphi, & \,\, x\in\Gamma,\\
v & \rightarrow0 & \,\,\left|x\right|\rightarrow\infty,\text{ uniformly in }\hat{x}.
\end{alignedat}
\right.
\]
Hence, by the uniqueness of the solution for this problem, we have
\[
\nabla U(z)\cdot\nabla_{z}G(x,z)=\nabla
U(z')\cdot\nabla_{z'}G(x,z'),\mbox{ for all
}x\in\mathbb{R}^{2}\setminus\overline{\Omega}\cup\{z\}\cup\{z'\}.
\]
Relying on the singularity of $G(\cdot,z)$ at the point $z$, this
is only possible if $z=z'$.

\end{proof}

However, we do not have access to the complete function (because
there is only a finite number of electroreceptors on the body),
and the formula for $\Lambda$ is only an approximation, based
on~(\ref{eq:SFR-final}). The location of the target will then be
approximated as follows. In the following we suppose for the sake
of simplicity that $x_1,\ldots, x_L$ are equi-distributed on
$\Gamma$.

\begin{prop}[Space-Frequency MUSIC]

\label{prop:SF-MUSIC} Define the vector
\begin{equation}
\tilde{g}(z):=\left(\nabla U(z)\cdot\nabla_{z}\left(\frac{\partial
G}{\partial\nu_{x}}\right)(x_{1},z),\ldots,\nabla
U(z)\cdot\nabla_{z}\left(\frac{\partial
G}{\partial\nu_{x}}\right)(x_{L},z)\right)^{*},\label{eq:illumination-vector-disk}
\end{equation}
and its normalized version $g=\tilde{g}/\vert\tilde{g}\vert$.
Then, in the limit $L\rightarrow+\infty$ and $\alpha
\rightarrow0$, the following imaging functional will have a large
peak at $z$:
\begin{equation}
\mathcal{I}(z_{s}):=\frac{1}{\left|(I-P)g(z_{s})\right|},\label{eq:imaging_functional}
\end{equation}
 where $P$ is the orthogonal projection onto the first
singular vector of the SFR matrix $A$.

\end{prop}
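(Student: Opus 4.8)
The plan is to exploit the rank-one structure that the disk assumption forces on the SFR matrix $A$, and then to read off its first singular vector. Since $B$ is the unit disk, the polarization tensor is a scalar multiple of the identity, $M(k_n,B)=m(k_n)I$ with $m(k_n)=2(k_n-1)|B|/(k_n+1)$; substituting this into the dipolar approximation (\ref{eq:SFR-final}) gives, to leading order in $\alpha$,
\[
A_{ln}\simeq\alpha^{2}m(k_n)\,\nabla U(z)^{T}\nabla_{z}\Big(\frac{\partial G}{\partial\nu_{x}}\Big|_{+}\Big)(x_{l},z)=\alpha^{2}m(k_n)\,\tilde g_l(z),
\]
so that $A\simeq\alpha^{2}\,\tilde g(z)\,\mathbf{m}^{T}$ with $\mathbf{m}=(m(k_1),\dots,m(k_N))^{T}$. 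Thus $A$ is, up to an $O(\alpha)$ relative perturbation, a rank-one matrix whose column space is spanned by $\tilde g(z)$.

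First I would make the factorization exact by letting $\alpha\to0$: the remainder in (\ref{eq:SFR-final}) is of higher order, so the leading left singular vector of $A$ converges to the normalized column $g(z)=\tilde g(z)/|\tilde g(z)|$. This is a standard perturbation-of-singular-vectors estimate, using that the single nonzero singular value of the leading term is bounded below while the perturbation is $O(\alpha)$ smaller, so the spectral gap controls the rotation. Consequently the signal-subspace projection satisfies $P\to g(z)g(z)^{*}$, whence $\mathrm{Ran}\,P=\mathrm{span}\{g(z)\}$ and $I-P$ is the orthogonal projection onto its complement.

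The MUSIC dichotomy is now immediate at the two endpoints. If $z_{s}=z$ then $g(z_{s})=g(z)\in\mathrm{Ran}\,P$, hence $(I-P)g(z_{s})=0$ and the functional (\ref{eq:imaging_functional}) blows up: this is the peak. If $z_{s}\neq z$ I must show $(I-P)g(z_{s})\neq0$, i.e. that $g(z_{s})$ is not a scalar multiple of $g(z)$. Here I would invoke a mild strengthening of Lemma~\ref{lem:one-to-one}: suppose $\Lambda(z_{s})=\lambda\,\Lambda(z)$ on $\Gamma$ for some (necessarily nonzero) scalar $\lambda$. Then the dipole fields $v_{z_{s}}$ and $\lambda v_{z}$ from the proof of that lemma solve the same exterior Neumann problem with decay at infinity, so by uniqueness they coincide away from $z$ and $z_{s}$; comparing the dipole singularities, one located at $z_{s}$ and the other at $z$, forces $z_{s}=z$ exactly as in the proof of Lemma~\ref{lem:one-to-one}. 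Hence for $z_{s}\neq z$ the functions $\Lambda(z_{s})$ and $\Lambda(z)$ are linearly independent in $L^{2}(\Gamma)$.

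The remaining point, and the main obstacle, is to transfer this continuum statement to the finite vectors $g(z_{s}),g(z)\in\mathbb{C}^{L}$, which is precisely the role of the limit $L\to\infty$. With $x_{1},\dots,x_{L}$ equidistributed on $\Gamma$, the rescaled Euclidean inner product $\tfrac{1}{L}\langle\,\cdot\,,\,\cdot\,\rangle$ of the sampled vectors converges to the $L^{2}(\Gamma)$ inner product of the corresponding functions, so the $2\times2$ Gram matrix of $\{g(z_{s}),g(z)\}$ converges to the Gram matrix of the normalized $\{\Lambda(z_{s}),\Lambda(z)\}$, which is nonsingular by the independence just established. Thus for $L$ large the angle between $g(z_{s})$ and $g(z)$ is bounded away from zero and $|(I-P)g(z_{s})|$ stays bounded below, so (\ref{eq:imaging_functional}) is bounded away from $z$ and unbounded at $z$, giving the claimed peak. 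The delicate part is uniformity: one needs the angle lower bound to be locally uniform in $z_{s}$ and the quadrature convergence uniform on compact subsets of $\mathbb{R}^{2}\setminus\overline{\Omega}$, which follows from the smoothness of $(z_{s},x)\mapsto\nabla U(z_{s})\cdot\nabla_{z_{s}}\tfrac{\partial G}{\partial\nu_{x}}(x,z_{s})$ away from $\Gamma$.
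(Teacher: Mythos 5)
Your proof is correct and rests on the same essential ingredients as the paper's---the rank-one limiting structure of $A$, perturbation of the spectral projector, Lemma~\ref{lem:one-to-one}, and a quadrature argument for $L\to\infty$---but it organizes them in the opposite order and at a more concrete level. The paper first sends $L\to\infty$, replacing the matrix by an operator $\mathbb{A}_\alpha$ mapping the time signal $h$ to the post-processed boundary data, and only then sends $\alpha\to0$, invoking Kato's perturbation theory to get $P_\alpha\to P$, the projector of the rank-one operator $h\mapsto\Lambda(z)h$. You instead write the explicit factorization $A\simeq\alpha^2\,\tilde g(z)\,\mathbf{m}^T$ and apply a finite-dimensional singular-vector perturbation bound (Wedin/Davis--Kahan type) at fixed $L$, passing to $L\to\infty$ via Gram matrices only for the separation statement; this is more elementary and makes the signal subspace explicit. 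More substantively, you correctly observe that the MUSIC dichotomy needs $\Lambda(z_s)$ to not be a \emph{scalar multiple} of $\Lambda(z)$ for $z_s\neq z$, which is strictly stronger than the injectivity asserted in Lemma~\ref{lem:one-to-one}; the paper cites the lemma directly at this point, silently using the stronger statement, whereas you supply the missing step (if $\Lambda(z_s)=\lambda\Lambda(z)$, then $v_{z_s}$ and $\lambda v_z$ solve the same exterior Neumann problem with decay, and comparison of the dipole singularities forces $z_s=z$), and this strengthening does go through by exactly the uniqueness-plus-singularity reasoning of the lemma's own proof. Your attention to uniformity of the quadrature convergence and of the angle bound is likewise more careful than the paper's one-line appeal to quadrature formulas. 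The only implicit hypotheses you share with the paper are $\nabla U(z)\neq0$ (so that $\tilde g(z)\neq0$ and the dipole singularity is nontrivial) and $\mathbf{m}\neq0$ (nonzero contrast), both of which are needed for the spectral-gap and singularity arguments in either version.
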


\begin{proof}

First of all, let us rewrite (just for this proof) the projection
$P_{\alpha}^{L}$ and the illumination vector $g^{L}$, in order to
take into account the dependence with respect to $L$. When $L$
goes to infinity, quadrature formulas show us that
\[
\vert(I-P_{\alpha}^{L})g^{L}(z_{s})\vert_{\mathbb{R}^{L}}\rightarrow\left|(I-P_{\alpha})
\frac{\Lambda(z_{s})}{\vert\Lambda(z_{s})\vert_{L^{2}(\Gamma)}}\right|_{L^{2}(\Gamma)}.
\]
Here, $P_{\alpha}$ is the projection onto the first singular
vector of the operator $\mathbb{A}_{\alpha}$, acting on the space
of functions that have the form  (\ref{eq:formule-h})
\[
\mathbb{A}_{\alpha}:h\mapsto\left(\frac{1}{2}I-\mathcal{K}_{\Gamma}^{*}
-\xi\frac{\partial\mathcal{D}_{\Gamma}}{\partial\nu}\right)\left(\left.\frac{\partial
u}{\partial\nu}\right|_{+}-\left.\frac{\partial
U}{\partial\nu}\right|_{+}\right),
\]
where $u$ is given by (\ref{sumu}) and $U$ is the background
solution ({\it i.e.}, the solution of
(\ref{eq:complex_neumann_problem}) with $\chi_D=0$).

In the limit $\alpha \rightarrow0$, $\mathbb{A}_{\alpha}$ is
approximated by the operator $\mathbb{A}:\, h\mapsto\Lambda(z)h$,
which is obviously of rank one. By theory of
perturbation~\cite{kato1976perturbation}, one has therefore
\[
\left|(I-P_{\alpha})\frac{\Lambda(z_{s})}{\vert\Lambda(z_{s})\vert_{L^{2}(\Gamma)}}
\right|_{L^{2}(\Gamma)}\rightarrow\left|(I-P)\frac{\Lambda(z_{s})}{\vert\Lambda(z_{s})
\vert_{L^{2}(\Gamma)}}\right|_{L^{2}(\Gamma)},\,\,\alpha\rightarrow0,
\]
where $P$ is the projector onto the first significant singular
vector of $\mathbb{A}$. Then, from Lemma~\ref{lem:one-to-one},
this functional is zero if and only if $z_{s}=z$.

\end{proof}

Moreover, in order to have a general algorithm which is robust
with respect to the background solution, we will plot the
following imaging functional:
\begin{equation} \label{musicf}
\mathcal{I}(z_{s}):=\max\left(\frac{1}{\left|(I-P)g^{\mathcal{E}}(z_{s})\right|},
\frac{1}{\left|(I-P)g^{\mathcal{D}}(z_{s})\right|}\right),
\end{equation}
where $g^{\mathcal{D}}$ is defined in Proposition~\ref{prop:SF-MUSIC}
and $g^{\mathcal{E}}(z_{s})$ is the normalization of the following
vector
\[
\tilde{g}^{\mathcal{E}}(z)=\left(\nabla_{z}\left(\frac{\partial
G}{\partial\nu_{x}}\right)(x_{1},z),\ldots,\nabla_{z}\left(\frac{\partial
G}{\partial\nu_{x}}\right)(x_{L},z)\right)^{*}.
\]
Numerical results will be given in section \ref{sec:numeric}.

Let us highlight the fact that in the case of a general shape $B$,
we do not know theoretically what happens. Indeed, when $k$ is
real, $M(k,B)$ is equivalent to the polarization tensor of an
ellipse \cite{bruhl2003direct}, but this is not true when
$k\in\mathbb{C}$ because the proof relies on the spectral theorem.
Here, we still have symmetry \cite{ammari2007polarization}, but it
is not sure if $M(k,B)$ is diagonalizable or not. However, we will
see in the numerical subsection \ref{sec:numeric} that the
algorithm works with shapes other than ellipses and disks.

\section{Numerical simulations}

\label{sec:numeric}

In this section, numerical results are presented in order to
illustrate the multi-frequency location search algorithm
introduced in the previous section. In the first subsection, we
explain the method used to compute the electric field; this will
be the input of our location search algorithm that will be
performed in the second subsection.

\subsection{Direct problem}

\label{sub:direct-problem-numeric}

This section is devoted to the computation of the electric field
around the fish.

\subsubsection{The case without target}

The electric field $U$ generated by the fish is the function
$u_{0,\infty}$ treated in section \ref{sec:forward_problem}. Let
us recall that it is the solution of the following system:
\begin{equation}
\left\{ \begin{alignedat}{2}\Delta U & ={f}, & \,\, x\in\Omega,\\
\Delta U & =0, & \,\, x\in\mathbb{R}^{2}\setminus\overline{\Omega},\\
U\big|_+ - U\big|_- -\xi\left.\frac{\partial U}{\partial\nu}\right|_{+} & =0, & \,\, x\in\Gamma,\\
\left.\frac{\partial U}{\partial\nu}\right|_{-} & =0, & \,\, x\in\Gamma,\\
\left|U\right| & = {O}(\left|x\right|^{-1}), &
\,\,\left|x\right|\rightarrow\infty,\text{ uniformly in }\hat{x}.
\end{alignedat}
\right.\label{eq:sytem-U-developped}
\end{equation}
Numerical simulations will be done using a boundary element method
(BEM). Indeed, we need accuracy on the skin of the fish, and the
jumps at the boundaries are too difficult to handle with a finite
element method. Moreover, it reduces the number of discretization
points, resulting in a much faster algorithm.

This BEM simulation relies on the representation formula for $U$
in terms of the layer potentials. From
Lemma~\ref{lem:decomposition_lemma_asymptotic}, we have
$U=H+\mathcal{S}_{\Gamma}\psi+\mathcal{D}_{\Gamma}\varphi,$ where
$\Delta H={f}$ in the whole space, and the potentials are
solutions of the system:
\begin{equation}
\left\{ \begin{alignedat}{1}\varphi & =-\xi\psi, \quad x \in \Gamma, \\
\left(\frac{I}{2}-\mathcal{K}_{\Gamma}^{*}+\xi\frac{\partial\mathcal{D}_{\Gamma}}
{\partial\nu}\right)\psi & =\frac{\partial H}{\partial\nu}, \quad
x \in \Gamma.
\end{alignedat}
\right.\label{eq:system_potential_U}
\end{equation}
Note that we have changed a little bit the notation, in order to
 be able to test the case $\xi=0$.
On smooth domains, the operator $\mathcal{K}_{\Gamma}^{*}$ is easy
to handle because its kernel has integrable singularity, whereas
the operator $\partial\mathcal{D}_{\Gamma}/\partial\nu$ is an
\emph{hypersingular operator}. Thus, one has to perform a
integration by parts in order to regularize it: for two smooth
functions $v_{1}$ and $v_{2}$, we have (for example from
\cite[Theorem 1]{nedelec1982integral} and \cite[Theorem
6.15]{steinbach2008numerical}):
\begin{equation}
\int_\Gamma \frac{\partial\mathcal{D}_{\Gamma} v_{1}}{\partial\nu}
\cdot v_{2}
=\int_{\Gamma}\int_{\Gamma}G(x-y)\textrm{curl}_{\Gamma}v_{1}
(x)\cdot\textrm{curl}_{\Gamma}v_{2}(y)\, ds(x)\, ds(y),
\label{eq:hypersingular-integration-by-parts}
\end{equation}
where $\textrm{curl}_{\Gamma}$ is the surface rotational, defined
in the following way in dimension $2$. First, let us define the
vector:
\[
\underline{\textrm{curl}}_{\Gamma}\tilde{v}=\left(\begin{alignedat}{1}\frac{\partial\tilde{v}}{\partial x_{2}}\\
-\frac{\partial\tilde{v}}{\partial x_{1}}
\end{alignedat}
\right),
\]
where $\tilde{v}$ is an extension of $v$ into a neighborhood of
$\Gamma$, \emph{i.e.}, $\tilde{v}(x)=v\left(\mathcal{P}(x)\right)$
with  the local projection $\mathcal{P}$ onto $\Gamma$. Then $
$$\textrm{curl}_{\Gamma}$ is defined by
\[
\textrm{curl}_{\Gamma}v(x):=\nu(x)\cdot\underline{\textrm{curl}}_{\Gamma}\tilde{v}(x).
\]

In our context, this can be made much easier. Recalling the
notation of subsection \ref{sub:BC-derivation}, we have
\[
\Gamma=\left\{ x=X(t)=\left(\begin{array}{c}
X_{1}(t)\\
X_{2}(t)
\end{array}\right),\, t\in[0,2\pi]\right\} .
\]
Thus we have, for $x\in\Gamma$,
\[
\begin{alignedat}{1}\textrm{curl}_{\Gamma}v(x) & =\nu_{1}(x)\frac{\partial\tilde{v}}{\partial x_{2}}(x)
-\nu_{2}(x)\frac{\partial\tilde{v}}{\partial x_{1}} (x)\\
 & =X'_{2}(t)\frac{\partial v}{\partial x_{2}}(X(t))+X'_{1}(t)\frac{\partial v}{\partial x_{1}}(X(t)), \quad t=X^{-1}(x),\\
 & =\frac{d}{dt}\left[v(X(t))\right].
\end{alignedat}
\]
Hence, denoting by $v'$ the curvilinear derivative of $v$ on
$\Gamma$, formula (\ref{eq:hypersingular-integration-by-parts})
becomes
\[
\int_\Gamma \frac{\partial\mathcal{D}_{\Gamma} v_{1}}{\partial\nu}
\cdot v_{2} = \int_\Gamma \mathcal{S}_{\Gamma}v'_{1} \cdot v'_{2}.
\]
This enables us to derive a BEM formulation of the system (\ref{eq:system_potential_U});
however one has to perform it with $\mathbb{P}_{1}$ elements instead
of simple $\mathbb{P}_{0}$ elements in the case of $\xi=0$.

The discretization process is classical
\cite{steinbach2008numerical}. We only precise that the equation
is penalized in order to handle the condition at infinity, which
fixes an additive constant. To conclude, let us mention that this
boundary element formulation can be extended to the
three-dimensional case (see \cite{nedelec1982integral}).

\subsubsection{The case with a target}

In this subsection, we derive the modification induced on the
system (\ref{eq:system_potential_U}) in the presence of a target
$D \Subset \mathbb{R}^{2}\setminus \overline{\Omega}$ of (complex)
conductivity $k$. The system (\ref{eq:sytem-U-developped})
becomes:
\begin{equation}
\left\{ \begin{alignedat}{2}\Delta u & ={f}, & \,\, x\in\Omega,\\
\Delta u & =0, & \,\, x\in\mathbb{R}^{2}\setminus\left(\overline{\Omega}\cup\partial D\right),\\
u\big|_+ - u \big|_- -\xi\left.\frac{\partial u}{\partial\nu}\right|_{+} & =0, & \,\, x\in\Gamma,\\
\left.\frac{\partial u}{\partial\nu}\right|_{-} & =0, & \,\, x\in\Gamma,\\
{}u\big|_+ - u \big|_-  & =0, & \,\, x\in \partial D,\\
\left.\frac{\partial u}{\partial\nu}\right|_{+}-k\left.\frac{\partial u}{\partial\nu}\right|_{-} & =0, & \,\, x\in \partial
D,\\
\left|u\right| & = {O}(\left|x\right|^{-1}), &
\,\,\left|x\right|\rightarrow\infty,\text{ uniformly in }\hat{x}.
\end{alignedat}
\right.\label{eq:system-u-developped}
\end{equation}
Thus, $u$ can be written as
\[
u(x)=H(x)+\mathcal{S}_{\Gamma}\psi(x)+\mathcal{D}_{\Gamma}\varphi(x)+\mathcal{S}_{\partial
D}\phi(x).
\]
The absence of $\mathcal{D}_{\partial D}$ is justified by the
continuity across the boundary of $D$. From the jump formulas
(\ref{eq:jump_formulas}), the conditions on the boundaries
$\Gamma$ and $\partial D$ given in (\ref{eq:system-u-developped})
leads us to the following system:
\begin{equation}
\left\{ \begin{alignedat}{1}\varphi & =-\xi\psi, \quad x \in \Gamma, \\
\left(\frac{I}{2}-\mathcal{K}_{\Gamma}^{*}+\xi\frac{\partial\mathcal{D}_{\Gamma}}{\partial\nu}\right)\psi
-\frac{\partial}{\partial\nu}\left.\left(\mathcal{S}_{\partial D}\phi\right)\right|_{\Gamma} &
=\left.\frac{\partial H}{\partial\nu}\right|_{\Gamma}, \quad x \in \Gamma, \\
-\frac{\partial}{\partial\nu}\left.\left(\mathcal{S}_{\Gamma}\psi\right)\right|_{\partial
D}-\xi\frac{\partial}{\partial\nu}\left.\left(\mathcal{D}_{\Gamma}\psi\right)\right|_{\partial
D}+\left(\lambda I-\mathcal{K}_{\partial D}^{*}\right)\phi &
=\left.\frac{\partial H}{\partial\nu}\right|_{\partial D}, \quad x
\in \partial D,
\end{alignedat}
\right.\label{eq:system-potentials-anomaly}
\end{equation}
where
\[
\lambda:=\frac{k+1}{2(k-1)}.
\]
System (\ref{eq:system-potentials-anomaly}) can be rewritten as
follows:
\[
\mathbb{M}\left(\begin{alignedat}{1}\psi\\
\phi
\end{alignedat}
\right)=\left(\begin{alignedat}{1}\left.\frac{\partial H}{\partial\nu}\right|_{\Gamma}\\
\left.\frac{\partial H}{\partial\nu}\right|_{\partial D}
\end{alignedat}
\right),
\]
with
\[
\mathbb{M}:=\left(\begin{matrix} \left( \ds
\frac{I}{2}-\mathcal{K}_{\Gamma}^{*}
+\xi\frac{\partial\mathcal{D}_{\Gamma}}{\partial\nu}\right) &\ds
\left(-\left.\frac{\partial\mathcal{S}_{\partial D}}{\partial\nu}\right|_{\Gamma}\right)\\
\nm \ds
-\left(\left.\frac{\partial\mathcal{S}_{\Gamma}}{\partial\nu}\right|_{\partial
D}+\xi\left.\frac{\partial
\mathcal{D}_{\Gamma}}{\partial\nu}\right|_{\partial D}\right) &
\ds \left(\lambda I-\mathcal{K}_{\partial D}^{*}\right)
\end{matrix}
\right).
\]
The BEM formulation is then also classical, because the only
difficulty is due to the hypersingular operator in the upper left
term. Hence, we discretize $\psi\in L_0^{2}(\Gamma)$ with
$\mathbb{P}_{1}$ elements and $\phi\in L_{0}^{2}(\partial D)$ with
$\mathbb{P}_{0}$ elements.

\subsubsection{Direct simulations}

In this subsection, we present some numerical simulations of the
direct problem. We approximate the shape of the fish by an ellipse
with semi-axes of lengths $1$ and $0.3$; the electric organ is a
dipole in the $x_1$-direction of moment $1$, placed at
$z_{0}=(0.7,0)$ and the impedance is $\xi=0.1$. A ball of infinite
conductivity (more precisely, with $\sigma=10^{10}$ and
$\varepsilon=0$) and radius $r=0.05$ is located at
$(1.5\cos(\pi/3),1.5\sin(\pi/3))$. Figure
\ref{fig:fish_like_simul} shows the isopotentials. In Figure
\ref{fig:fish_like_simul} (b) it can be seen that the
isopotentials avoid the target since it is of infinite
conductivity.


\begin{figure}
\centering%
\begin{tabular}{ccc}
\includegraphics[width=7.4cm]{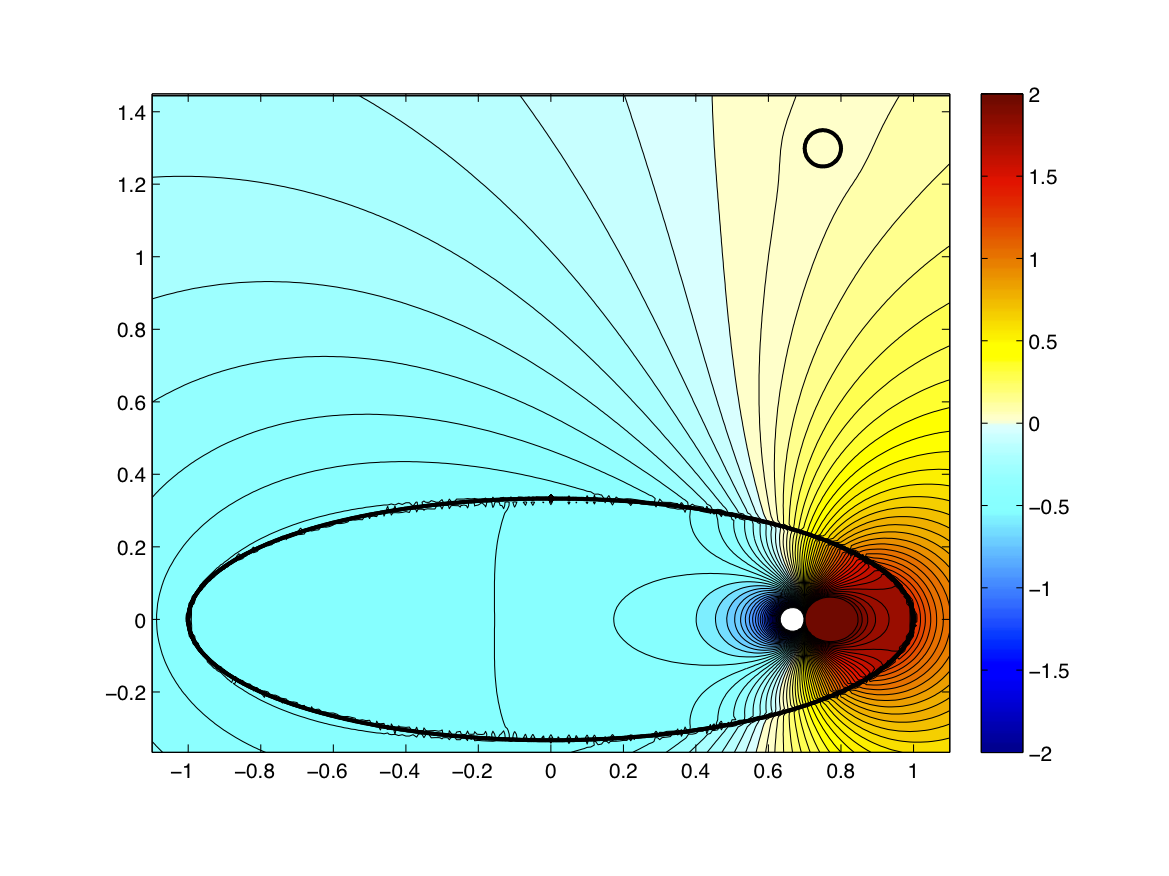} &
\includegraphics[width=7cm]{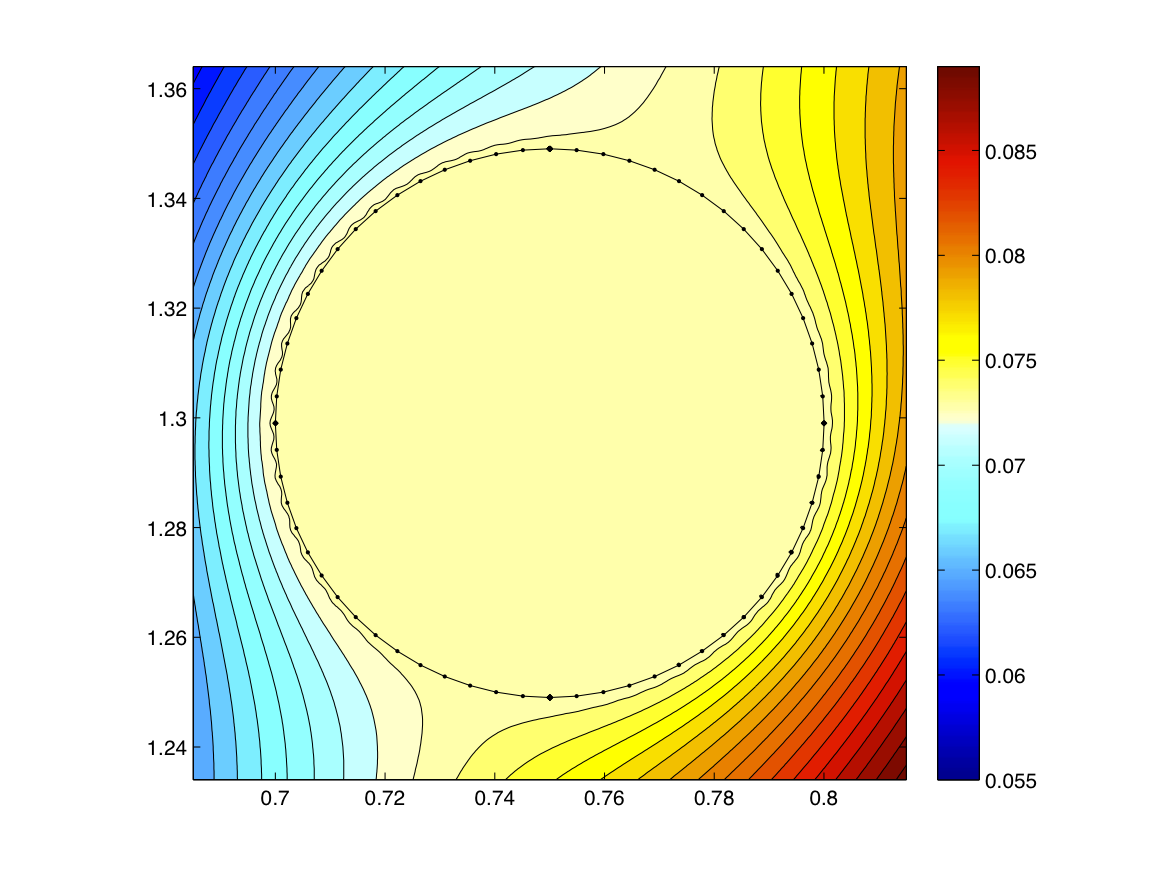}
\tabularnewline (a) & (b) \tabularnewline
\end{tabular}
\caption{\label{fig:fish_like_simul}Isopotentials of the case
described: (a) global overview (b) zoom on the target.}
\end{figure}

\subsection{Target location}

In this subsection, we show numerical target location results
using the imaging function (\ref{musicf}). With the same
parameters used for Figure \ref{fig:fish_like_simul} for the fish,
but with a small target of electric parameters $\sigma=2$ and
$\varepsilon=1$, we obtain the imaging functional plotted in
Figure \ref{fig:SF-MUSIC} (a). We use $10$ frequencies
equidistributed from $1$ to $10$. In Figure \ref{fig:SF-MUSIC}
(b) and (c), we have tested other shapes for the target.

\vspace{1cm}

\begin{figure}[!h]
\centering
\includegraphics[width=7.cm]{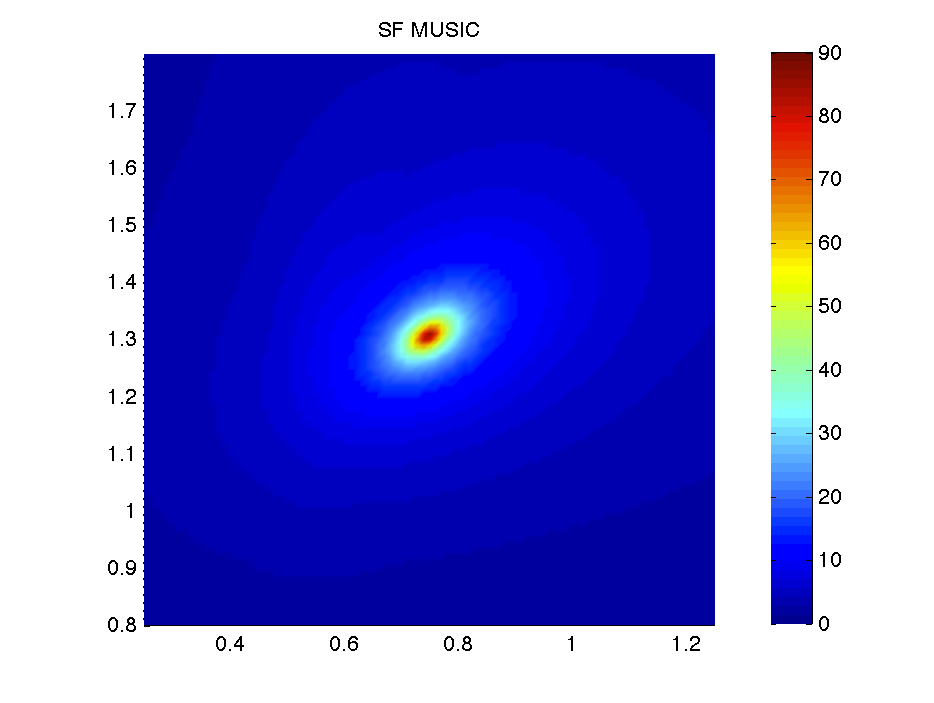} \hspace{0.5cm}
\includegraphics[width=7.cm]{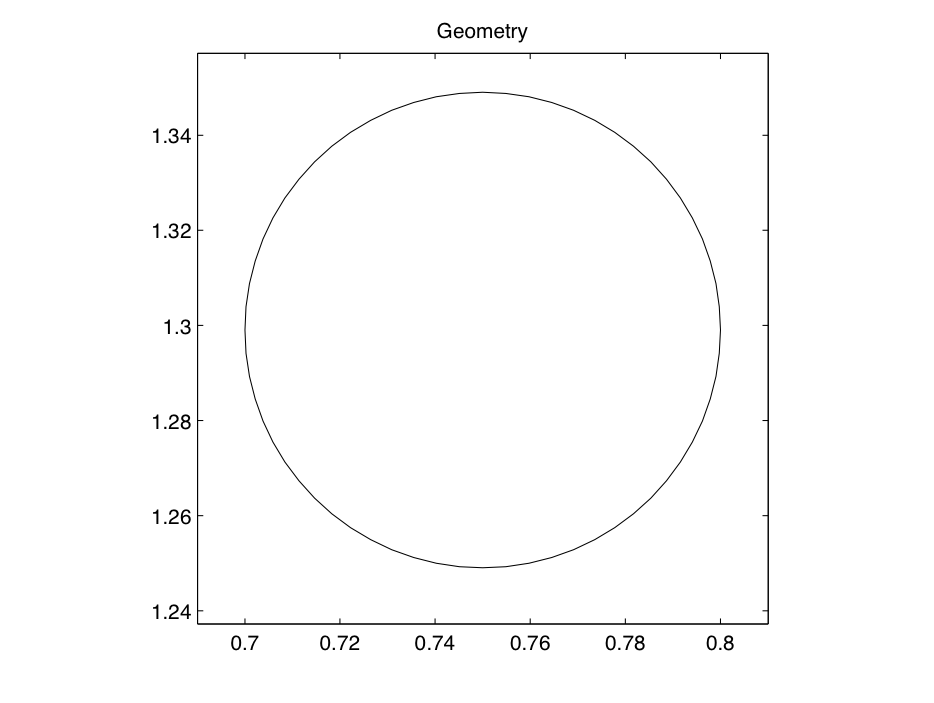}\\
\includegraphics[width=7.cm]{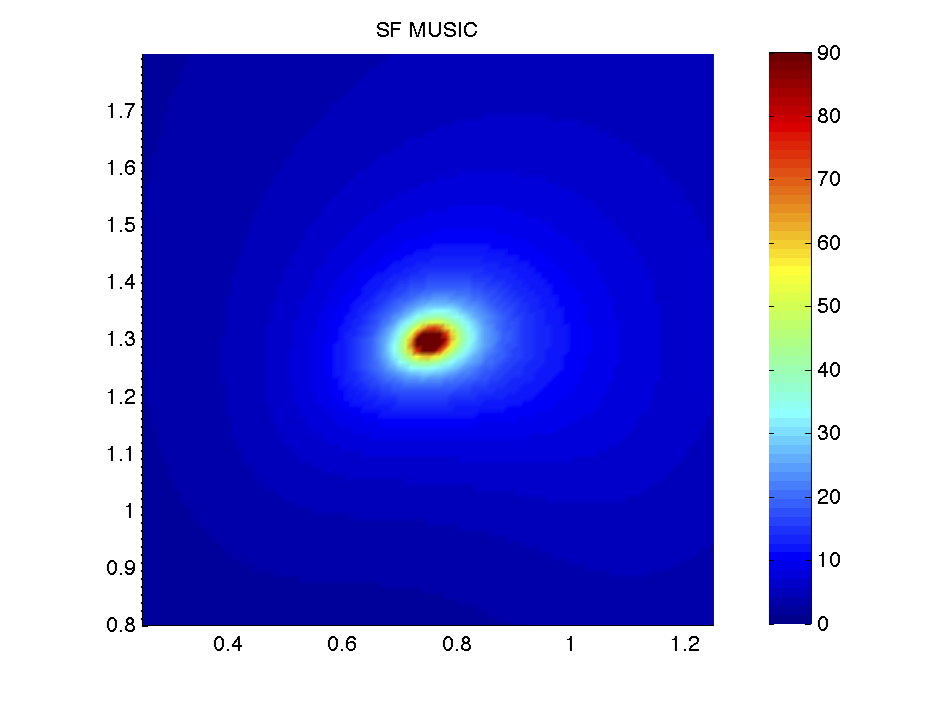}
\hspace{0.5cm} \includegraphics[width=7cm]{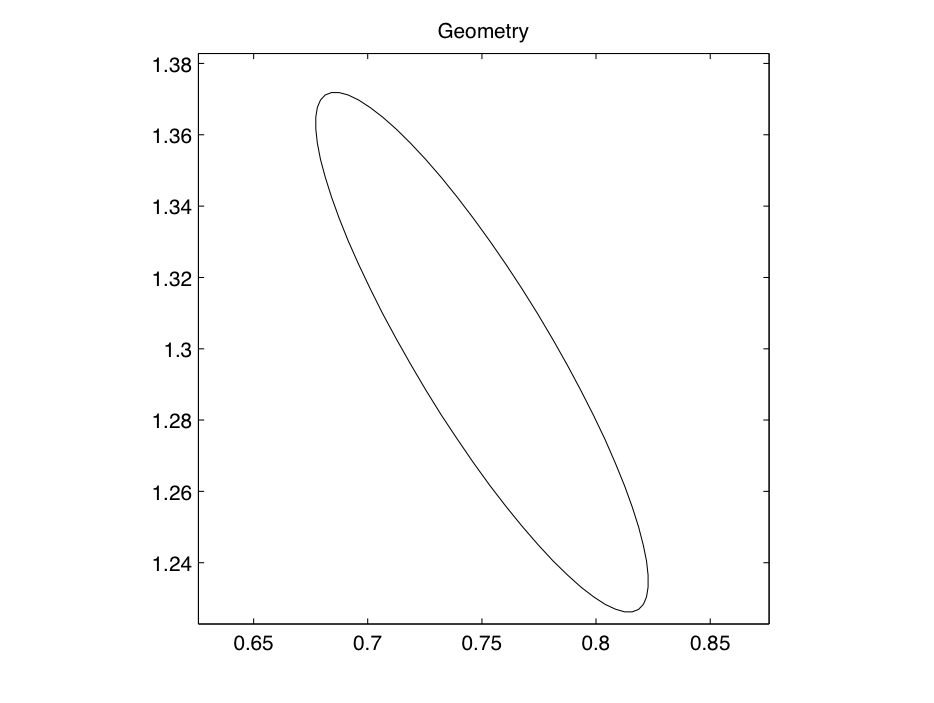} \\
\includegraphics[width=7cm]{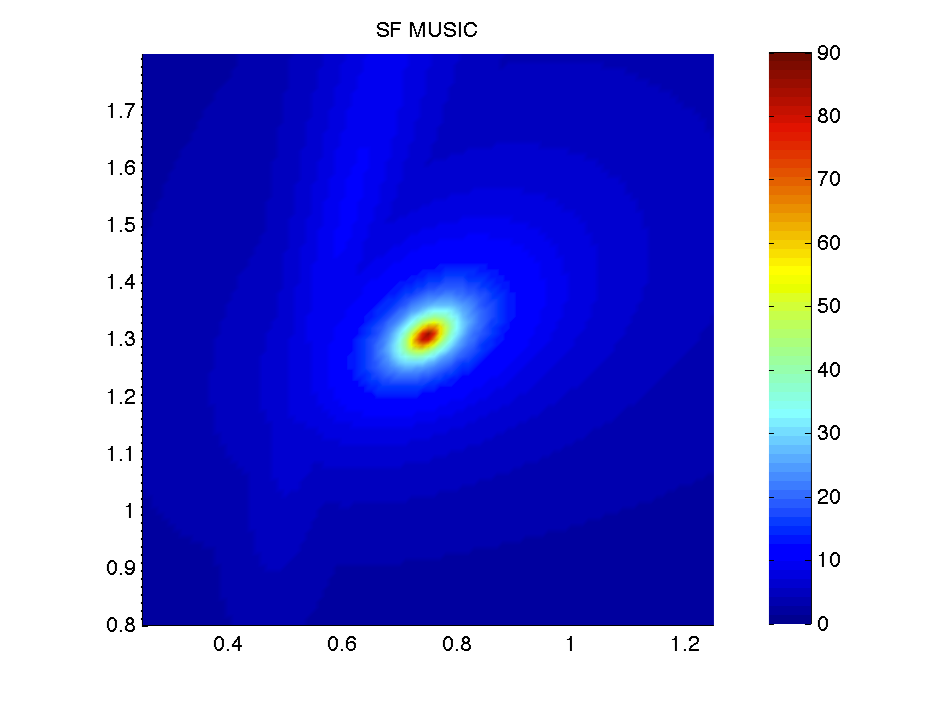}
 \hspace{0.5cm} \includegraphics[width=7cm]{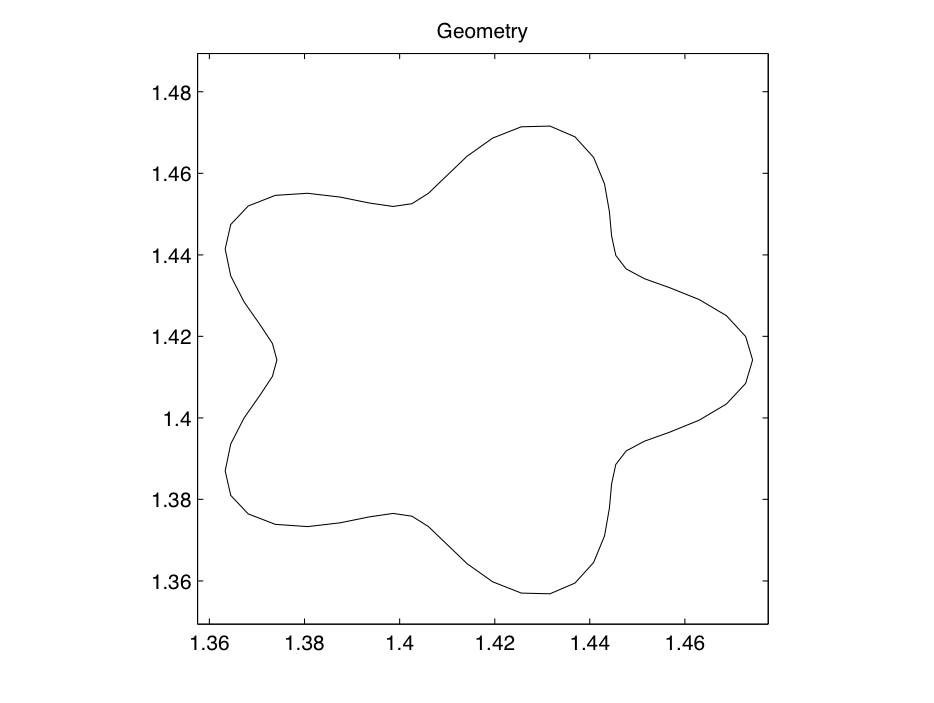}
\caption{\label{fig:SF-MUSIC}Detection (left) of the target with
the SF-MUSIC algorithm, for different target shapes (right). Here,
the number of used frequencies is $10$, equidistributed from $1$
to $10$, and there are $64$ equidistant sensors on the fish.}
\end{figure}

\subsubsection*{Stability estimates with respect to measurement noise}

Let us first notice that, in the absence of noise, the number of
used frequencies does not change  significantly the image. Indeed,
we can see in Figure \ref{fig:no-noise} that we can recover the
location of the target  with only one frequency.

\begin{figure}[!h]
\centering
\includegraphics[width=7cm]{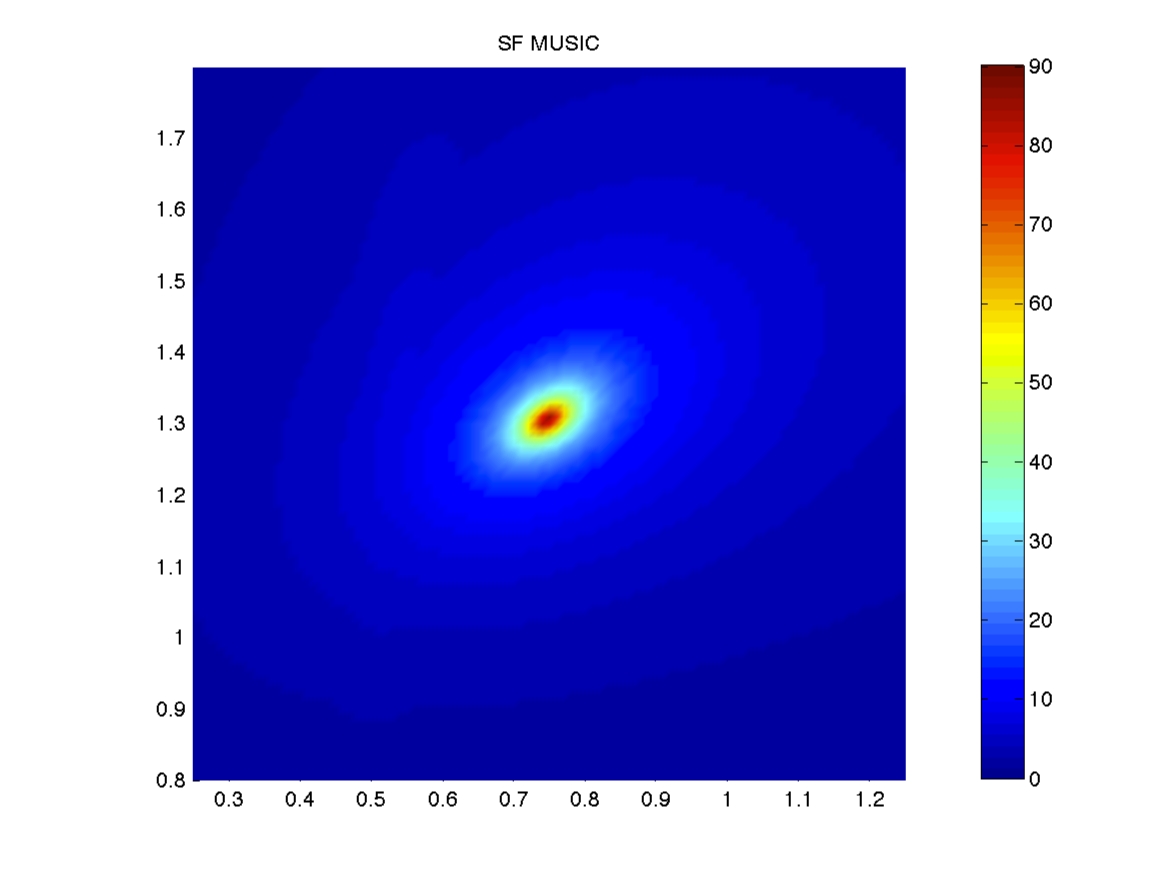}
\caption{\label{fig:no-noise}Target detection in the absence of
noise, with only one frequency equal to $1$. Here, the target is a
disk with center $(1.5\cos(\pi/3),1.5\sin(\pi/3))$ and radius
$0.05$, like in Figure \ref{fig:SF-MUSIC}(a); the number of
sensors is the same.}

\end{figure}

Let us now consider the effect of measurement noise on the
performance of the location search algorithm. We add to the
entries of the matrix $A$ defined in (\ref{eq:SFR-final})
independent Gaussian random variables of mean $0$ and standard
deviation
\[
\sqrt{\zeta}\max_{l,n}\left|\left(\left.\frac{\partial
u_{n}}{\partial\nu}\right|_{+}-\left.\frac{\partial
U}{\partial\nu}\right|_{+}\right)(x_{l})\right|.
\]
The parameter $\zeta$ is the relative strength of the noise, and
will be given in $\%$. Figure~\ref{fig:noise-freq_qualitative}
shows that increasing the number of frequencies stabilizes the
image.

\begin{figure}
\centering%
\includegraphics[width=7cm]{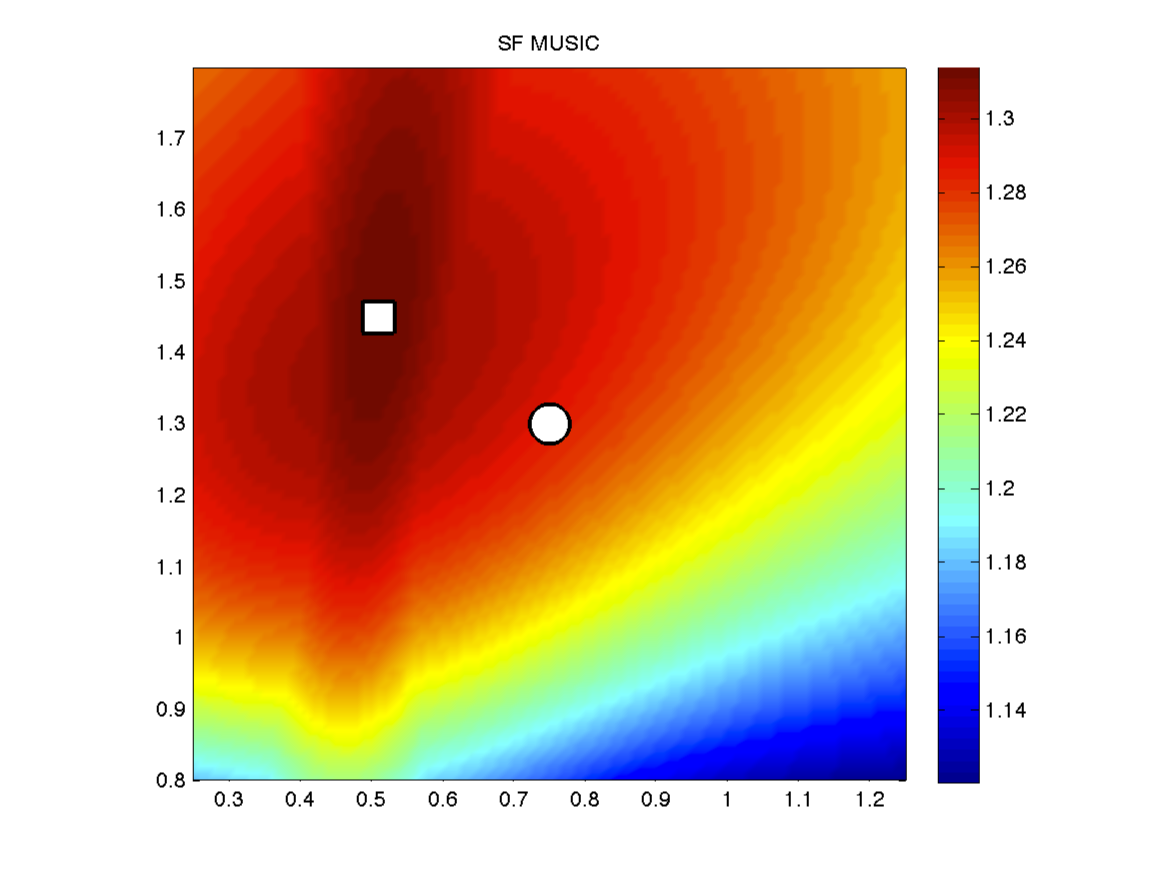} \hspace{0.2cm}
\includegraphics[width=7cm]{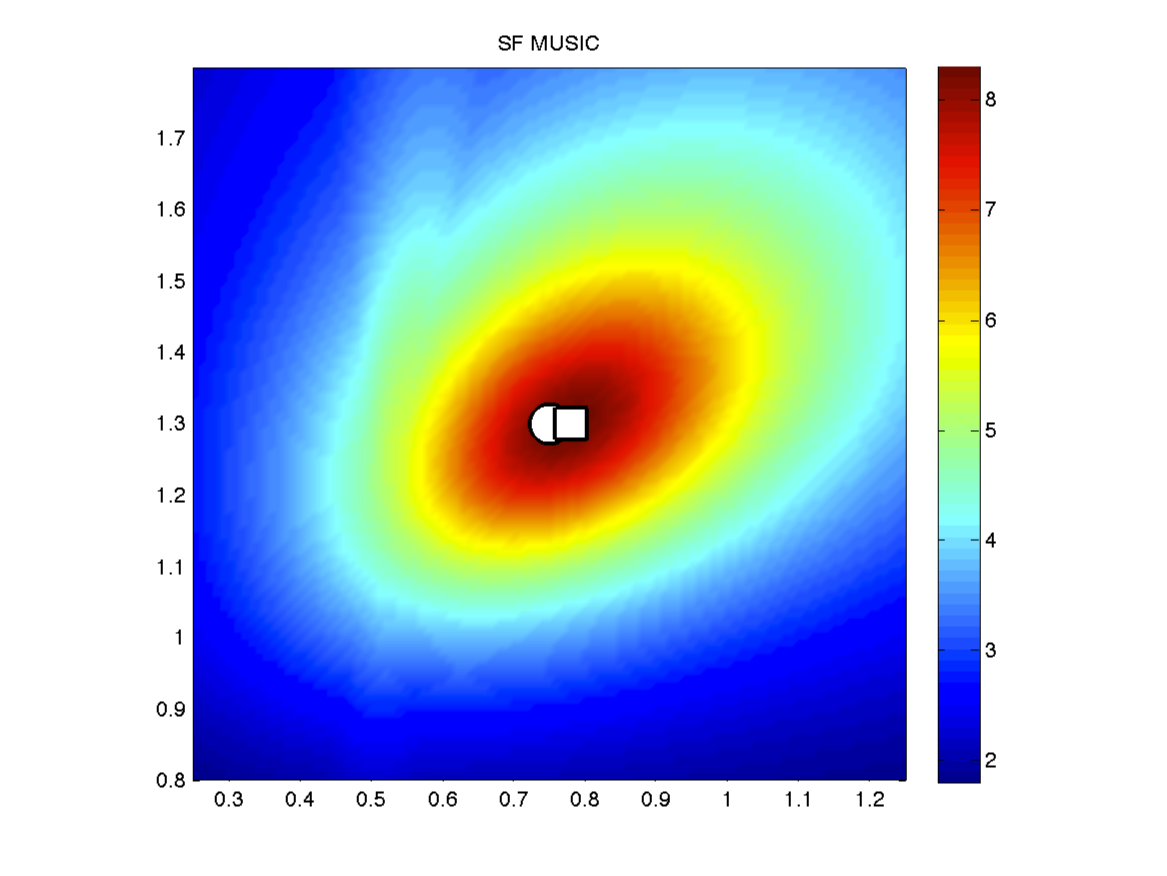}\tabularnewline
\caption{\label{fig:noise-freq_qualitative} Influence of the
number of used frequencies on the stability. Here, the same target
as in Figure \ref{fig:no-noise} is imaged with $1\%$ of noise and
~$1$~frequency (left), ~$100$~frequencies equidistributed from $1$
to $100$ (right), with $64$ sensors. The disks plot the exact
position, and the squares plot the location of the maximum of the
imaging functional.}

\end{figure}

More quantitatively, we have computed the empirical root mean
square location error (between the exact location of the target
and the maximum of the imaging functional), for $N_{r}=250$
trials. Here, the same target as in Figure \ref{fig:no-noise} is
considered. Results are shown in
Figure~\ref{fig:stats-freq-noise}.

\begin{figure}
\centering\includegraphics[height=7cm]{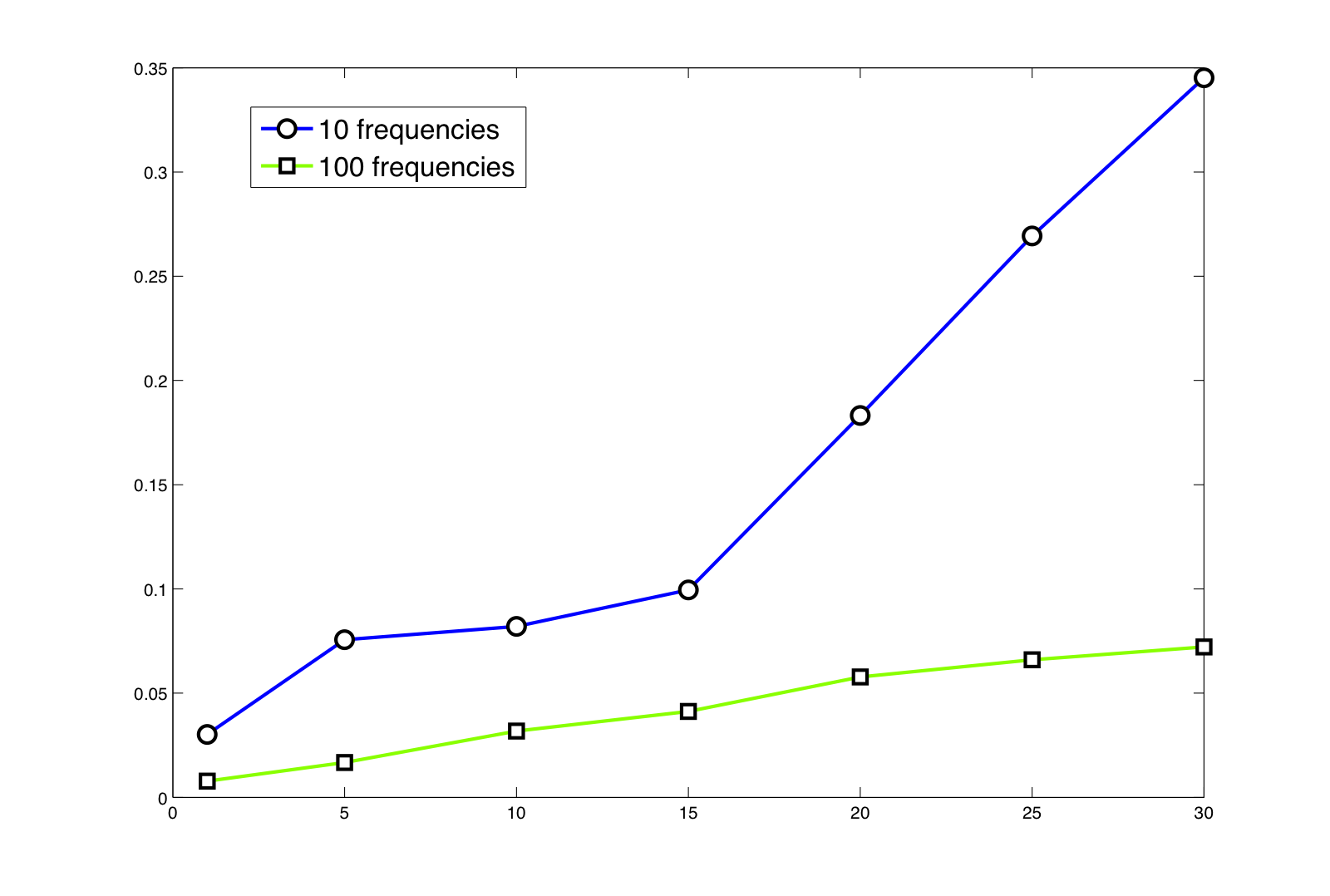}

\caption{\label{fig:stats-freq-noise}Influence of the number of
frequencies on the root mean square location error for $250$
trials. Here, the horizontal axis is for the measurement noise
level in $\%$ and the vertical axis is for the root mean square
location error.}
\end{figure}

A natural question is whether taking different values for the
frequencies plays a role. In Figure \ref{fig:noise-freq_100}, we
use the data obtained by $100$ trials for $1\%$ of noise, $64$
sensors, and a single frequency equal to $1$. Figure
\ref{fig:noise-freq_100} shows that the values of the frequencies
do not play a crucial role in the location procedure. In fact, the
location result is similar to the one in Figure
\ref{fig:noise-freq_qualitative}. However, from a practical point
of view, using simultaneously $N$ different frequencies yields a
faster robust location procedure than repeating $N$ times the data
acquisition procedure with the same frequency. In subsection
\ref{subsectcharcat}, we also identify the more fundamental role
of the values of the frequencies in the characterization
procedure.

\begin{figure}
\centering%
\includegraphics[width=7cm]{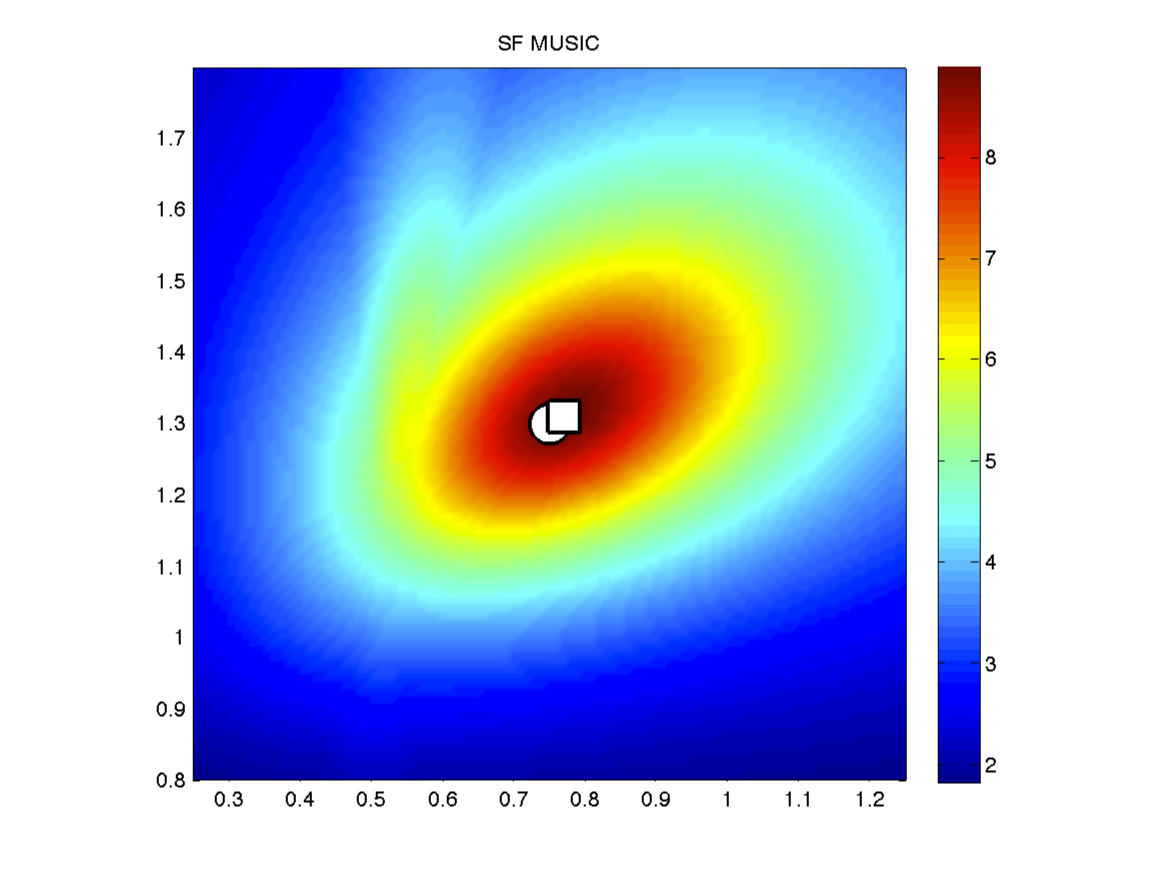}
\caption{\label{fig:noise-freq_100} Influence of the values of
used frequencies on the stability. Here, the same target as in
Figure \ref{fig:no-noise} is imaged using the data obtained by
$100$ trials with $1\%$ of noise, $64$ sensors, and frequency
equal to $1$. The disks plot the exact position, and the squares
plot the location of the maximum of the imaging functional.}
\end{figure}

The number of sensors is also crucial in the stability of the
algorithm. Figure \ref{fig:stats-sensors-noise} compares the root
mean square location error with $100$ frequencies equidistributed
from $1$ to $100$ for $64$ and $8$ sensors for different
measurement noise levels.

\begin{figure}
\centering\includegraphics[height=8cm]{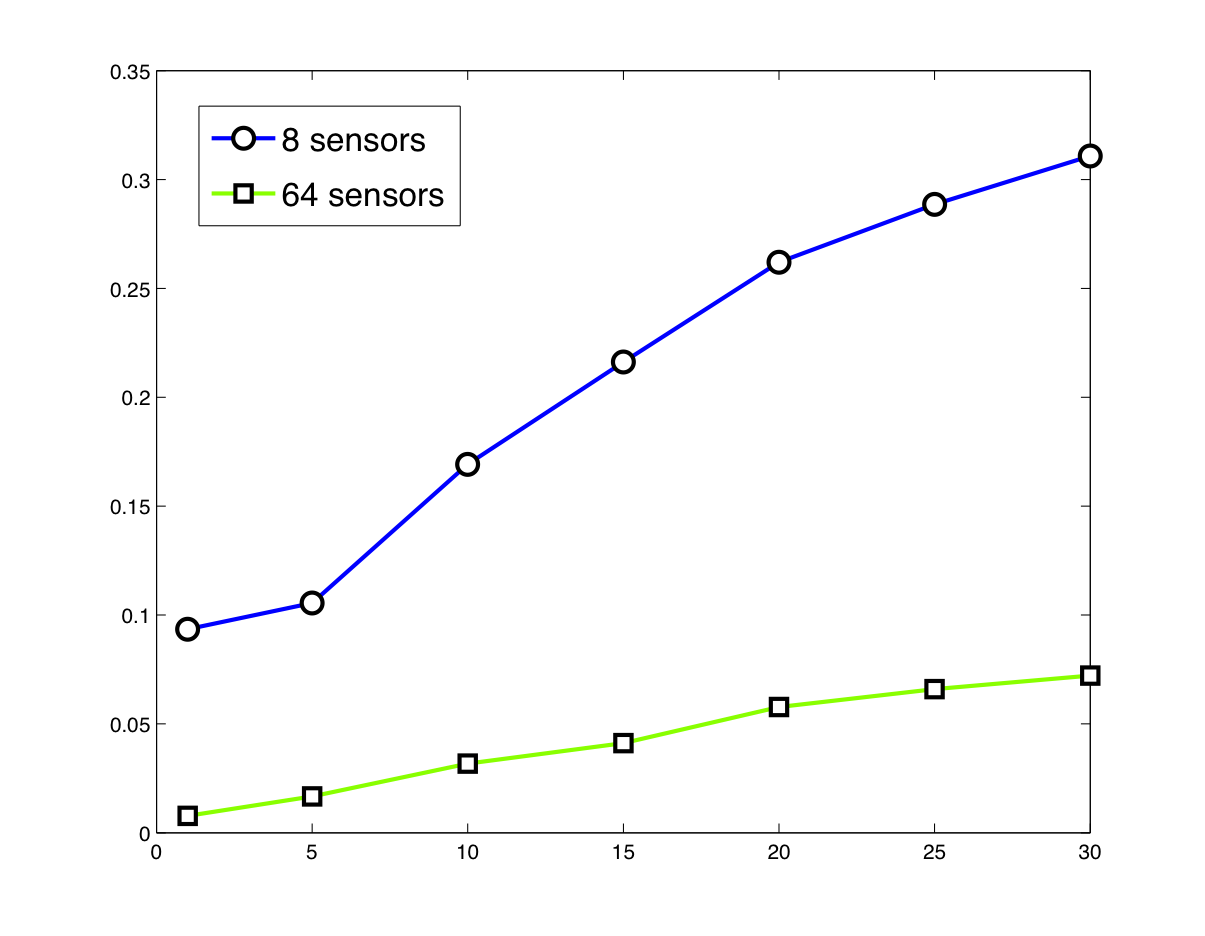}\caption{\label{fig:stats-sensors-noise}Influence
of the number of sensors on the root mean square location error
for $250$ trials. Here, the horizontal axis is for the noise level
in $\%$ and the vertical axis is for the root mean square location
error.}
\end{figure}

The same type of statistics is possible for the detection as
function of the distance between the fish from the target. In
Figure~\ref{fig:distance-noise}, we have plotted the root mean
square location errors, with $15$ frequencies equidistributed from
$1$ to $15$  and $5\%$ of noise, for disks with radius $0.05$
placed at $(t\cos(\pi/3),t\sin(\pi/3))$ for $t=1, 1.5, 2, 2.5,$
and $3$.

\begin{figure}

\centering\includegraphics[width=10.5cm]{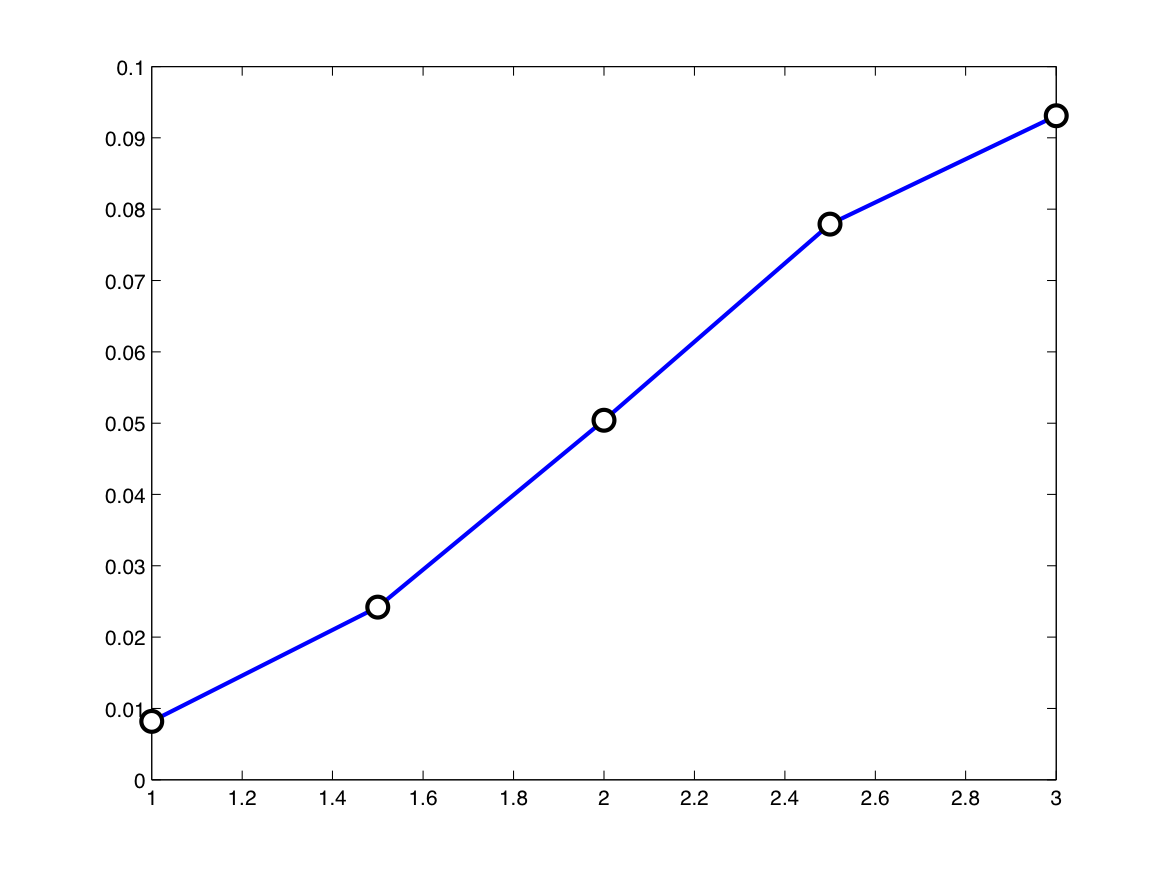}\caption{\label{fig:distance-noise}Influence
of the distance to the fish on the mean square location error for
$250$ trials. Here, the horizontal axis is for the distance to the
fish and the vertical axis is for the root mean square location
error.}

\end{figure}





\subsection{Target characterization} \label{subsectcharcat}

Once the target is located, one can use (\ref{eq:SFR-final}) to
estimate the electromagnetic parameters and the size of the
target. Assume that the target is a disk of radius $\alpha$,
placed at $z$. From (\ref{eq:SFR-final}) it follows that
$\alpha^2(k_n-1)/(k_n+1)$ can be estimated for $ 1\leq n \leq N$
from the measurement matrix $A$. Here, $k_n = k+i\varepsilon
\omega_0 n$ with $\omega_0$ being known. Let $\tau_n^{\mbox
{est}}$ be the estimated values of $\alpha^2(k_n-1)/(k_n+1)$ from
$A$. To characterize the target and approximate its size, one
minimizes the following quadratic misfit functional:
\begin{equation} \label{minimiz1}
\sum_{1 \leq n\leq N} \bigg| \frac{\alpha^2(k_n-1)}{k_n+1} -
\tau_n^{\mbox {est}} \bigg|^2,
\end{equation}
over $k, \varepsilon,$ and $\alpha$.

Table \ref{table2} gives the result of the optimization algorithm
for a disk-shaped target with center center
$(1.5\cos(\pi/3),1.5\sin(\pi/3))$ and radius $\alpha^{{\rm
true}}$. The electromagnetic parameters are $(\sigma^{{\rm true}},
\varepsilon^{{\rm true}})$. The initial guess is $\alpha^{{\rm
init}} = 0.01, \sigma^{{\rm init}} = 1, \varepsilon^{{\rm init}}
=1.$ The data is collected for $100$ frequencies  equidistributed
from $1$ to $100$. The reconstructed results are accurate.

\begin{table}[!h]
\centering
\begin{tabular}{|c|c|c||c|c|c|}
\hline $\alpha^{{\rm true}}$ & $\sigma^{{\rm true}}$ &
$\varepsilon^{{\rm true}}$ & $\alpha^{{\rm est}}$ & $\sigma^{{\rm
est}}$ & $\varepsilon^{{\rm est}}$\tabularnewline \hline 0.05 & 5
& 1 & 0.0506 & 4.9882 & 1.0004\tabularnewline \hline 0.05 & 4 & 1
& 0.0506 & 3.9993 & 0.9998\tabularnewline \hline 0.05 & 5 & 2 &
0.0506 & 4.9868 & 2.0017\tabularnewline \hline 0.06 & 5 & 1 &
0.0607 & 4.9878 & 1.0003\tabularnewline \hline 0.04 & 3 & 2 &
0.0404 & 2.9614 & 1.9806\tabularnewline \hline
\end{tabular}

\caption{Target characterization by minimizing the quadratic
misfit functional (\ref{minimiz1}) using data collected for $100$
frequencies equidistributed from $1$ to $100$. Here, ${\rm true}$:
true values, ${\rm est}$: estimated values. The initial values are
$\alpha^{{\rm init}} = 0.01, \sigma^{{\rm init}} = 1,
\varepsilon^{{\rm init}} =1.$ \label{table2}}
\end{table}

When the target is an ellipse, the measurement matrix $A$ may not
be sufficient to characterize the electromagnetic parameters and
the size of the target. At least two different positions of the
fish (or equivalently two different locations of the target in the
fish frame of reference) are needed in order to generate
non-parallel dipole directions $\nabla U/|\nabla U|$ at the
location $z$ of the target and consequently lead to the extraction
of the polarization tensor $M(k_n, D)$ of the ellipse-shaped
target $D$. Consider two target locations $z_{1}$ and $z_{2}$ in
the fish frame of reference. Multi-frequency measurements lead to
two SFR matrices, $A_{ln}^{(1)}$ and $A_{ln}^{(2)}$ with $1\leq
l\leq L$ and $1\leq n\leq N$. Define the following linear
application from the set $\mathcal{M}$ of complex symmetric
$2\times 2$ matrices to $\mathbb{C}^{2N}$
\[
F: M \mapsto\left(\begin{array}{c} \nabla
U(z_{1})^T  M  \nabla_{z}\left(\left.\frac{\partial G}{\partial\nu_{x}}\right|_{+}\right)(x_{1},z_{1})\\
\vdots\\
\nabla U(z_{1})^T M  \nabla_{z}\left(\left.\frac{\partial G}{\partial\nu_{x}}\right|_{+}\right)(x_{L},z_{1})\\
\nabla U(z_{2})^T M \nabla_{z}\left(\left.\frac{\partial G}{\partial\nu_{x}}\right|_{+}\right)(x_{1},z_{2})\\
\vdots\\
\nabla U(z_{2})^T M \nabla_{z}\left(\left.\frac{\partial
G}{\partial\nu_{x}}\right|_{+}\right)(x_{L},z_{2})
\end{array}\right)
.
\]
For a fixed $n$, we define the data
\[
b_{n}:=\left(\begin{array}{c}
A_{1n}^{(1)}\\
\vdots\\
A_{Ln}^{(1)}\\
A_{1n}^{(2)}\\
\vdots\\
A_{Ln}^{(2)}
\end{array}\right).
\]
By a least-squares method, we recover an estimation of the
polarization tensor $M(k_n,D)$:
\[
M_{n}^{{\rm est}}:=\arg\min_{M \in \mathcal{M}}\left\Vert F(M)
-b_{n}\right\Vert .
\]
Again, once $M(k_n,D)$ is estimated, a minimization approach
yields correct parameter and size values. Since for any $n$, the
eigenvectors of the matrix $M(k_n,D)$ are the ellipse axes,
denoting $\tau_{n,1}^{\mbox {est}}$ and $\tau_{n,2}^{\mbox {est}}$
the estimated complex eigenvalues of $M(k_n,D)$, one minimizes the
following quadratic misfit functional
$$
\sum_{1\leq n\leq N} \bigg| \frac{a b (k_n-1) (a+b)}{a k_n+b} -
\tau_{n,1}^{\mbox {est}} \bigg|^2 + \bigg| \frac{a b (k_n-1)
(a+b)}{b k_n+a} - \tau_{n,2}^{\mbox {est}} \bigg|^2,
$$
over $a, b, k,$ and  $\varepsilon$, in order to reconstruct the
semi-axis lengths $a$ and $b$ and the material parameters $k$ and
$\varepsilon$ of the ellipse-shaped target $D$.

If $N$ is large enough, then semi-analytical formulas to estimate
the semi-axis lengths $a,b$ and the material parameters $k,
\varepsilon$ hold. Since
$$ \tau_{N,1}^{\mbox {est}} \approx \pi a(a+b),\quad  \tau_{N,2}^{\mbox {est}} \approx \pi
b(a+b),$$ one can estimate $a$ and $b$ as follows:
\begin{equation}
a^{\rm est}  =\frac{\tau_{N,1}^{\mbox
{est}}}{\sqrt{\pi\left(\tau_{N,1}^{\mbox {est}} +\tau_{N,2}^{\mbox
{est}}\right)}},\quad  b^{\rm est}  =\frac{\tau_{N,2}^{\mbox
{est}}}{\sqrt{\pi\left(\tau_{N,1}^{\mbox {est}}+\tau_{N,2}^{\mbox
{est}}\right)}}. \label{eq:estimation_parameters}
\end{equation}
Table~\ref{tab:characteriaztion_geometric_parameters} gives
 estimations of $a$ and $b$. The target is centered at $z_{1}=1.5(\cos(\pi/3),\sin(\pi/3))$
and the fish moves in the horizontal axis so that
$z_{2}=(1.5\cos(\pi/3)-1,1.5\sin(\pi/3))$. The material parameters
of the target are $k=2$ and $\varepsilon=1$.  The data is
collected for $10$ frequencies equidistributed from $1$ to $10$.
The reconstructed results are accurate.

\begin{table}[!h]
\centering%
\begin{tabular}{|c|c||c|c|}
\hline $a^{\rm true}$ & $b^{\rm true}$ & $a^{\rm est}$ & $b^{\rm
est}$\tabularnewline \hline \hline 0.04 & 0.04 & 0.0390 &
0.0405\tabularnewline \hline 0.05 & 0.05 & 0.0497 &
0.0516\tabularnewline \hline 0.05 & 0.06 & 0.0586 &
0.0608\tabularnewline \hline \hline 0.03 & 0.06 & 0.0313 &
0.0567\tabularnewline \hline 0.06 & 0.05 & 0.0406 &
0.0487\tabularnewline \hline 0.01 & 0.03 & 0.0108 &
0.0273\tabularnewline \hline
\end{tabular}
\caption{Estimations of the semi-axis lengths of ellipse-shaped
targets using
(\ref{eq:estimation_parameters}).\label{tab:characteriaztion_geometric_parameters}}
\end{table}

Moreover, once the geometric parameters $a$ and $b$ are estimated,
it is straightforward to recover $k$ and $\varepsilon$. Introduce
\[
\begin{alignedat}{1}\mu_{n}^{(1)} & :=\frac{\tau_{N,1}^{\mbox {est}}}{\pi ab(a+b)}=\frac{k_{n}-1}{a+k_{n}b}.\end{alignedat}
\]
From
\[
k_{n} = k+ i \varepsilon n \omega_0 =
\frac{1+a\mu_{n}^{(1)}}{1-b\mu_{n}^{(1)}},
\]
one can estimate $k$ and $\varepsilon$ as the real and imaginary
parts of $k_n$. However, as shown in
Figure~\ref{fig:param_phys_est}, one can see that the error on the
real part is growing with the frequency. Therefore, in order to
increase the robustness of the material parameter estimations, one
estimate $k$ using the lowest frequencies (for example the first
three) and $\varepsilon$ using all the frequencies:
\begin{equation}
k^{\rm est} :=\frac{1}{3}\sum_{n=1}^{3}\Re\left(\frac{1+a^{\rm
est}\mu_{n}^{(1)}}{1-b^{\rm est}\mu_{n}^{(1)}}\right),\quad
\varepsilon^{\rm est}
:=\frac{1}{N}\sum_{n=1}^{N}\frac{1}{\omega_{0}n}\Im\left(\frac{1+a^{\rm
est}\mu_{n}^{(1)}}{1-b^{\rm est}\mu_{n}^{(1)}}\right).
\label{eq:phys_param_est}
\end{equation}

\begin{figure}[!h]
\centering\includegraphics[width=8cm]{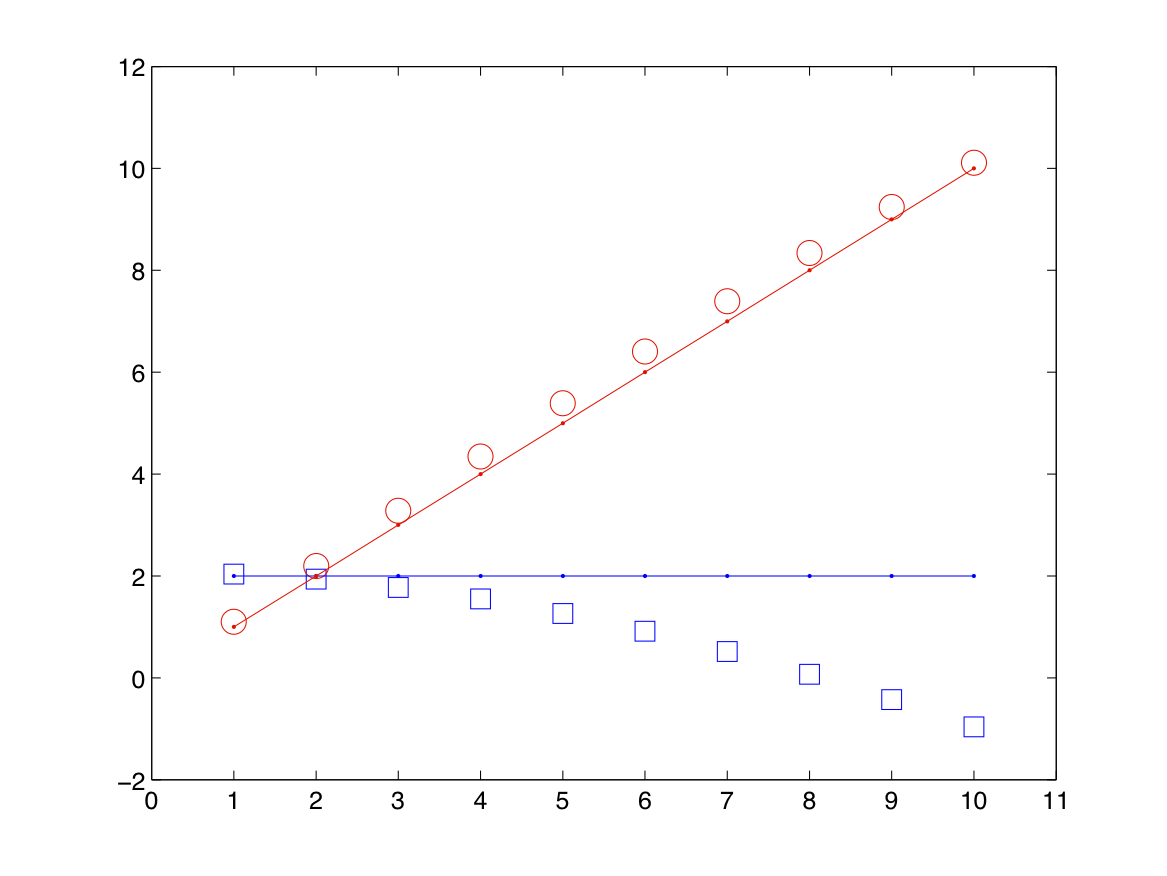}
\caption{\label{fig:param_phys_est}Real and imaginary parts
(respectively represented by squares and circles) for a
disk-shaped target as functions of the frequency. Here, the target
is with material parameters $k=2$ and $\varepsilon=1$, radius
$0,05$, and placed at $z_{1}=1.5(\cos(\pi/3),\sin(\pi/3))$ and
then at $z_{2}=(1.5\cos(\pi/3)-1,1.5\sin(\pi/3))$. The solid lines
are the theoretical values.}
\end{figure}

Table~\ref{tab:param_phys_est} gives the material estimations
using formula (\ref{eq:phys_param_est}) for a disk and an ellipse.
Once again the results are accurate.

\begin{table}[!h]
\centering%
\begin{tabular}{|c||c|c||c|c|}
\cline{2-5} \multicolumn{1}{c|}{} & $k^{\rm true}$ &
$\varepsilon^{\rm true}$ & $k^{\rm est}$ & $\varepsilon^{\rm
est}$\tabularnewline \hline
 & 2 & 1 & 1.9167 & 1.0661\tabularnewline
\cline{2-5} disk & 3 & 2 & 2.8481 & 2.0516\tabularnewline
\cline{2-5}
 & 5 & 1 & 5.8884 & 1.4668\tabularnewline
\hline \hline
 & 2 & 1 & 1.7943 & 1.0473\tabularnewline
\cline{2-5} ellipse & 3 & 2 & 2.7208 & 2.0415\tabularnewline
\cline{2-5}
 & 5 & 1 & 6.0886 & 1.5828\tabularnewline
\hline
\end{tabular}

\caption{\label{tab:param_phys_est}Estimations of the material
parameters based on formula (\ref{eq:phys_param_est}). The disk
has radius $0,05$ and the ellipse has semi-axis lengths $0,025$
and $0,1$ and orientation angle $\pi/3$. Both targets are placed
at $z_{1}=1.5(\cos(\pi/3),\sin(\pi/3))$ and then at
$z_{2}=(1.5\cos(\pi/3)-1,1.5\sin(\pi/3))$, and are illuminated
with $10$ frequencies equidistributed from $1$ to $10$.}
\end{table}

\section{Conclusion}

In this paper, we have proposed a complex conductivity model
problem for the quantitative analysis of active electro-location
in weakly electric fish. We have rigorously derived the boundary
conditions to be used. We have proposed a non-iterative location
search algorithm based on multi-frequency measurements.  We have
presented some numerical results which are promising. We have seen
that increasing the number of frequencies (with not necessary
different values) improves the stability. In fact, using
multi-frequency measurements increases the signal-to-noise ratio.
On the other hand, using different frequencies yields a faster
robust location algorithm than repeating the data acquisition
procedure with the same frequency. We have also proposed a
procedure to reconstruct the electromagnetic parameters and the
size of disk- and ellipse-shaped targets. This has been possible
only because of multi-frequency measurements corresponding here to
different frequency values. The use of multi-frequency
measurements is fundamental in the characterization procedure. It
has been known that polarization tensor for real conductivities
cannot separate the size from material properties of the target
\cite{ammari2007polarization}. For arbitrary-shaped targets, many
important questions remain. In particular, it would be interesting
to know how much parameter and size information one can extract
from its polarization tensors for different complex
conductivities. It is also worth mentioning that limiting our
asymptotic expansions with respect to the target size to the
first-order term (the dipole approximation) does not give us the
shape of the target. Hence, in a forthcoming work we will
investigate how much information can be acquired in the near field
by approaching the fish next to the target and developing the
asymptotic expansions with high-order generalized polarization
tensors \cite{ammari2004reconstruction}. We will also investigate
the stability of the proposed algorithm with respect to random
fluctuations in the background permittivity and propose an
original cross-correlation technique in order to correct for the
effect of random heterogeneities on target location.

\bibliographystyle{plain} \bibliographystyle{plain} \bibliographystyle{plain}
\bibliography{biblio}

\end{document}